\definecolor{imayou}{RGB}{154, 154, 235}
\definecolor{usuai}{RGB}{9, 150, 126}
\definecolor{persred}{RGB}{154,63,63}
\definecolor{sand}{RGB}{201, 177, 60}
\pgfplotsset{compat=1.18}
\title[Geometry of wave damping on the torus]{Geometry of wave damping on the torus}
\author{Kiril Datchev, Perry Kleinhenz, and Antoine Prouff}
\date{}							% Activate to display a given date or no date
\theoremstyle{definition}
\theoremstyle{theorem}
\newtheorem{theorem}{Theorem}[section]
\newtheorem{lemma}[theorem]{Lemma}
\newtheorem{corollary}[theorem]{Corollary}
\newtheorem{proposition}[theorem]{Proposition}
\theoremstyle{definition}
\newtheorem{definition}[theorem]{Definition}
\newtheorem{remark}[theorem]{Remark}
\newtheorem{example}[theorem]{Example}
\numberwithin{equation}{section}
\newcommand{\Rb}{\mathbb{R}}
\newcommand{\Zb}{\mathbb{Z}}
\newcommand{\Dc}{\mathcal{D}}
\newcommand{\Tb}{\mathbb{T}}
\newcommand{\ra}{\rightarrow}
\renewcommand{\>}{\right\rangle}
\newcommand{\e}{\varepsilon}
\renewcommand{\d}{\delta}
\newcommand{\nm}[1]{\left| \left| #1 \right| \right|}
\newcommand{\lp}[2]{ \nm{#1}_{L^{#2}}}
\newcommand{\hp}[2]{\nm{#1}_{H^{#2}}}
\newcommand{\ltwo}[1]{\lp{#1}{2}}
\newcommand{\p}{\partial}
\newcommand{\supp}{\text{supp }}
\newcommand{\T}{\mathbb{T}}
\newcommand{\Lc}{\mathcal{L}}
\newcommand{\ti}{\widetilde}
\newcommand{\Sb}{\mathbb{S}}
\renewcommand{\Re}{\text{Re }}
\newcommand{\pl}{\rho(\lambda)}
\newcommand{\dist}{\text{dist}}
\newcommand{\Gc}{\mathcal{G}}
\DeclareMathOperator{\Span}{span}
\begin{document}
    \begin{abstract}
        Energy decay rates of damped waves on the torus 
        depend on the behavior of the damping near the undamped region and on the geometry of the damped set. In this paper we refine these geometric considerations, by introducing the concept of 
        order of a glancing undamped point, 
        and estimating decay rates in terms of this order. 
        The proof is based on generalizing an averaging argument due to Sun.
        We also show that damping sets which attain these improvements are generic among 
        polygons and  
        smooth curves. 
    \end{abstract}
	\maketitle
\section{Introduction}

We explore how  decay rates for the damped wave equation on $\T^2 :=\Rb^2/\Zb^2$ depend on the shape of the damping set $\omega$ near points where undamped geodesics touch $\p \omega$. We first consider three examples in Figure \ref{f:1ex}.

    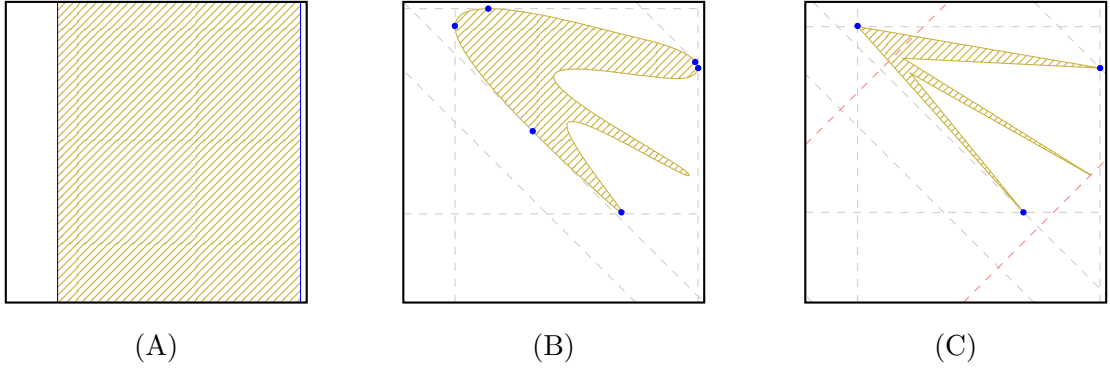
\begin{figure}[ht]
    \begin{tikzpicture}
        \draw[sand, pattern=north east lines, pattern color=sand] 
        (0.688,0) -- (3.918,0) -- (3.918,4) -- (0.688,4) -- cycle;
        \draw[blue] (.688,0) -- (0.688,4);\draw[blue] (3.918,0) -- (3.918,4);
       \draw[thick] (0,0) rectangle (4,4);\node at (2,-.6) {(A)};
    \end{tikzpicture}
        \hspace{1cm}     \begin{tikzpicture} 
        \draw[sand, pattern=north east lines, pattern color=sand]
        plot [smooth cycle, tension=0.8] coordinates {
          (2.86,1.22) (2.2,2.4) (3.8,1.7) (2,3) (3.8,3) (3.2,3.5) (.69,3.7)  
        }; 
       % \draw (2.92,1.18) -- (1.4,3.05) -- (3.795,1.695) -- (1.3,3.25)-- (3.917,3.12) --  (.7,3.67) -- cycle;
    \draw[dashed, gray!40] (0,4) -- (4,0);  \filldraw[blue]  (1.72,2.28) circle (1pt);
    \draw[dashed, gray!40] (3.09,4) -- (4,3.09);\draw[dashed, gray!40] (3.09,0) -- (0,3.09); 
    \draw[dashed, gray!40] (0,1.18) -- (4,1.18); \filldraw[blue]  (2.9,1.2) circle (1pt);
    \draw[dashed, gray!40] (0,3.91) -- (4,3.91);  \filldraw[blue]  (1.13,3.91) circle (1pt);
    \draw[dashed, gray!40] (.688,0) -- (.688,4); \filldraw[blue]  (.685,3.68) circle (1pt);
    \draw[dashed, gray!40] (3.918,0) -- (3.918,4); \filldraw[blue]  (3.88,3.2) circle (1pt); \filldraw[blue]  (3.92,3.12) circle (1pt);
           \draw[thick] (0,0) rectangle (4,4); 
        \node at (2,-.6) {(B)};
    \end{tikzpicture}
\hspace{1cm}
    \begin{tikzpicture} 
  \draw[sand, pattern=north east lines, pattern color=sand] (2.92,1.18) -- (1.4,3.05) -- (3.795,1.695) -- (1.3,3.25)-- (3.917,3.12) --  (.7,3.67) -- cycle;
    \draw[dashed, gray!40] (0.1,4) -- (4,0.1);\draw[dashed, gray!40] (0.1,0) -- (0,0.1); \draw[dashed, gray!40] (0,1.2) -- (4,1.2);\filldraw[blue]  (2.9,1.2) circle (1pt);
    \draw[dashed, gray!40] (3.918,0) -- (3.918,4);\draw[dashed, gray!40] (3.04,4) -- (4,3.04); \draw[dashed, gray!40] (3.04,0) -- (0,3.04);  \filldraw[blue]  (3.92,3.12) circle (1pt);
     \draw[dashed, gray!40] (.695,0) -- (.695,4); \draw[dashed, gray!40] (0,3.67) -- (4,3.67); \filldraw[blue]  (.695,3.68) circle (1pt); 
    \draw[dashed, red!50] (0,2.1) -- (1.9,4); \draw[dashed, red!50] (2.1,0) -- (4,1.9);
       \draw[thick] (0,0) rectangle (4,4);
        \node at (2,-.6) {(C)};
   \end{tikzpicture}
        \hspace{1cm}

    \caption{Three examples of damping sets $\omega$ on the torus, bounded by (A) a cylinder, (B) a curve which at the six blue points has nonzero curvature, and (C)  a polygon.  Blue indicates points  where undamped geodesics (shown in gray) touch $\partial \omega$; in (C) the red geodesic intersects $\omega$ so it is not undamped. }\label{f:1ex}
    \end{figure}
   
    Our first result says that if the damping near the blue points is approximately a power of the distance to the undamped set, then we have a quantitative improvement in the decay rate for cases (B) and (C) relative to case (A).
    \begin{theorem}\label{t:1ex}
    Consider the damped wave equation
    \begin{equation} \label{e:dampwe}
		\begin{cases}
			(\p_t^2 - \Delta + W \p_t) u =0, \quad (z,t) \in \T^2\times\Rb, \\
			(u, \p_t u)|_{t=0} = (u_0, u_1) \in H^2(\T^2) \times H^1(\T^2),
		\end{cases}
	\end{equation}
    where $W \in W^{9, \infty}(\T^2)$ is nonnegative, not identically zero, and $|\partial^\gamma W| \lesssim W^{1-\frac{|\gamma|}{4}}$ for $|\gamma| \le 2$. 
    
    Suppose that the damping set 
    \[\omega :=\{z \in \mathbb T^2 \colon W(z)>0\}\]
    is as in Figure \ref{f:1ex}, and let $d(z)=\dist(z, \T^2 \backslash \omega)$ be the distance from $z$ to the undamped region. Assume there exists $\beta\ge9$ such that,  in a neighborhood of the blue points,
			\begin{equation}
				  d(z)^{\beta} \lesssim W(z) \lesssim d(z)^{\beta}.
			\end{equation}
    Then we have polynomial energy decay:
    \begin{equation}\label{eq:stable}
    E(u,t):= \frac 12 \int_{\mathbb T^2} |\nabla u(z,t) |^2 + |\p_t u(z,t) |^2 dz \lesssim t^{-2\alpha} \left( \hp{u_0}{2}^2 + \hp{u_1}{1}^2 \right),
    \end{equation}
    where
    \begin{equation}
    \alpha =  1 - \frac{1}{\beta +3}, \qquad\alpha = 1 - \frac{1}{\beta + \frac 12 +3} , \qquad \text{and} \qquad   \alpha = 1 -  \frac{1}{\beta + 1 +3},
    \end{equation}
    in cases (A), (B), and (C) respectively.
    \end{theorem}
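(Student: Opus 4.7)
The plan is to deduce the energy decay \eqref{eq:stable} from a semiclassical resolvent estimate via the Borichev--Tomilov theorem, and then prove the resolvent estimate by a contradiction argument using semiclassical defect measures, with the crucial input being an averaging argument in the spirit of Sun that quantifies how much damping a glancing geodesic must pick up in each geometric setting.

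First I would reduce \eqref{eq:stable} to showing that, for $h = 1/|\lambda|$ small,
\begin{equation}
\|u\|_{L^2(\T^2)} \lesssim h^{-k}\,\bigl\|(h^2\Del - 1 - ihW)u\bigr\|_{L^2(\T^2)},
\end{equation}
with $k = \beta+3$ in case (A), $k = \beta + \tfrac 72$ in case (B), and $k = \beta+4$ in case (C); the relation $\alpha = 1 - 1/k$ comes from Borichev--Tomilov together with the boundary-to-interior regularity shift. Next I would run a contradiction argument: if the estimate failed, there would be sequences $h_j \to 0$ and $u_j$ with $\|u_j\|=1$ and $\|(h_j^2\Del - 1 - ih_j W)u_j\|_{L^2} = o(h_j^k)$. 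Extract a semiclassical measure $\mu$ on $T^*\T^2$ supported on $\{|\xi|=1\}$, invariant under the geodesic flow $\vphi_t$, and annihilated in a neighborhood of $\omega$. By flow invariance and standard propagation, $\mu$ must concentrate on entire undamped geodesics; the geometry in Figure \ref{f:1ex} then forces $\supp\mu$ to lie on the gray geodesics that touch $\p\omega$ only at the blue glancing points.

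The heart of the proof is the quantitative absorption step, which generalizes Sun's averaging argument. Given a small $T$-tube around an undamped geodesic $\gamma_0$, one averages $W$ along the flow, obtaining a lower bound of the form
\begin{equation}
\langle W u_j, u_j\rangle \gtrsim \int_{T^*\T^2}\overline W_T\, d\mu_j + o(1),\qquad \overline W_T := \frac 1 T \int_0^T W\circ\vphi_t\, dt,
\end{equation}
where $\mu_j$ is the Wigner distribution of $u_j$. The scaling of $\overline W_T$ near $\gamma_0$ depends on the shape of $\p\omega$ at the tangency: (i) in case (A) the boundary is straight so nearby parallel geodesics stay at constant distance $\eta$ from $\omega$ and $\overline W_T \sim \eta^\beta$; (ii) in case (B) nonzero curvature at the blue point forces a nearby geodesic to traverse the boundary layer on an arc of length $\sim \sqrt\eta$, producing the gain $\int_{|s|\lesssim \sqrt\eta} s^{2\beta}\, ds \sim \eta^{\beta+1/2}$ after rescaling, the extra $\eta^{1/2}$ explaining the $+\tfrac 12$ in $k$; (iii) in case (C) the corner at a blue vertex makes nearby geodesics cross a full $O(1)$ chord of the damping region, yielding the stronger gain $\sim \eta^{\beta+1}$ and the $+1$ improvement in $k$. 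Combining the resulting lower bound with the hypothesized smallness of $\|(h_j^2\Del - 1 - ih_j W)u_j\|$ forces $\mu = 0$, contradicting $\|u_j\|=1$.

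The main obstacle is Step~4: choosing the averaging time $T = T(h)$ and the cutoff scale $\eta = \eta(h)$ simultaneously so that the residual error from the non-sharp symbol calculus (controlled by the hypothesis $|\p^\gamma W|\lesssim W^{1-|\gamma|/4}$ and the $W^{9,\infty}$ regularity, which pin down $\beta\ge 9$) stays below the gain from $\overline W_T$. This balance is where the precise exponents $\beta+3$, $\beta+7/2$, $\beta+4$ are forced, and where the geometry of the blue points enters most directly; cases (B) and (C) additionally require a local normal-form analysis of $\p\omega$ near each blue point (second-order Taylor for the curved case, polar-type coordinates adapted to the vertex for the polygon) in order to justify the model computations (ii)--(iii) uniformly in the glancing parameter. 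Once this is done, assembling the pieces reproduces the three decay rates claimed.
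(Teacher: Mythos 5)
Your geometric heart is right: the computation that a line at transverse distance $\eta$ from a glancing point picks up averaged damping $\sim \eta^{\beta+1/2}$ at a curvature point and $\sim\eta^{\beta+1}$ at a vertex is exactly the content of the paper's Lemma \ref{l:WlocAvg} and Proposition \ref{prop:order} (with order $2$ and $1$ respectively; note the chord at a vertex has length $O(\eta)$, not $O(1)$, though your final exponent is still correct). But the machinery you wrap around it has a genuine gap, in two places. First, the reduction is quantitatively misstated: by \cite[Proposition 2.4]{AL14} the rate $\alpha = 1-\frac{1}{\beta+3}$ is equivalent to $\|(-\Delta + i\lambda W - \lambda^2)^{-1}\| \lesssim \lambda^{\frac{1-\alpha}{\alpha}} = \lambda^{\frac{1}{\beta+2}}$, i.e.\ $\|u\| \lesssim h^{-2-\frac{1}{\beta+2}}\|(h^2\Delta - 1 - ihW)u\|$, not $h^{-(\beta+3)}\|\cdot\|$. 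Since the entire content of the theorem is the precise exponent, proving the (much weaker) bound you state would not give the claimed decay rate.

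Second, and more fundamentally, the defect-measure contradiction scheme as described cannot close. The $O(1)$-scale measure $\mu$ you extract is flow-invariant with $\int W\,d\mu = 0$, hence supported on undamped geodesics --- but such geodesics genuinely exist here, so there is no contradiction at that scale. The obstruction lives at a second-microlocal scale transverse to the glancing lines, and converting the absorption lower bound $\overline W_T \gtrsim \eta^{\beta+1/2}$ into the sharp exponent $\lambda^{1/(\beta'+2)}$ requires the full one-dimensional resolvent analysis of $-\partial_s^2 + i\lambda A_v(W)(s) - E$, where $A_v(W)$ is the average of $W$ over the (periodic, rational) glancing direction. This is exactly what you defer to ``Step 4'' and never carry out, and it is where all the hard analysis sits: the paper handles it by the normal-form reduction of Proposition \ref{prop:avgresolve} (replacing $W$ by $A_v(W)$ for each of the finitely many glancing directions, which is where $W \in \Dc^{9,\frac14}$ and $\beta \ge 9$ are used) followed by the 1D estimates of Proposition \ref{prop:1dresolve}, which in turn rest on the nontrivial boundary-layer analyses of \cite{DatchevKleinhenz2020} and \cite{LeautaudLerner2017}. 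Without either that normal form or an equally heavy two-microlocal measure argument in the style of Anantharaman--L\'eautaud or Sun, the proposal does not produce the exponents $\beta$, $\beta+\frac12$, $\beta+1$ in the decay rate.
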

    \begin{remark}
        Theorem~\ref{t:1ex} gives a bona fide improvement for cases (B) and (C) over case (A), since decay at rate $\alpha = 1 - \frac{1}{\beta + 3}$ is known to be sharp for damping coefficients invariant in one direction ~\cite{Kleinhenz2019}.
        Note that $\omega_A \supset \omega_B \supset \omega_C$, but the energy decay rates improve as we go from case (A) to (B) to (C). This is exactly due to the differing shapes of $\p \omega$ near trajectories that intersect $\p \omega$ but not $\omega$.
    \end{remark}
    
    Case (A) of Theorem \ref{t:1ex} is exactly \cite{DatchevKleinhenz2020}. Cases (B) and (C) of Theorem \ref{t:1ex} are consequences of the more general Theorem \ref{t:uniform} below. In the language of Theorem \ref{t:uniform}, the glancing set ${\mathcal G}$ is the set of blue points in Figure \ref{f:1ex}, and these points have order $2$ in case~(B) and order $1$ in case (C). The glancing  (i.e.\ undamped) lines through each blue point are drawn in gray; for example, of the four acute angles in Figure \ref{f:1ex}(C), three have two glancing lines each. But the last acute angle has no glancing lines; any line which is locally glancing, like the red one in the figure, eventually enters the damping set $\omega$. 

\subsection{Background}

%    We now introduce some additional context and definitions, then state our main results. %The quantity of particular interest is the energy
%	\begin{equation}
%		E(u,t) = \frac{1}{2} \int_{\Tb^2} |\nabla u(z,t) |^2 + |\p_t u(z,t) |^2 dz.
%	\end{equation}
%	It is straightforward to compute $\p_t E(u,t) = -\int_{\Tb^2} W |\p_t u|^2 dz \leq 0$, so the energy is non-increasing. It is natural then to ask for a rate $r(t) \ra 0$, as $t \ra \infty$, such that
%	\begin{equation}
%		E(u,t) \leq r(t) E(u,0), \quad \text{ for all initial data } (u_0, u_1) \in H^1 \times L^2.
%	\end{equation}

Polynomial decay rates have been studied for damping on the square in \cite{LiuRao2005}, on partially rectangular domains (including tori) in \cite{BurqHitrik2007}, and for a degenerately hyperbolic undamped set in \cite{csvw}.

In the setting of \eqref{e:dampwe}, whenever $\omega$ is nonempty, \eqref{eq:stable} holds with $\alpha = 1/2$: see  \cite[Theorem 2.3]{AL14} and~\cite{Macia2010,BurqZworski2012,BurqZworski2019}. % For earlier such results, in the setting of the square and partially rectangular domains, see respectively \cite{LiuRao2005} and \cite{BurqHitrik2007}, and for a related result in the setting of a degenerately hyperbolic undamped set, see \cite{csvw}. 
On the other hand, if some geodesics are undamped, i.e.\ do not intersect $\omega$, then \eqref{eq:stable} does not hold for any $\alpha > 1$  \cite[Theorem~2.5]{AL14}.

%    When $W \in C^0$, this holds with an exponential rate $r(t)=Ce^{-ct}$, if and only if the damping satisfies the geometric control condition (GCC)  \cite{RauchTaylor1975, Ralston1969}, that is, every geodesic eventually intersects the damping set
%    \begin{equation*}
%    \omega
%        :=\{z \in \mathbb T^2 \colon W(z)>0\} .
%    \end{equation*}
	
%	On the torus if some geodesics never intersect $\omega$ there is still energy decay, but it is polynomial. In particular, if $\omega$ is nonempty then 
%	\begin{equation}\label{eq:stable}
%		E(u,t)^{\frac{1}{2}} \leq C t^{-\alpha} \left( \hp{u_0}{2} + \hp{u_1}{1} \right), \quad \text{ for all initial data } (u_0, u_1) \in H^2 \times H^1,
%	\end{equation}
%	holds with $\alpha=\frac{1}{2}$; see \cite[Theorem 2.3]{AL14} and~\cite{Macia2010,BurqZworski2012,BurqZworski2019}. On the other hand, when some geodesics never intersect $\overline{\omega}$, there are solutions decaying no faster than $t^{-1}$ \cite[Theorem 2.5]{AL14}.
	
	Making additional assumptions on $W$ refines this rate. If $W(x,y)=(|x|-\sigma)_+^{\beta}$, $\beta>-1$, near $\{x = \sigma\}$, then \eqref{eq:stable} holds with $\alpha=1 - \frac{1}{\beta+3}$, and there are solutions decaying no faster than this rate \cite{Kleinhenz2019, DatchevKleinhenz2020, KleinhenzWang2022}. 
    That is, for $y$-invariant damping supported on a strip the polynomial growth of the damping near $\partial \omega$ determines the sharp polynomial energy decay rate of solutions. 
    
    When $W=d(z)^{\beta}$, and $\omega$ is a locally strictly convex set with positive curvature, the sharp energy decay rate is faster. Roughly, $\beta$ is replaced by $\beta+\frac{1}{2}$, so \eqref{eq:stable} holds with $\alpha=1 - \frac{1}{\beta+\frac{1}{2}+3}$, and there are solutions decaying no faster than this rate \cite{Sun23}. Recently, this improvement to the energy decay rate was generalized to damping growing polynomial-logarithmically and extended to $\omega$ equal to a super-ellipse $\{ |\frac{x}{a}|^m + |\frac{y}{b}|^n < 1 \}$, which has zero curvature at some points \cite{Kleinhenz2025}.

	In this paper, we further relax and generalize the geometric assumptions on $\omega$, and sometimes obtain an even stronger improvement over the $y$-invariant decay rate.

\subsection{Definitions, notation and main results} 
    Let $W \in C^0(\T^2)$, $W \ge 0$, $W \not \equiv 0$. 
    
	For $z\in \T^2$ and $v \in \Sb^1$, the line starting from $z$ in the direction $v$ is
    \begin{equation*} \label{e:def-geodesic}
    L=L_{z,v}  := \{z+tv \in \T^2 \colon t \in \Rb\}.
    \end{equation*}

\begin{definition}[Glancing lines] \label{def:glancing-line}
    A line $L = L_{z,v} $ is \textit{glancing} if $L  \cap \omega = \varnothing$ and $L  \cap \partial \omega \ne \varnothing$. Such a line is:
    \begin{enumerate}
        \item a \textit{one-sided glancing line} if there are $\varepsilon_0>0$ and $w \in \mathbb S^1$ such that $v \cdot w =0$ and $L_{z+s w,v}\cap \omega = \varnothing$ for $s \in (0,\varepsilon_0)$;
        \item a \textit{two-sided glancing line} otherwise.
    \end{enumerate}
    We denote by $\mathcal{L}_1(v)$ and $\mathcal{L}_2(v)$ the sets of {one-sided} and {two-sided} glancing lines of direction $v$ respectively, and we define
    \begin{equation*}
    \mathcal{L}_1 := \bigcup_{v \in \Sb^1} \mathcal{L}_1(v)
        \qquad \textrm{and} \qquad
    \mathcal{L}_2 := \bigcup_{v \in \Sb^1} \mathcal{L}_2(v) .
    \end{equation*}
    
    The set of \textit{glancing directions} is 
    \[\mathcal V = \{v \in \mathbb S^1 \colon \text{there exists } p \in \mathbb T^2 \text{ such that } L_{p,v}  \text{ is glancing}\}.\]

\end{definition}

    %More precisely, if $L_{p,v}$ is glancing, and if $\omega$ contains a disk of diameter $\varepsilon$, then [[Make this into a Lemma and include the proof]]
    % \begin{equation}\label{e:veps}
    %     v \text{ is a multiple of } (v_1,v_2) \in \mathbb Z^2 \text{ such that }  v_1^2 + v_2^2 \le 1/\varepsilon^2.
    % \end{equation} 

We say that a direction $v \in \Sb^1$ is \textit{rational} if $cv \in \Zb^2$ for some $c > 0$, and irrational otherwise. Recall that geodesics with irrational direction are dense in $\Tb^2$. 
In particular, all glancing directions are rational. Furthermore, as the below lemma shows, there are only finitely many glancing directions. 
    
\begin{lemma} \label{l:finite-trapped}
    Assume $\omega$ contains an open ball of radius $\varepsilon > 0$. Then there exist at most $1/\varepsilon^2$ (rational) directions $v \in \Sb^1$ such that $L_{z, v} \cap \omega = \varnothing$ for some $z \in \Tb^2$.
\end{lemma}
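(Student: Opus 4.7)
The plan is to combine a density argument, a geometric constraint from the ball, and a lattice-point count.

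First, any glancing direction $v \in \Sb^1$ must be rational. If $cv \notin \Zb^2$ for every $c > 0$, then the components of $v$ are rationally independent, and the orbit $L_{z,v}$ is dense in $\T^2$ by Weyl equidistribution. Since $\omega \supset B(z_0, \varepsilon) \ne \varnothing$ is open, density forces $L_{z,v} \cap \omega \ne \varnothing$, contradicting the glancing hypothesis. Thus $v = (p,q)/\sqrt{p^2+q^2}$ for the unique primitive $(p,q) \in \Zb^2$ with $\gcd(p,q) = 1$ and $v$ a positive scalar multiple of $(p,q)$.

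Fix such a rational $v$ and let $w \in \Sb^1$ be orthogonal to $v$. The line $L_{z,v}$ is a closed geodesic of length $\sqrt{p^2+q^2}$, and its parallel translates $\{L_{z+sw, v} : s \in \Rb\}$ foliate $\T^2$ with transversal period $T_v = 1/\sqrt{p^2+q^2}$ (since the quotient torus has area $1$). Project the open ball $B(z_0, \varepsilon)$ onto the transversal line $\Rb w$: the projection is an open interval of length $2\varepsilon$ centered at $z_0 \cdot w$, which descends to an open sub-arc of the transversal circle $\Rb/T_v\Zb$. A parallel translate misses $B(z_0, \varepsilon)$ if and only if its transversal coordinate modulo $T_v$ lies outside this arc, and at least one such coordinate exists precisely when the arc does not fill the whole transversal circle---equivalent to $T_v \geq 2\varepsilon$, i.e.\ $p^2+q^2 \leq 1/(4\varepsilon^2)$.

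The map $v \mapsto (p,q)$ then embeds the glancing directions into the set of primitive lattice vectors in the closed disk of radius $R := 1/(2\varepsilon)$. To count these, I would partition them into $\Zb/4$-orbits under the rotation $(p,q) \mapsto (q,-p)$: each orbit has exactly four elements and a unique representative in the quadrant $Q := \{p \geq 1,\ q \geq 0\}$, so the total count equals $4 \cdot \#\{(p,q) \in Q \cap \Zb^2 : p^2+q^2 \leq R^2\}$. A direct lattice estimate---associating the unit square $[p-1,p] \times [q,q+1]$ to each lattice point and comparing their disjoint union with the bounded region $\{0 \leq x \leq R,\ 0 \leq y \leq \sqrt{R^2-x^2}+1\}$---bounds this last quantity by $R^2$. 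The final bound is therefore $4R^2 = 1/\varepsilon^2$, attained at $\varepsilon = 1/2$ where the glancing set is exactly $\{\pm(1,0),\pm(0,1)\}$.

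The main subtlety lies in the boundary case $T_v = 2\varepsilon$ of the transversal projection step: there, the open projected arc misses exactly one point of the transversal circle, and the unique parallel translate whose coordinate hits that point is tangent to $\overline{B(z_0,\varepsilon)}$ but still disjoint from the \emph{open} ball $B(z_0,\varepsilon)$. The openness of $B$ is therefore essential, and is precisely what yields the non-strict inequality $T_v \geq 2\varepsilon$ (rather than a strict one) as the correct size constraint.
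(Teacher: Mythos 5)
Your argument follows the same route as the paper's: irrational directions are excluded by density; a rational direction $v=(p,q)/\sqrt{p^2+q^2}$ with $\gcd(p,q)=1$ has transversal period $1/\sqrt{p^2+q^2}$ (the paper derives this via B\'ezout coefficients, you via the unit area of the torus --- the same fact); and hence a direction admitting a geodesic disjoint from an $\varepsilon$-ball must satisfy $p^2+q^2\le 1/(4\varepsilon^2)$. You are in fact more explicit than the paper about the boundary case $T_v=2\varepsilon$ and about the final count, which the paper dispatches with ``the result follows.''

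The one step that does not quite close is that final lattice count. Writing $R=1/(2\varepsilon)$ and $N(R)=\#\{(p,q)\in\Zb^2: p\ge1,\ q\ge0,\ p^2+q^2\le R^2\}$, your unit-square comparison shows only that $N(R)$ is at most the area of $\{0\le x\le R,\ 0\le y\le\sqrt{R^2-x^2}+1\}$, which equals $\pi R^2/4+R$. This is $\le R^2$ only when $R\ge 4/(4-\pi)\approx 4.66$, i.e.\ $\varepsilon\lesssim 0.107$. For larger $\varepsilon$ the area comparison yields something weaker than $R^2$: at $\varepsilon=1/2$ it gives $4(\pi/4+1)=\pi+4\approx 7.14$ rather than the required $4$, even though you correctly observe that the true count there is exactly $4$. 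The inequality $N(R)\le R^2$ is nevertheless true for every $R$ (with equality at $R=1,\sqrt2,\sqrt5$), so the gap is repaired by a direct enumeration of the finitely many cases $R<4.66$; but as written the area argument alone does not deliver the stated constant $1/\varepsilon^2$.
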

See Appendix~\ref{app} for a proof and Figure \ref{f:disk} for an example.

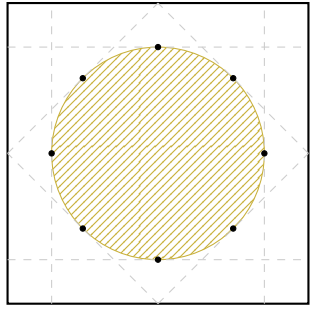
\begin{figure}[ht]
\begin{tikzpicture} 
   \draw[thick] (0,0) rectangle (4,4);
  \draw[sand, pattern=north east lines, pattern color=sand] 
    (2,2) circle (1.414);
    \draw[dashed, gray!40] (2,0) -- (0,2); \filldraw (1,1) circle (1pt); 
    \draw[dashed, gray!40] (4,2) -- (2,4); \filldraw (3,3) circle (1pt); 
    \draw[dashed, gray!40] (0,2) -- (2,4); \filldraw (1,3) circle (1pt); 
    \draw[dashed, gray!40] (2,0) -- (4,2); \filldraw (3,1) circle (1pt);    
    \draw[dashed, gray!40] (0,3.4142) -- (4,3.4142); \filldraw (2,3.4142) circle (1pt); 
    \draw[dashed, gray!40] (3.4142,0) -- (3.4142,4); \filldraw (3.4142,2) circle (1pt); 
    \draw[dashed, gray!40] (0,.5858) -- (4,0.5858);  \filldraw (2,0.5858) circle (1pt); 
    \draw[dashed, gray!40] (.5858,0) -- (0.5858,4);  \filldraw (0.5858,2) circle (1pt); 
\end{tikzpicture}
    \caption{The disk of diameter $1/\sqrt2$ has eight glancing directions (one for each eighth root of unity) and six glancing lines. The horizontal and vertical glancing lines are one-sided, and the diagonal ones are two-sided. If the damping set is a superset of this disk, then its glancing directions are a subset of these eight. If the damping set is a suitable subset of this disk, then the glancing directions are still  the same; see Figure \ref{f:disk2} for  examples.
    }\label{f:disk}
\end{figure}

%[A: this is a local version of the definition of $1$/two-sided glancing lines]
\begin{definition}[Glancing points] \label{def:glancing-pt}
    Let $L$ be a glancing line with direction $v$ and let $z \in L \cap \partial \omega$. We say that the point $z$ is:
    \begin{enumerate}
        \item a \textit{one-sided glancing point} (relative to the direction $v$) if there exist a neighborhood $U \subset \Tb^2$ of $z$ and $w \in \mathbb S^1$ such that $v \cdot w =0$ and $L_{z+s w,v}\cap U \cap \omega = \varnothing$ for any $s > 0$;
        \item a \textit{two-sided glancing point} (relative to the direction $v$) otherwise.
    \end{enumerate}
\end{definition}

    We use the following notation for sets of glancing points:     
    \[
    {\mathcal G} = \bigcup_{v \in \mathcal V} {\mathcal G}(v), \qquad  {\mathcal G}(v) = \{ z \in \partial \omega \colon z \in L \text{ for some glancing line } L \text{ with direction }v\},
    \]
Similarly we write for the sets of one-sided and two-sided glancing points
    \[
    {\mathcal G}_1 = \bigcup_{v \in \mathcal V} {\mathcal G}_1(v), \qquad {\mathcal G}_2 = \bigcup_{v \in \mathcal V} {\mathcal G}_2(v).
    \]
%\Ped{P:I think changing the definitions in this way has some advantages. However, if we make this change then we need to change hypotheses like ``$\Gc_1=\emptyset$, e.g. in Theorem \ref{thm:suff} because there are some damping which have no locally two-sided glancing points, but only two-sided glancing lines and so have the faster energy decay rate.}

%\Ped{[A: yes I agree!]}

%\Ped{P: Maybe it's enough to just say $\Lc_1=\emptyset$ instead of $\Gc_1=\emptyset$ in those places.}

%\Ped{A: Yes, indeed. Otherwise, there is the issue that you pointed out, that a 2-sided glancing line can have only locally 1-sided glancing points...}

\begin{remark}
Notice that a {two-sided} glancing line may contain only {one-sided} glancing points; this is the case for the two-sided glancing lines in Figure \ref{f:disk}.
\end{remark}
    
    The next definition makes precise and generalizes the behavior of $\partial \omega$ near the marked points of Figures \ref{f:1ex} and \ref{f:disk}. Figure~\ref{f:d-order} has further examples illustrating the definition.
\begin{definition}\label{def:order}[Order of a glancing point]
Let $L$ be a glancing line, let $z \in \partial \omega \cap L$, and let $\eta >0$. We say that $z$ has \textit{order} $\eta$, if there exists an affine coordinate chart $(\psi,U)$ about $z$ such that
\begin{equation}\label{e:psil}
\psi(z) = (0,0)
    \qquad \textrm{and} \qquad
\psi(L \cap U) = \{(x,y) \in \psi(U) \colon  x=0 \} ,
\end{equation}
and there exist constants $C_{out} > C_{in} >1$ such that the following holds. Defining the sets
\begin{equation*}
\Omega_\eta
    := \{(x,y) \in \psi(U) : |y|^{\eta} \leq C_{out} |x| \} ,
     \qquad
\mathcal{F}_\eta
    := \{(x,y) \in \psi(U) : C_{in}^{-1} |y|^{\eta} \leq |x| \le C_{in} |y|^{\eta} \} ,
\end{equation*}
we have
\begin{itemize}
\item in the case $z \in \Gc_1$,
\begin{equation} \label{e:order-1-sided}
\mathcal{F}_\eta \cap \{x \ge 0, y \ge 0\}
    \subset \psi(\omega \cap U)
    \subset \Omega_\eta \cap \{ x \ge 0 \} ;
\end{equation}
\item in the case $z \in \Gc_2$,
\begin{equation} \label{e:order-2-sided}
\mathcal{F}_\eta \cap \{y \ge 0\}
    \subset \psi(\omega \cap U)
    \subset \Omega_\eta
        \qquad \textrm{or} \qquad
\mathcal{F}_\eta \cap \{xy \ge 0\}
    \subset \psi(\omega \cap U)
    \subset \Omega_\eta .
\end{equation}
\end{itemize}
\end{definition}

\begin{figure}[h]
       \begin{tikzpicture} 
  \draw[sand, pattern=north east lines,  pattern color=sand]plot [domain=1:0] (\x,{1*\x^.5}) --  plot [domain=0:1] (\x,-{1*\x^.5});

  \filldraw (0,0) circle (1pt);

  \draw[gray!90] (-1.3,0) -- (1.3,0);
  \draw[gray!90] (0,-1.3) -- (0,1.3);
   \end{tikzpicture}
\hspace{1cm}       
\begin{tikzpicture} 
  \draw[sand, pattern=north east lines,  pattern color=sand]plot [domain=1:0](\x,{0.5*abs(\x)^.5}) --  plot [domain=0:1] (\x,{1*abs(\x)^.5});

  \filldraw (0,0) circle (1pt);

  \draw[gray!90] (-1.3,0) -- (1.3,0);
  \draw[gray!90] (0,-1.3) -- (0,1.3);
   \end{tikzpicture}
\hspace{1cm}
\begin{tikzpicture} 
  \draw[sand, pattern=north east lines,  pattern color=sand]plot [domain=1:0](\x,{0.5*abs(\x)^.5}) --  plot [domain=0:1] (\x,{1*abs(\x)^.5});

  \draw[sand, pattern=north east lines,  pattern color=sand]plot [domain=-1:0](\x,{0.5*abs(\x)^.5}) --  plot [domain=0:-1] (\x,{1*abs(\x)^.5});;
  
  \filldraw (0,0) circle (1pt);

  \draw[gray!90] (-1.3,0) -- (1.3,0);
  \draw[gray!90] (0,-1.3) -- (0,1.3);
   \end{tikzpicture}
\hspace{1cm}
\begin{tikzpicture} 
  \draw[sand, pattern=north east lines,  pattern color=sand]plot [domain=1:0](\x,{0.5*abs(\x)^.5}) --  plot [domain=0:1] (\x,{1*abs(\x)^.5});

  \draw[sand, pattern=north east lines,  pattern color=sand]plot [domain=-1:0](\x,-{0.5*abs(\x)^.5}) --  plot [domain=0:-1] (\x,-{1*abs(\x)^.5});
  
  \filldraw (0,0) circle (1pt);

  \draw[gray!90] (-1.3,0) -- (1.3,0);
  \draw[gray!90] (0,-1.3) -- (0,1.3);
   \end{tikzpicture}
     \caption{ The first two are one-sided glancing points, obeying \eqref{e:order-1-sided}. The last two are two-sided glancing points, obeying respectively the first and the second of \eqref{e:order-2-sided}.}
     \label{f:d-order}
   \end{figure}
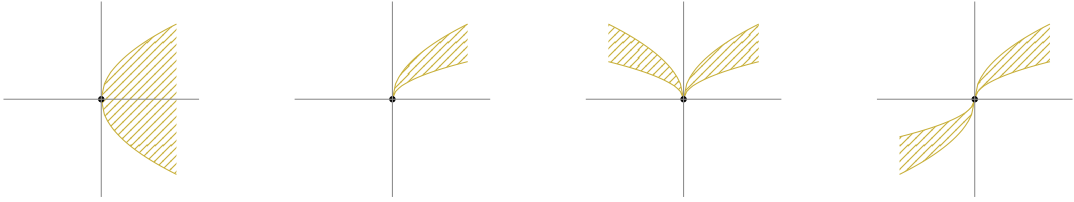

	For example, if $\partial \omega$ is a  curve with nonzero curvature at $z$, as in  Figure~\ref{f:1ex}(B) or Figure~\ref{f:disk},  then $z$ has order $2$.  If $\partial \omega$ is a polygon  with $z$ a vertex, as in Figure \ref{f:1ex}(C), then $z$ has order~$1$.  The same holds for suitable curvilinear polygons, as in Figure \ref{f:order}. Note that a single point can have different orders depending on the glancing direction; see Figure~\ref{f:order-fang}.

    Our methods also apply to infinite order glancing points, but since this case is more cumbersome and exhibits no decay improvement, we omit it for simplicity.

    \begin{remark}\label{r:finiteG}
        If every point of $\mathcal G$ has an order, then $\mathcal G$ is finite. Indeed, by Lemma \ref{l:finite-trapped} it is enough to check that $\mathcal G(v)$ is finite for each $v$. For this observe that, for each $v$, the set of glancing lines with direction $v$ is closed, and hence so is $\mathcal G(v)$. Since Definition \ref{def:order} forbids any point with an order from being an accumulation point of glancing points, it follows that the set of glancing points is finite.% \Ped{K: Rewrote this since the previous version did not seem correct.} \Ped{P: yes, the previous one did not quite work, but this one is correct.}
        %But if  $z \in \Gc_1(v)$, then any line parallel to $x=0$ in $\Rb^2$ either: intersects the set $\{(x,y):y > 0, C^{-1}_{in} y^{\eta} \leq x \leq C_{in} y^n\}$ which is contained in the damped region, does not intersect the set $\{(x,y):|y|^{\eta} \leq C_{out} x\}$ and so cannot intersect the boundary of the damped region, or is the line $x=0$. \textcolor{red}{Therefore the coordinate neighborhood $(\psi, U)$ contains no points in $\Gc_1(v)$ other than $z$. A similar argument applies when $z \in \Gc_2(v)$, and finiteness of $\mathcal G$ follows by compactness.}       
    \end{remark}
    
%Curvlinear polygon figure
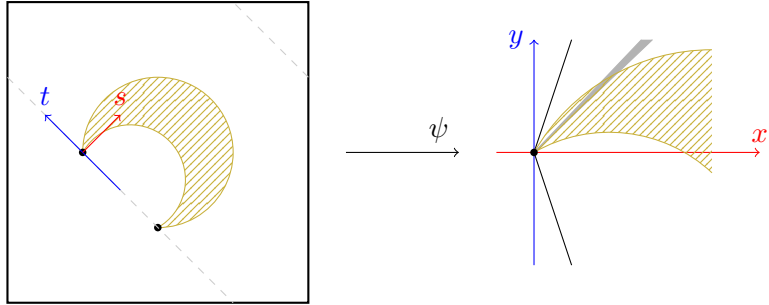
\begin{figure}[h]

\begin{tikzpicture} 
   \draw[thick] (0,0) rectangle (4,4);
      \coordinate (A) at (2,2);
  \coordinate (B) at (1.634,1.634); %(1+ 2\sqrt 3)/(1+\sqrt 3)
  \begin{scope}[overlay]
  \path [name path=A] (A) circle [radius=1];
  \path [name path=B] (B) circle [radius=.732]; % 2/(1+\sqrt 3)
  \path [name intersections={of=A and B, by={p1,p2}}];
  \end{scope}
  \draw [sand, pattern=north east lines, pattern color=sand]  let
    \p1=(A),\p2=(B),\p3=(p1),\p4=(p2),
    \n1={veclen(\x3-\x1,\y3-\y1)},
    \n2={atan2(\y3-\y1,\x3-\x1)}, \n3={atan2(\y4-\y1,\x4-\x1)},
    \n4={veclen(\x3-\x2,\y3-\y2)},
    \n5={atan2(\y3-\y2,\x3-\x2)}, \n6={atan2(\y4-\y2,\x4-\x2)} in
    ($(A)+(\n2:\n1)$) arc (\n2:\n3:\n1) arc(\n6:\n5:\n4) -- cycle;
  \foreach \n in {p1,p2}
     \fill (\n) circle [radius=.05];
        \draw[dashed, gray!40] (0,3) -- (3,0); \draw[dashed, gray!40] (4,3) -- (3,4);
    \draw[->] (4.5,2) -- (6,2) node[above left] {$\psi$};
    %https://www.desmos.com/calculator/xd1vkexuob
     
   \draw[red, ->] (6.5,2) -- (10,2) node[above] {$x$}; \draw[blue, ->] (7,0.5) -- (7,3.5) node[left] {$y$};
    \draw[black] (7.5,3.5) -- (7,2) -- (7.5,.5);
    \draw[gray!60, fill = gray!60] (8.575,3.5) -- (7,2) -- (8.429,3.5);
   \draw [sand, pattern=north east lines, pattern color=sand] (9.366,1.729)arc[start angle=46.92, end angle=120, radius=2] --  (7,2) arc [start angle=150, end angle=90, radius=2.732];
   \fill(7,2) circle [radius=.05];
   \draw[blue, ->] (1.5,1.5) -- (.5,2.5) node[above] {$t$}; \draw[red, ->] (1,2)  -- (1.5,2.5) node[above] {$s$};

\end{tikzpicture}
 
		\caption{ A point of order 1. The gray is $0 \le C_{in}^{-1}y \le x \le C_{in} y$, and the black is $|y|=C_{out}x$ with the values  $C_{in}=1.05$, $C_{out}=3$, chosen far from optimal for emphasis. In the left hand side, $t$ is the coordinate along $v$ and $s$ is the coordinate along $v^{\bot}$.
        }\label{f:order}
\end{figure}
We now state our final preliminary definition.
	\begin{definition}
		We say $W \in \Dc^{k,\frac{1}{4}}(\T^2)$ if $W \in W^{k, \infty}(\T^2)$, $W$ is nonnegative,  not identically $0$, and $|\partial^\alpha W| \lesssim W^{1-\frac{|\alpha|}{4}}$ for all multiindices $\alpha$ such that $|\alpha| \le 2$.
	\end{definition}
%Fang figure
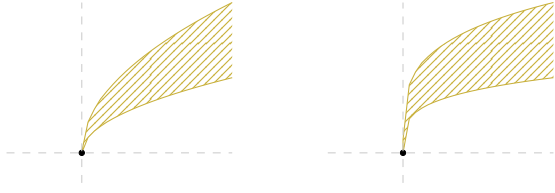
\begin{figure}[h]
    \begin{tikzpicture} 
  \draw[sand, pattern=north east lines,  pattern color=sand]plot [domain=2:0] (\x,{2*(0.5*\x)^.5}) --  plot [domain=0:2] (\x,{(0.5*\x)^.5});
  \filldraw (0,0) circle (1pt);
  \draw[dashed, gray!40] (-1,0) -- (2,0);\draw[dashed, gray!40] (0,-.4) -- (0,2);
   \end{tikzpicture}
   \hspace{1cm}
       \begin{tikzpicture} 
  \draw[sand, pattern=north east lines, pattern color=sand]plot [domain=2:0] (\x,{2*(0.5*\x)^.25})  --  plot [domain=0:2] (\x,{(0.5*\x)^.25});
  \filldraw (0,0) circle (1pt);
  \draw[dashed, gray!40] (-1,0) -- (2,0); \draw[dashed, gray!40] (0,-.4) -- (0,2);
   \end{tikzpicture}
     \caption{With respect to the horizontal direction, a point of order $1/2$ (damping region $\frac{1}{2}x^{1/2} < y < 2x^{1/2} $) and a point of order $1/4$ (damping region $\frac{1}{2} x^{1/4} < y < 2 x^{1/4}$). With respect to the vertical direction, the orders are $2$ and $4$.}
     \label{f:order-fang}
   \end{figure}
    
    Our next theorem is a simplified version of our main result, and implies cases (B) and (C) of  Theorem \ref{t:1ex}. It says that if every point in the glancing set has order $\eta$ and the damping grows like $d^{\beta}$ near $\Gc$, then the energy decay rate is that of a $y$-invariant damping growing polynomially in $d$ with power $\frac{\beta}{\min\{\eta,1\}}+\frac{1}{\eta}$.
    \begin{theorem}\label{t:uniform}
    Let $W \in \Dc^{9,\frac{1}{4}}(\T^2)$. Suppose  there exist $ \eta>0$ and $ \beta \ge 9$ such that
     every $z \in {\mathcal G}$ has order $\eta$,
    and, for $z$ in a neighborhood of ${\mathcal G}$,
    \[
      d(z)^{\beta} \lesssim W(z) \lesssim d(z)^{\beta}.
    \]
    Then \eqref{eq:stable} holds with
	\begin{equation}
		\alpha = 1 - \frac{1}{\frac{\beta}{\min\{\eta, 1\}} + \frac 1 \eta +3}.
	\end{equation} 
    \end{theorem}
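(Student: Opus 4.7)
The plan is to reduce Theorem~\ref{t:uniform} to a high-frequency semiclassical resolvent estimate and then to generalize the averaging argument of \cite{Sun23}. By the Borichev--Tomilov theorem in its now-standard use for damped waves, the decay bound \eqref{eq:stable} with $\alpha = 1 - 1/(K+3)$, where $K := \beta/\min\{\eta,1\} + 1/\eta$, is equivalent to a resolvent estimate of the form
\begin{equation*}
\|u\|_{L^2(\Tb^2)} \le C h^{-2-1/(K+2)} \bigl\|\bigl(h^2\Delta+1+ih W\bigr)u\bigr\|_{L^2(\Tb^2)}
\end{equation*}
for all sufficiently small $h>0$. I would argue by contradiction, extracting a sequence $h_n \to 0$ and $u_n \in L^2(\Tb^2)$ with $\|u_n\|_{L^2}=1$ and $\|(h_n^2\Delta+1+ih_n W)u_n\|_{L^2}=o(h_n^{2+1/(K+2)})$; pairing with $u_n$ and taking imaginary parts immediately yields $\|W^{1/2}u_n\|_{L^2}^2=o(h_n^{1+1/(K+2)})$, which already controls the bulk of $u_n$ inside $\omega$.

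By Lemma~\ref{l:finite-trapped} and Remark~\ref{r:finiteG}, both the set $\mathcal V$ of glancing directions and the set $\mathcal G$ of glancing points are finite. Standard semiclassical propagation of singularities then shows that any microlocal mass of $(u_n)$ concentrated on an irrational direction is forced to vanish, by equidistribution together with $\omega$ being open and nonempty; and the same holds for rational non-glancing directions, because the corresponding closed geodesic meets $\omega$, where $W \gtrsim 1$. The $L^2$ mass of $u_n$ must therefore accumulate on the finite union of glancing lines, and the remaining task is to show that even there the damping contributes enough to contradict $\|u_n\|_{L^2}=1$.

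The main step, generalizing \cite{Sun23}, is an averaging argument in tubes around each glancing line. Fix $z_0\in\mathcal G$ of order $\eta$, work in the chart $(x,y)$ of Definition~\ref{def:order} with $x$ transverse and $y$ along the glancing line $L$, and localize $u_n$ to a tube $\{|x|\le h_n^\rho\}$ with $\rho\in(0,1)$ to be optimized. The core geometric estimate is
\begin{equation*}
\int W(\sigma, y)\, dy \;\simeq\; \sigma^{K} \qquad \text{as } \sigma \to 0^{+},
\end{equation*}
which splits into two sub-cases matching the two regimes of $\min\{\eta,1\}$. For $\eta \ge 1$ the slice $\omega \cap \{x=\sigma\}$ has length $\simeq \sigma^{1/\eta}$ and $d(\sigma,y)\simeq \sigma$ on most of it, giving $\simeq \sigma^{\beta+1/\eta}$; for $\eta<1$ the boundary of $\omega$ is cuspy, $d$ has the anisotropic scaling $d(\sigma,y) \simeq \sigma^{1/\eta}$ throughout the slice, and integration yields $\simeq \sigma^{\beta/\eta + 1/\eta}$. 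Averaging this lower bound in $y$ along the (rational, hence periodic) glancing line via a commutator argument with a cut-off in $y$, and balancing the resulting damping contribution against the microlocal cost of the tube localization, the optimal choice $\rho = 1/(K+2)$ reproduces the claimed resolvent bound.

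The principal technical obstacle is the averaging step itself: the argument of \cite{Sun23}, designed for a smooth boundary of positive curvature (our case $\eta=2$), must be genuinely reworked both to accommodate the cuspy geometry of $\eta<1$ and to handle polygonal corners $\eta=1$ uniformly in $\eta$ within the chart of Definition~\ref{def:order}. The repeated commutator manipulations needed to convert the above geometric identity into an operator inequality on the tube are licensed by the hypothesis $\beta \ge 9$ together with $W \in \Dc^{9, 1/4}$, which supply enough derivatives of $W$ with the degenerate-elliptic control $|\p^\gamma W| \lesssim W^{1-|\gamma|/4}$.
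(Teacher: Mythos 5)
Your core geometric computation is exactly the paper's key new input: the slice estimate $\int W(\sigma,\cdot)\,dy \simeq \sigma^{K}$ with $K=\beta/\min\{\eta,1\}+1/\eta$ is Lemma \ref{l:WlocAvg} (proved via Lemmas \ref{l:distancelemma-fang} and \ref{l:change-var}), and your two sub-cases and exponents agree with it. The reduction to a resolvent estimate via Borichev--Tomilov and the exponent bookkeeping ($h^{-2-1/(K+2)}$ corresponding to $\alpha=1-1/(K+3)$) are also correct.

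However, the passage from the 2D resolvent estimate to this geometric computation has a genuine gap. Your propagation step concludes that the $L^2$ mass of $u_n$ "must accumulate on the finite union of glancing lines." This does not follow: the semiclassical defect measure is flow-invariant and gives zero mass to $\{W>0\}$, so it is supported on the union of \emph{all} undamped geodesics, which under the hypotheses of Theorem \ref{t:uniform} is typically a whole band (an invariant cylinder in phase space) of rational-direction lines avoiding $\overline{\omega}$ — see Figure \ref{f:1ex}(B), where every vertical line outside the two blue tangency lines is undamped. The glancing lines are only the boundary of that band, and propagation gives no information about mass sitting in its interior, since such geodesics never meet the damping; hence a tube localization $\{|x|\le h^{\rho}\}$ around a glancing line cannot by itself contradict $\|u_n\|_{L^2}=1$. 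What is required is a \emph{global} estimate transverse to the glancing direction: a second-microlocal normal form replacing $W$ by its average $A_v(W)$ along each glancing direction (imported in the paper as Proposition \ref{prop:avgresolve}, i.e.\ \cite[Theorem 1.12]{Kleinhenz2025}; this is where $W\in\Dc^{9,\frac14}$ is actually used), followed by a resolvent estimate for $-\partial_s^2+i\lambda A_v(W)-E$ on all of $\Sb^1$, uniform in $E\in\Rb$, for a one-dimensional damping vanishing on an interval and growing like $d(s)^{K}$ near its endpoints (Proposition \ref{prop:1dresolve}, which combines \cite{DatchevKleinhenz2020} and \cite{LeautaudLerner2017} and needs a separate treatment of large and small $E$). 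Your "balancing the damping contribution against the microlocal cost of the tube" with $\rho=1/(K+2)$ compresses both of these missing steps into one sentence; as written it neither controls the undamped interior of the band nor supplies the uniform-in-$E$ one-dimensional estimate. Once those two ingredients are supplied, your averaging estimate completes the argument exactly as in the paper's Proposition \ref{prop:order}.
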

    \begin{definition}
    Consider a damping with $W(z) \simeq d(z)^{\beta}$ near $\Gc$. If \eqref{eq:stable} holds with 
    \begin{equation}
        \alpha = 1 - \frac{1}{\beta' +3},
    \end{equation}
    then we refer to $\beta \mapsto \beta'$ as the decay improvement. Theorem \ref{t:uniform} provides a decay improvement of $\beta \mapsto \frac{\beta}{\min\{\eta, 1\}} + \frac 1 \eta$. 
    \end{definition}

    \begin{remark} \hfill 
        \begin{enumerate}
            \item When some geodesics never intersect $\overline{\omega}$, based on \cite[Theorem~1.9]{Kleinhenz2025} we anticipate that the further $\p \omega$ is from its glancing lines, i.e.\ the lower the order of glancing points, the faster the energy will decay. Theorem \ref{t:uniform} bears this out. That is, as $\eta \ra 0^+$ the energy decay rate $\alpha$  increases towards $1$. In this case, by \cite[Theorem 2.5]{AL14} we cannot have $\alpha >1$. On the other hand $\partial \omega$ becomes flat as $\eta \to \infty$, and the energy decay rate is nearly that of $(|x|-\sigma)_+^{\beta}$, namely $\alpha=1 - \frac{1}{\beta+3}$. 
            \item Because sets with nonzero curvature have order 2 at all glancing points, this theorem generalizes the energy decay rate improvement from \cite[Theorem 1.1]{Sun23}. The generalization is in two directions. First, we handle orders other than $2$, which allows us to treat polygons, curvilinear polygons,  and super-ellipses of any order. Second, we show that the behavior of $\p \omega$ and $W$ are relevant only near $\Gc$. This makes it possible to give improved energy decay rates for more general damped regions, and for damping with more general behavior near $\p \omega \cap \Gc^c$.
%            \item This result also generalizes \cite[Theorem 1.9]{Kleinhenz2025} by handling shapes more general than super-ellipses. Additionally, by allowing orders $<1$ this result gives energy decay rates for super-ellipses with exponent $<1$.

        \end{enumerate}
    \end{remark}
    
\begin{figure}[h]
    \begin{tikzpicture} 
   \draw[thick] (0,0) rectangle (4,4);
  \draw[sand, pattern=north east lines, pattern color=sand] 
    (3.225,2.707) arc [start angle=30, delta angle=345, radius=1.414] -- (3.366,2.366) arc [start angle=14, delta angle=-343, radius=1.321];
    \draw[dashed, gray!40] (2,0) -- (0,2); \filldraw (1,1) circle (1pt); 
    \draw[dashed, gray!40] (4,2) -- (2,4); \filldraw (3,3) circle (1pt); 
    \draw[dashed, gray!40] (0,2) -- (2,4); \filldraw (1,3) circle (1pt); 
    \draw[dashed, gray!40] (2,0) -- (4,2); \filldraw (3,1) circle (1pt);    
    \draw[dashed, gray!40] (0,3.4142) -- (4,3.4142); \filldraw (2,3.4142) circle (1pt); 
    \draw[dashed, gray!40] (3.4142,0) -- (3.4142,4); \filldraw (3.4142,2) circle (1pt); 
    \draw[dashed, gray!40] (0,.5858) -- (4,0.5858);  \filldraw (2,0.5858) circle (1pt); 
    \draw[dashed, gray!40] (.5858,0) -- (0.5858,4);  \filldraw (0.5858,2) circle (1pt); 

    \end{tikzpicture}
    \hspace{1cm}
  \begin{tikzpicture} 
   \draw[thick] (0,0) rectangle (4,4);
  \draw[sand, pattern=north east lines, pattern color=sand] 
    (3.4142,2) arc (225:180:1.4142) -- (3,3) arc (270:225:1.4142)-- (2,3.4142) arc (315:270:1.4142) -- (1,3) arc (360:315:1.4142) -- (0.586,2) arc(45:0:1.4142) -- (1,1) arc (90:45:1.4142) -- (2,.586) arc(135:90:1.4142) -- (3,1) arc(180:135:1.4142);
    \draw[dashed, gray!40] (2,0) -- (0,2); \filldraw (1,1) circle (1pt); 
    \draw[dashed, gray!40] (4,2) -- (2,4); \filldraw (3,3) circle (1pt); 
    \draw[dashed, gray!40] (0,2) -- (2,4); \filldraw (1,3) circle (1pt); 
    \draw[dashed, gray!40] (2,0) -- (4,2); \filldraw (3,1) circle (1pt);    
    \draw[dashed, gray!40] (0,3.4142) -- (4,3.4142); \filldraw (2,3.4142) circle (1pt); 
    \draw[dashed, gray!40] (3.4142,0) -- (3.4142,4); \filldraw (3.4142,2) circle (1pt); 
    \draw[dashed, gray!40] (0,.5858) -- (4,0.5858);  \filldraw (2,0.5858) circle (1pt); 
    \draw[dashed, gray!40] (.5858,0) -- (0.5858,4);  \filldraw (0.5858,2) circle (1pt); 
\end{tikzpicture}
\hspace{1cm}
 \begin{tikzpicture} 
   \draw[thick] (0,0) rectangle (4,4);
  \draw[sand, pattern=north east lines, pattern color=sand] 
    (3.4142,2) arc (225:180:1.4142) -- cycle; \draw[sand, pattern=north east lines, pattern color=sand]  (3,3) arc (270:225:1.4142) -- cycle; \draw[sand, pattern=north east lines, pattern color=sand]  (2,3.4142) arc (315:270:1.4142) -- cycle; \draw[sand, pattern=north east lines, pattern color=sand] (1,3) arc (360:315:1.4142) -- cycle; \draw[sand, pattern=north east lines, pattern color=sand]  (0.586,2) arc(45:0:1.4142) -- cycle; \draw[sand, pattern=north east lines, pattern color=sand]  (1,1) arc (90:45:1.4142) -- cycle; \draw[sand, pattern=north east lines, pattern color=sand]  (2,.586) arc(135:90:1.4142) -- cycle; \draw[sand, pattern=north east lines, pattern color=sand] (3,1) arc(180:135:1.4142) -- cycle;   \draw[sand, pattern=north east lines, pattern color=sand] 
    (2,2) circle (.1);
    \draw[dashed, gray!40] (2,0) -- (0,2); \filldraw (1,1) circle (1pt); 
    \draw[dashed, gray!40] (4,2) -- (2,4); \filldraw (3,3) circle (1pt); 
    \draw[dashed, gray!40] (0,2) -- (2,4); \filldraw (1,3) circle (1pt); 
    \draw[dashed, gray!40] (2,0) -- (4,2); \filldraw (3,1) circle (1pt);    
    \draw[dashed, gray!40] (0,3.4142) -- (4,3.4142); \filldraw (2,3.4142) circle (1pt); 
    \draw[dashed, gray!40] (3.4142,0) -- (3.4142,4); \filldraw (3.4142,2) circle (1pt); 
    \draw[dashed, gray!40] (0,.5858) -- (4,0.5858);  \filldraw (2,0.5858) circle (1pt); 
    \draw[dashed, gray!40] (.5858,0) -- (0.5858,4);  \filldraw (0.5858,2) circle (1pt); 
\end{tikzpicture}

    \caption{On the left, the decay improvement is $\beta \mapsto \beta + 1/2$, the same as that of the disk of Figure \ref{f:disk}, because the glancing points are the same and all have order $2$. In the middle and on the right, the decay improvement is~$\beta \mapsto \beta + 1$ because the glancing points have order $1$.}\label{f:disk2}
    \end{figure}
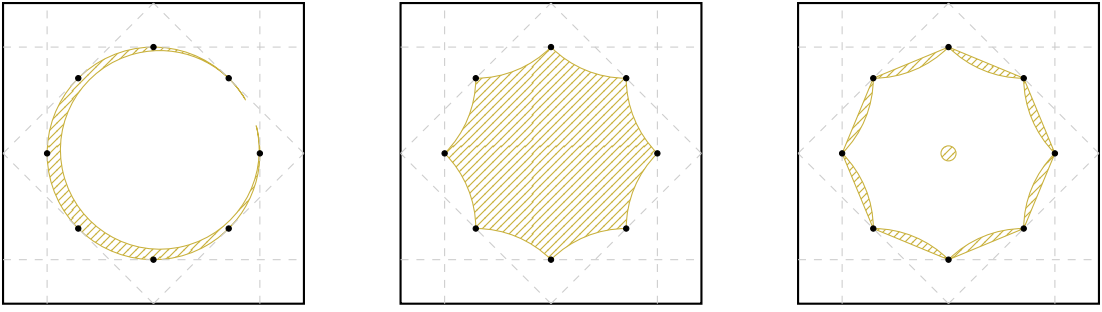

%[[Gallery of cusps, fangs, tilted ones, polygons, $\sqrt x < y < 2 \sqrt x$. Family of superellipses, with a range of $\eta$, and as the set shrinks the decay speeds up even as the damping goes to zero. One dimensional vs zero dimensional trapped set corresponds to presence vs absence of one-sided glancing corresponding to $\alpha<1$ vs $\alpha>1$.]]

    Theorem \ref{t:uniform} follows from the more general and precise Theorem  \ref{thm:suff} below.
	\begin{theorem}
		\label{thm:suff} 
		Let $W \in \Dc^{9,\frac{1}{4}}(\T^2)$. Suppose that, for each $v\in \mathcal V$, there exist constants $\beta_v$ and $\gamma_v$ such that 
		\begin{enumerate}
			\item %for all $p \in {\mathcal G}_1(v)$ 
            for all glancing lines $L \in \mathcal{L}_1(v)$ and all $p \in \Gc(v) \cap L$, there exist $\eta>0$ and $\beta\ge9$ such that $p$ has order $\eta$, for $z$ in a neighborhood of $p$,
			\begin{equation}\label{eq:orderVanish1}
				  d(z)^{\beta} \lesssim W(z) \lesssim d(z)^{\beta},
			\end{equation}
            and 
            \begin{equation}
				\frac{\beta}{\min(\eta, 1)} + \frac{1}{\eta} = \beta_v.
			\end{equation}
			\item %for all $p \in {\mathcal G}_2(v)$ 
            for all glancing lines $L \in \mathcal{L}_2(v)$ and all $p \in \Gc(v) \cap L$, there exist $\eta>0$ and $\gamma\ge9$ such that $p$ has order $\eta$, for $z$ in a neighborhood of $p$
			\begin{equation}\label{eq:orderVanish2}
				d(z)^{\gamma} \lesssim W(z) \lesssim d(z)^{\gamma},
			\end{equation}
            and
            \begin{equation}
				\frac{\gamma}{\min(\eta, 1)} + \frac{1}{\eta} = \gamma_v.
			\end{equation}
            
		\end{enumerate} 
		Then \eqref{eq:stable} holds with 
		\begin{equation}
			\alpha=1-\frac{1}{\beta'+3}, \qquad \beta' = \min_{v \in \mathcal V} \beta_v.
		\end{equation}
		If moreover %$\Gc_1 = \varnothing$ 
        $\mathcal{L}_1=\varnothing$, then \eqref{eq:stable} holds with
		\begin{equation}
			\alpha = 1 +\frac{2}{\gamma'}, \qquad \gamma' =\max_{v \in \mathcal V}  \gamma_v.
		\end{equation}
	\end{theorem}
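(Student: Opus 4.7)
My plan is to reduce Theorem \ref{thm:suff} to a high-frequency semiclassical resolvent estimate for $P(h) = -h^2 \Delta - 1 - i h W$ via Borichev--Tomilov (as in \cite{AL14, BurqZworski2012}) and then prove that estimate by generalizing Sun's averaging technique \cite{Sun23} to glancing points of arbitrary order. The target is a bound $\|u\|_{L^2(\T^2)} \lesssim h^{-\kappa}\|P(h) u\|_{L^2(\T^2)}$ uniformly as $h \to 0^+$, with $\kappa$ chosen to match the claimed rate $\alpha$.

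I would then argue by contradiction. If the bound fails, extract $h_n \to 0$ and normalized $u_n \in L^2(\T^2)$ with $\|P(h_n) u_n\|_{L^2} = o(h_n^\kappa)$. Pairing with $\bar u_n$ and taking imaginary parts shows that $\int_{\T^2} W |u_n|^2$ is correspondingly small. Passing to a subsequence, extract a semiclassical defect measure $\mu$ on $T^* \T^2$. Propagation of singularities combined with the vanishing of $\mu$ on $\{W > 0\}$ and the invariance of $\mu$ under the geodesic flow, together with Lemma \ref{l:finite-trapped} and Remark \ref{r:finiteG}, forces
\begin{equation*}
\supp \mu \subset \bigcup_{v \in \mathcal V} \ \bigcup_{L \in \Lc(v)} L \times \{v\} ,
\end{equation*}
a finite union of lifted glancing lines. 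The remainder of the argument will show that each such lifted line has $\mu$-mass zero, contradicting $\|u_n\|_{L^2} = 1$.

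The quantitative core is a damping-integral lower bound along lines parallel to $v$. Fix $v \in \mathcal V$, $L \in \Lc(v)$, and coordinates $(s, t)$ with $s$ perpendicular to $v$ and $L = \{s = 0\}$. Near each glancing point of order $\eta$ on $L$, Definition \ref{def:order} together with $W \asymp d^\beta$ implies after a direct geometric computation that a parallel trajectory $\{s = \sigma\}$ meets $\omega$ in arcs of total $t$-length $\sim |\sigma|^{1/\eta}$ on which $W \sim |\sigma|^\beta$ when $\eta \geq 1$, yielding
\begin{equation*}
\int_{\T} W(\sigma v^\perp + t v)\, dt \gtrsim |\sigma|^{\beta + 1/\eta} ,
\end{equation*}
while for $\eta < 1$ the cusp-shaped $\omega$ gives instead the larger exponent $(\beta+1)/\eta$, which accounts for the $\min(\eta, 1)$ appearing in $\beta_v$. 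This bound applies for $\sigma$ on the appropriate side(s) of $0$ depending on whether $L \in \Lc_1(v)$ or $L \in \Lc_2(v)$. I would then test the equation against $\chi(s/\epsilon)^2 u_n$ for a cutoff $\chi$ and width $\epsilon$, integrate by parts along $t$ (exploiting that $u_n$ is microlocally concentrated at momentum parallel to $v$), and balance the three scales $h$, $\epsilon$, and $\epsilon^{\beta_v}$ to derive a quantitative contradiction with any concentration of $\mu$-mass at $\{s = 0\}$. In the refined case $\Lc_1 = \varnothing$, a symmetric two-sided averaging cancels the leading boundary correction and produces the stronger exponent $\gamma'$.

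The main obstacle is the rigorous generalization of Sun's averaging lemma from the smooth convex case $\eta = 2$ to arbitrary $\eta > 0$: cusps $\eta < 1$ require a different weight than the convex case, and glancing lines carrying a mixture of one-sided and two-sided points require a careful choice of test function so that the damping picked up on either side suffices. The regularity assumption $W \in \Dc^{9,\frac{1}{4}}$, and in particular $|\partial^\alpha W| \lesssim W^{1 - |\alpha|/4}$ for $|\alpha| \leq 2$, is used to control the commutator and remainder terms generated by differentiating the cutoff by quantities of the form $\int W |u_n|^2$ plus $h$-small errors, so that these terms are absorbable into the main damping integral rather than competing with the $L^2$ mass of $u_n$ one is trying to lower bound.
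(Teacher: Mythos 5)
Your geometric core is exactly right: the computation showing that a line at distance $\sigma$ from a glancing point of order $\eta$ picks up damping $\asymp |\sigma|^{\beta+1/\eta}$ for $\eta\ge 1$ and $\asymp|\sigma|^{(\beta+1)/\eta}$ for $\eta\le 1$ is precisely the paper's Lemma \ref{l:WlocAvg} and Proposition \ref{prop:order}, and it is the genuinely new ingredient. But the analytic framework you wrap around it has two real gaps. First, your claim that the defect measure is supported on a \emph{finite union of lifted glancing lines} is false in the main case of the theorem. Flow invariance plus vanishing on $\{W>0\}$ only forces $\supp\mu\subset\bigcup_{v}\{(z,v):L_{z,v}\cap\omega=\varnothing\}$, and whenever $\mathcal L_1(v)\ne\varnothing$ the undamped lines in direction $v$ sweep out a nondegenerate interval of the transverse parameter $s$ (e.g.\ Figure \ref{f:1ex}(B)); the glancing lines are only the boundary of this family. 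A cutoff $\chi(s/\epsilon)$ concentrated at a single glancing line therefore cannot rule out mass sitting in the interior of the undamped interval. Controlling that interior mass from the damping living outside the interval is exactly what the one-dimensional resolvent estimate for $-\partial_s^2+i\lambda A_v(W)-E$ accomplishes (the paper's Proposition \ref{prop:1dresolve}, built on \cite{DatchevKleinhenz2020} and \cite{LeautaudLerner2017}); your architecture has no substitute for it.

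Second, ``balance the three scales $h$, $\epsilon$, $\epsilon^{\beta_v}$'' conceals the hardest quantitative step. After reducing to the transverse variable, one must bound $(-\partial_s^2+i\lambda V-E)^{-1}$ \emph{uniformly in the spectral parameter} $E\in\Rb$ (equivalently, in the tangential frequency), and the paper does this by splitting into $E\lesssim\lambda^{(\beta'+1)/(\beta'+2)}$ versus large $E$, using a black-box Schr\"odinger estimate in the first regime and a 1-d propagation estimate in the second; for two-sided points a different threshold and the L\'eautaud--Lerner estimate are needed, which is what produces the stronger rate $\alpha=1+2/\gamma'$ when $\mathcal L_1=\varnothing$. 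Your sketch does not address the $E$-dependence at all. Likewise, the passage from $W$ to its direction-average $A_v(W)$ (your ``microlocal concentration at momentum parallel to $v$'') is a nontrivial normal form requiring the full $\Dc^{9,1/4}$ hypothesis; the paper imports it as Proposition \ref{prop:avgresolve} from \cite{Kleinhenz2025} rather than reproving it. In short: your Lemma-\ref{l:WlocAvg}-type estimate is the right new input, but the contradiction/defect-measure scaffolding would fail as described, and the two quantitative reductions it is meant to replace are each substantial theorems that your proposal neither proves nor cites.
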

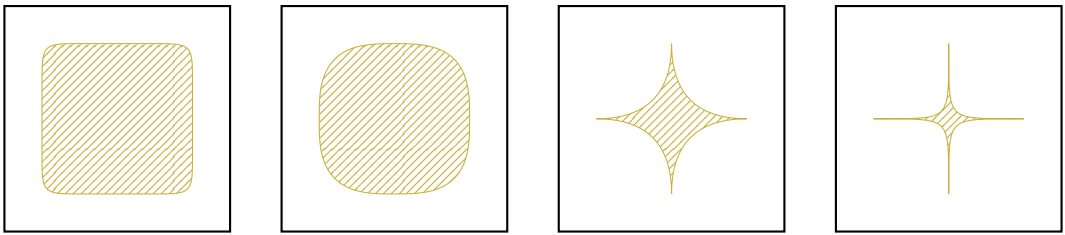
\begin{figure}[h]
       \begin{tikzpicture} 
  \draw[thick] (-1.5,-1.5) rectangle (1.5,1.5); \draw[sand, pattern=north east lines, pattern color=sand, domain=0:pi/2] plot ({cos(\x r)^(2/10)},{sin(\x r)^(2/10)}) -- plot ({-sin(\x r)^(2/10)},{cos(\x r)^(2/10)}) -- plot ({-cos(\x r)^(2/10)},{-sin(\x r)^(2/10)}) -- plot ({sin(\x r)^(2/10)},{-cos(\x r)^(2/10)}) -- cycle; 
   \end{tikzpicture}
    \hspace{.4cm}
    \begin{tikzpicture} 
  \draw[thick] (-1.5,-1.5) rectangle (1.5,1.5); \draw[sand, pattern=north east lines, pattern color=sand, domain=0:pi/2] plot ({cos(\x r)^(2/3)},{sin(\x r)^(2/3)}) -- plot ({-sin(\x r)^(2/3)},{cos(\x r)^(2/3)}) -- plot ({-cos(\x r)^(2/3)},{-sin(\x r)^(2/3)}) -- plot ({sin(\x r)^(2/3)},{-cos(\x r)^(2/3)}) -- cycle; 
   \end{tikzpicture}
    \hspace{.4cm}
    \begin{tikzpicture} 
  \draw[thick] (-1.5,-1.5) rectangle (1.5,1.5); \draw[sand, pattern=north east lines, pattern color=sand, domain=0:pi/2] plot ({cos(\x r)^(2/.5)},{sin(\x r)^(2/.5)}) -- plot ({-sin(\x r)^(2/.5)},{cos(\x r)^(2/.5)}) -- plot ({-cos(\x r)^(2/.5)},{-sin(\x r)^(2/.5)}) -- plot ({sin(\x r)^(2/.5)},{-cos(\x r)^(2/.5)}) -- cycle; 
   \end{tikzpicture}
       \hspace{.4cm}
    \begin{tikzpicture} 
  \draw[thick] (-1.5,-1.5) rectangle (1.5,1.5); \draw[sand, pattern=north east lines, pattern color=sand, domain=0:pi/2] plot ({cos(\x r)^(2/.3)},{sin(\x r)^(2/.3)}) -- plot ({-sin(\x r)^(2/.3)},{cos(\x r)^(2/.3)}) -- plot ({-cos(\x r)^(2/.3)},{-sin(\x r)^(2/.3)}) -- plot ({sin(\x r)^(2/.3)},{-cos(\x r)^(2/.3)}) -- cycle; 
   \end{tikzpicture}
     \caption{Super-ellipses, or balls with respect to the $L^\eta$ metric, for various values of $\eta>0$, decreasing from left to right. As $\eta\to 0$,  the damping set $\omega$ shrinks, while the decay improvement $\beta \mapsto (\beta+1)/\eta$ increases. Conversely, as $\eta \ra \infty$, $\omega$ grows, while the decay improvement $\beta \mapsto \beta+1/\eta$ decreases.
      }
   \end{figure}
	\begin{remark}
    Theorem \ref{thm:suff} further generalizes \cite{Sun23} and \cite[Theorem 1.9]{Kleinhenz2025}:
    \begin{enumerate}
        \item We allow different points of $\Gc$ to have different orders and different  behavior of $W$.
        \item We distinguish one-sided and two-sided glancing lines. If there are one-sided glancing lines then the exact behavior at the two-sided glancing lines is irrelevant. If there are no one-sided glancing lines, then the energy decay rate is faster since $ 1 + \frac{2}{\gamma'} > 1 > 1- \frac{1}{\beta'+3}$.   
        % Note that this is consistent with \cite[Theorem~2.5]{AL14}, because $\Lc_1=\emptyset$ implies that every geodesic eventually intersects $\overline{\omega}$.
        \item When $\mathcal{L}_1=\varnothing$, every geodesic eventually intersects $\overline{\omega}$. In this case, Theorem \ref{thm:suff} extends \cite[Theorem 1.7]{LeautaudLerner2017} and \cite[Example 2.3.2]{Kleinhenz2025} to damping which vanishes on sets larger than finite unions of geodesics. See also \cite{BurqZuily2015,BurqZuily2016} for results on dampings vanishing on a submanifold.
        \item If both ${\mathcal L}_1$ and ${\mathcal L}_2$ are empty   and $\{W>0\}$ is nonempty, then $\{W>0\}$ satisfies the geometric control condition and the energy decays exponentially \cite{RauchTaylor1975}. Exponential energy decay also holds when $\mathcal L_1 = \varnothing$ and $\mathcal L_2 \ne \varnothing$, provided the  damping is a suitable sum of indicator functions of polygons \cite{BurqGerard2018}. 
%        \item This theorem makes precise the influence of orders $<1$. Specifically, when $\mathcal{G}_1$ is nonempty, order $<1$ provides an even greater quantitative improvement to the decay rate.
%        \item When \Ped{$\mathcal{L}_1$} is empty, given two dampings with the same $d^{\gamma'}$ growth but different orders, the damping with the higher order has faster decay. This is the opposite of what happens when ${\mathcal L}_1$ is non-empty, where for fixed $d^{\beta'}$ growth, higher order produces slower decay. In the extreme case  $\mathcal{L}_1 = \emptyset$, if the damping is a \Ped{suitable} sum of indicator functions of polygons, uniform stabilization is proven in \cite{BurqGerard2018}. % \Ped{[K: Leave out the following details as too technincal and distracting?]} \Ped{[P: I agree]} \Ped{provided every glancing line has a polygon that it shares an edge with on its left and right  (vertices are allowed, but they can't be the only thing, see \cite[Remark 1.5]{BurqGerard2018})}.
%        \item When $\mathcal{G}_1$ is empty, this result is complementary to \cite{BurqGerard2018}. We use polynomially growing damping rather than sums of indicator functions and when $\Gc=\Gc_2$ all have some finite order the key assumption \cite[Assumption 1.2]{BurqGerard2018} does not hold. 
        \item
		Our proof of this Theorem, combined with \cite{Kleinhenz2025}, should apply to damping  satisfying \eqref{eq:orderVanish1} with $d^{\beta}$ replaced by $d^{\beta} \ln(d^{-1})^{-\rho}$, and give  energy decay with $t^{-\alpha}$ replaced in \eqref{eq:stable} by 
		\begin{equation}
			r(t)= t^{1-\frac{1}{\beta'+3}} \ln(t)^{-\frac{\rho}{\beta'+3}}.
		\end{equation}
		However, for ease of exposition, we focus on the purely polynomial case.
    \end{enumerate}
    \end{remark}
    Before moving on to our genericity results, we outline the proof of this theorem which further clarifies the result. The proof of this theorem relies on a normal form result, Proposition \ref{prop:avgresolve}, that allows us to replace the damping by its averages along glancing directions. Then invoking \cite{DatchevKleinhenz2020} and \cite{LeautaudLerner2017} we reduce proving energy decay rates to obtaining polynomial bounds on the averaged damping near the boundary of its support, Proposition \ref{prop:1dresolve}. We finally show that averaging $d(z)^{\beta}$ near a glancing point of order $\eta$ produces a function vanishing like $d(z)^{\beta_v}$, Proposition \ref{prop:order}. Because of this approach it is correct to interpret $\beta_v$ (and thus $\beta'$) as replacing $\beta$ in the the polynomial energy decay. 

\subsection{Genericity results}
To motivate our study of generic damping, consider $\omega$ given by a square with edges parallel to the edges of the torus and a rotation of this $\omega$ so that its edges all have irrational slope. 

\begin{figure}[h]
    \begin{tikzpicture}
        \draw[sand, pattern=north east lines, pattern color=sand] 
        (-1,-1) -- (1,-1) -- (1,1) -- (-1,1) -- cycle;
       \draw[thick] (-2,-2) rectangle (2,2);\node at (0,-2.6) {(A)};
    \end{tikzpicture}
    \hspace{.5cm}
    \begin{tikzpicture}
        \def\angle{30}
        \draw[sand, pattern=north east lines, pattern color=sand] 
        ({-cos(\angle)+sin(\angle)}, {-sin(\angle)-cos(\angle)}) -- ({-cos(\angle)-sin(\angle)},{-sin(\angle)+cos(\angle)}) -- ({cos(\angle)-sin(\angle)},{sin(\angle)+cos(\angle)}) -- ({cos(\angle)+sin(\angle)},{sin(\angle)-cos(\angle)}) -- cycle;
       \draw[thick] (-2,-2) rectangle (2,2);\node at (0,-2.6) {(B)};
    \end{tikzpicture}
    
    \caption{(A) A square damping set, (B) the same damping set rotated $30$ degrees counterclockwise about its center.}\label{f:squares}
\end{figure}
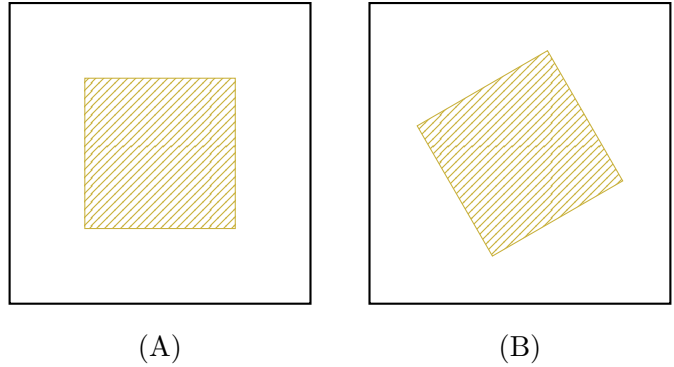
\begin{example}\label{ex:square}
Suppose $W(z) \simeq d(z)^{\beta}$ near $\p \omega$. In case (A) of Figure \ref{f:squares} by \cite[Theorem 1.6]{Kleinhenz2025} we know \eqref{eq:stable} holds with $\alpha = 1- \frac{1}{\beta+3}$. Our result does not provide an improvement because every point in the vertical and horizontal edges is in $\Lc$ and does not have an order. However, in case (B) the only glancing points are the vertices, which have order 1. Thus our Theorem \ref{t:uniform} improves the energy decay rate to $\alpha = 1 - \frac{1}{\beta+1+3}$. This same argument provides an improved decay rate for any rotation of $\omega$ such that its edges have irrational slope, and in fact for all but finitely many rotations of $\omega$ due to Lemma~\ref{l:finite-trapped}. For the remaining rotations of $\omega$ we cannot provide an improvement; an edge parallel to a rational glancing direction may have points in $\Gc$ without an order. Because of this, we say that there are more rotations of $\omega$ which attain the improvement than do not.
\end{example}

We now generalize this to non-degenerate polygons without self-intersections and simple closed $C^2$ curves.

To begin, we give a definition of rotation on the torus. Let $\omega \subset \Rb^2$ be a bounded open set. Denote by $\pi : \Rb^2 \to \Tb^2 \simeq \Rb^2/\Zb^2$ the natural projection on the torus,
%A first attempt to define rotations would be to rotate $\pi^{-1}(\omega)$ in $\Rb^2$, but some basic examples show that this is not what we want.\footnote{Lift a ball in $\Tb^2$ to obtain a periodic lattice of balls in $\Rb^2$, rotate the latter by an irrational angle, and project again on the torus. We obtain the whole torus. \Ped{How does this interact with the statement that if diameter $\omega<1$, then $\Theta(\omega)=\Sb^1$.}} 
and we define a class of sets which behave well with respect to the projection $\pi$.

\begin{definition} \label{def:proj}
We say that a set $\omega \subset \Rb^2$ is properly projected on the torus if $\pi_{\vert \overline{\omega}}$ is one-to-one.
\end{definition}

The decay rate problem that we are investigating is invariant by translation: for any translation $v_0$ the decay rate for the damped wave equation with the damping coefficients $W$ and $W(\bullet - v_0)$ are the same. Therefore, it is natural to identify $\omega$ and all its translations on the torus, or equivalently, $\omega$ and all its translations on $\Rb^2$. Now, denoting by $R_\theta$ the rotation of angle $\theta \in \Sb^1 \simeq \Rb/2 \pi \Zb$ on $\Rb^2$, we have for any $v_0 \in \Rb^2$:
\begin{equation*}
R_\theta (v_0 + \omega)
    = R_\theta v_0 + R_\theta \omega ,
\end{equation*}
namely $R_\theta (v_0 + \omega)$ and $R_\theta \omega$ differ only by a translation. Thus the rotation is well defined on $\Tb^2$, up to translation. We only use this definition for the values of $\theta$ for which $R_\theta \omega$ is properly projected on the torus. 
For future reference, we introduce
\begin{equation*}
\Theta(\omega)
    := \left\{ \theta \in \Sb^1 \;:\; R_\theta \omega \;\, \textrm{is properly projected on the torus} \right\} .
\end{equation*}

This is an open subset of $\Sb^1$.

\begin{remark}
If $\omega$ has diameter $<1$, then $\Theta(\omega) = \Sb^1$.
\end{remark}

\subsubsection{Polygonal damping sets}

% \begin{example}
% 		Irrational rotations of polygons are ``most" rotations of polygons. 
% 	\end{example}	

We now investigate the case where the damping set is a non-degenerate polygon without self-intersections. First, we introduce some notation to parametrize the ``space of polygons" on a torus.

A general polygon in $\Rb^2$ with $n \ge 3$ vertices is given by an ordered list of vertices $x_1, x_2, \ldots, x_n \in \Rb^2$. Modulo translations, one can describe a polygon by an ordered list of vectors $v_1, v_2, \ldots, v_n \in \Rb^2$, representing the oriented edges, such that
\begin{equation} \label{e:polygon}
v_1 + v_2 + \cdots + v_n = 0 .
\end{equation}
Hence, up to translation, polygons with $n$ vertices (possibly degenerate and self-intersecting) are parametrized by the $(2n-2)$ dimensional subspace $\mathcal{H}_n \subset \Rb^{2n}$ of equation~\eqref{e:polygon}. A polygon is non-degenerate if all its edges have positive length, namely $v_j \neq 0$ for all $1 \le j \le n$.

Rotations $(R_\theta)_{\theta \in \Sb^1}$ on $\Rb^2$ act on $\mathcal{H}_n$ through the mapping
\begin{equation*}
P = (v_1, v_2, \ldots, v_n)
	\longmapsto R_\theta P = (R_\theta v_1, R_\theta v_2, \ldots, R_\theta v_n) .
\end{equation*}

Given a polygon $P$ (modulo translations) described by its oriented edges $v_1 ,v_2, \ldots, v_n$ and without self-intersections, we denote by $\omega_P \subset \Rb^2$ the area enclosed by those edges. We say that $P$ is properly projected on the torus if ${\omega}_P$ is properly projected on the torus, in the sense of Definition~\ref{def:proj}. In that case, we identify $\omega_P \subset \Rb^2$ and its projection $\pi(\omega_P) \subset \Tb^2$.

One can check that the set of non-degenerate polygons without self-intersections that project properly on the torus, is an open subset $\mathcal{P}_n \subset \mathcal{H}_n$. 

We now state our genericity result for polygons. It says that for ``most" polygons, the only glancing points are vertices, which have order 1. Furthermore for a given polygon, all but finitely many rotations of it possess this same property.

%\Ped{P:Can we state the proposition in the following way? [A: OK for me]
\begin{proposition}  \label{p:polygon}
    Let $n \ge 3$. There is an open dense set $\mathcal{Q} \subset \mathcal{P}_n$, such that for any polygon $P \in \mathcal{Q}$, the glancing set ${\mathcal G}$ of $\omega_P$ is contained in the set of vertices of $P$, and all of these glancing points have order $1$.

    In addition, for any $P \in \mathcal{P}_n$, there exist finitely many angles $\theta_1, \theta_2, \ldots, \theta_{j_0} \in \Theta(\omega_P)$ such that $R_{\theta} P \in \mathcal{Q}$ for any $\theta \in \Theta(\omega_P) \setminus \{\theta_1, \theta_2, \ldots, \theta_{j_0}\}$.
\end{proposition}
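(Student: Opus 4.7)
The plan is to exploit the fact that the glancing directions of $\omega_P$ form a finite set of rational directions of bounded complexity, controlled by the inverse inradius $1/\epsilon_P$ of $\omega_P$; the proposition then reduces to avoiding a locally finite collection of edge-slopes. The first step is to sharpen Lemma~\ref{l:finite-trapped} to a complexity bound: for primitive $(p,q) \in \Zb^2$, parallel closed geodesics on $\T^2$ in direction $(p,q)/\sqrt{p^2+q^2}$ have perpendicular spacing $1/\sqrt{p^2+q^2}$, so if $\sqrt{p^2+q^2} \ge 1/(2\epsilon_P)$ the inscribed ball in $\omega_P$ meets every such geodesic and this direction cannot be glancing. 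Hence the glancing directions of $\omega_P$ lie in a finite set $V_P$ of primitive rational directions of complexity at most $1/(2\epsilon_P)$. Since $P \mapsto \epsilon_P$ is continuous on $\mathcal{P}_n$, near any $P_0 \in \mathcal{P}_n$ there is a fixed finite set $V_{P_0}^+$ containing $V_{P'}$ for every $P'$ in some neighborhood of $P_0$.

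\textbf{Construction of $\mathcal{Q}$ and verification of order~$1$.} Define $\mathcal{Q} \subset \mathcal{P}_n$ to be the set of $P$ such that no edge of $P$ is parallel to any $v \in V_P$. By the preceding uniform bound, near $P_0$ this is equivalent to ``no edge is parallel to any $v \in V_{P_0}^+$,'' a finite conjunction of open slope-avoidance conditions on the edge vectors; its complement in a neighborhood is a finite union of codimension-one affine subspaces of $\mathcal{H}_n$, so $\mathcal{Q}$ is both open and dense. For $P \in \mathcal{Q}$, any glancing line $L$ in direction $v$ meets each edge of $P$ transversely and cannot enter $\omega_P$, so $L \cap \partial \omega_P$ consists only of vertices, and such vertices must be convex (at a reflex vertex the exterior wedge has angular width $<\pi$ and could not contain an entire line). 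Furthermore a convex wedge of angular width $<\pi$ cannot straddle $L$ while avoiding the directions $\pm v$, so the wedge lies in one half-plane determined by $L$ and the glancing is one-sided. In an affine chart $\psi$ with $\psi(z) = 0$ and $\psi(L) = \{x=0\}$, the wedge is bounded by two half-lines, neither parallel to the $y$-axis; after an affine shear of the $y$-coordinate these can be arranged to have finite nonzero slopes straddling the diagonal, and then \eqref{e:order-1-sided} with $\eta = 1$ holds for suitably large $C_{in}, C_{out}$, giving order $1$.

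\textbf{Rotations and main obstacle.} For the second statement, rotations are Euclidean isometries, so $R_\theta \omega_P$ has the same Euclidean inradius as $\omega_P$, and proper projection on $\T^2$ preserves this as a lower bound for the torus inradius; hence $V_{R_\theta P} \subset V_P$ is a fixed finite set, independently of $\theta \in \Theta(\omega_P)$. The set of $\theta$ for which $R_\theta P \notin \mathcal{Q}$ consists of those $\theta$ for which some rotated edge $R_\theta v_j$ is parallel to some $v_i \in V_P$; for each pair $(j,i)$ this fixes $\theta$ modulo $\pi$ up to sign, giving at most $2 n \cdot \# V_P$ bad angles $\theta_1, \ldots, \theta_{j_0}$, and $R_\theta P \in \mathcal{Q}$ for every other $\theta \in \Theta(\omega_P)$. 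The main delicate point is the uniform complexity bound on glancing directions: without it, a small perturbation of $P$ could in principle activate new glancing directions of arbitrarily high complexity, threatening the openness of $\mathcal{Q}$. Once this is secured, the remaining arguments --- codimension counting in $\mathcal{H}_n$, the convex wedge geometry at a vertex, and the rotation--slope matching --- are essentially elementary.
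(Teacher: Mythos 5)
Your proposal is correct and follows essentially the same route as the paper's proof: the inradius-based complexity bound on glancing directions (this is exactly Lemma~\ref{l:finite-trapped} together with the lower semicontinuity of the inradius under perturbation), the definition of $\mathcal{Q}$ by edge-slope avoidance with openness/density via a finite union of codimension-one conditions in $\mathcal{H}_n$, the convex-wedge affine chart giving order $1$ at glancing vertices, and the finitely many bad rotation angles per edge. The only cosmetic differences are that the paper takes $\mathcal{Q}$ to be the interior of the slope-avoidance set (making openness automatic and leaving only density to check via the subspaces $\mathcal{E}_{v,k}$), and one trivial slip in your chart construction: to get $\mathcal{F}_1 \cap \{x \ge 0, y \ge 0\}$ inside the wedge you need $C_{in}$ sufficiently close to $1$ (so that the slopes $[C_{in}^{-1}, C_{in}]$ fit between the two bounding rays), not ``suitably large.''
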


Combining Proporition \ref{p:polygon} with Theorem~\ref{thm:suff} yields the following decay improvement, which generalizes Example \ref{ex:square}. % Although edges of polygons do not have an order, most rotations of polygons do not have their edges in the glancing set.

%As a consequence of Theorem~\ref{thm:suff}, we have that if the damping set $\omega$ is a polygon, then for most polygons the energy decay rate is improved relative to damping supported on a strip and vanishing at the same rate.

\begin{corollary} \label{c:polygon}
Let $P \in \mathcal{Q}$, given by Proposition~\ref{p:polygon}. Suppose $\omega = \omega_P$. Then for any $\beta \ge 9$ and $W \in \Dc^{9, \frac{1}{4}}(\Tb^2)$ such that
\begin{equation} \label{e:W-d(z)}
\exists C > 0 : \qquad 
	C^{-1} d(z)^\beta
		\le W(z)
		\le C d(z)^\beta,
\end{equation}
in a neighborhood of the glancing set $\mathcal{G}$ of $\omega_P$,
we have decay at rate
\begin{equation} \label{e:decay-polygon}
\alpha
	=1- \dfrac{1}{\beta +1 + 3} .
\end{equation}
For any $P \in \mathcal{P}_n$, the above applies to all but a finite number of rotations $R_\theta P$.
\end{corollary}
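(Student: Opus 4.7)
The plan is to apply Theorem \ref{thm:suff} (or equivalently its simpler corollary Theorem \ref{t:uniform}) directly, using Proposition \ref{p:polygon} to verify its geometric hypotheses. Since $P \in \mathcal{Q}$, Proposition \ref{p:polygon} tells us that every point $z \in \mathcal{G}$ is a vertex of $P$ and has order $\eta = 1$ (in the sense of Definition \ref{def:order}). The damping hypothesis \eqref{e:W-d(z)} is exactly the bound $d(z)^\beta \lesssim W(z) \lesssim d(z)^\beta$ in a neighborhood of $\mathcal{G}$ required by Theorem \ref{t:uniform}, and by assumption $W \in \Dc^{9,\frac{1}{4}}(\Tb^2)$ with $\beta \ge 9$.

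With these hypotheses in hand, the first part of the corollary follows by substituting $\eta = 1$ into the decay exponent of Theorem \ref{t:uniform}:
\begin{equation*}
\alpha
    = 1 - \frac{1}{\tfrac{\beta}{\min(1,1)} + \tfrac{1}{1} + 3}
    = 1 - \frac{1}{\beta + 1 + 3} ,
\end{equation*}
which is exactly \eqref{e:decay-polygon}. I would spell out, for clarity, that the value of $\beta_v$ (resp.\ $\gamma_v$) appearing in Theorem \ref{thm:suff} is the same for every glancing direction $v \in \mathcal{V}$ and equals $\beta+1$, so that $\beta' = \min_v \beta_v = \beta + 1$.

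For the second statement, I invoke the second half of Proposition \ref{p:polygon}: for any $P \in \mathcal{P}_n$, there is a finite set $\{\theta_1, \ldots, \theta_{j_0}\} \subset \Theta(\omega_P)$ such that $R_\theta P \in \mathcal{Q}$ whenever $\theta \in \Theta(\omega_P) \setminus \{\theta_1, \ldots, \theta_{j_0}\}$. For each such $\theta$, the rotated polygon satisfies the hypotheses of the first part of the corollary (note that the damping condition \eqref{e:W-d(z)} is stated on $\Tb^2$ and is invariant under identifying $\omega_P$ with $\pi(\omega_P)$), so \eqref{e:decay-polygon} holds for $\omega = \omega_{R_\theta P}$.

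There is essentially no obstacle here, as the corollary is a bookkeeping consequence of the two main inputs already established. The only subtlety I would emphasize is ensuring that the damping coefficient $W$ can indeed be chosen to satisfy \eqref{e:W-d(z)} together with the regularity $W \in \Dc^{9,\frac{1}{4}}(\Tb^2)$ — for instance, one may simply take $W$ to be an appropriately smoothed version of $d(z)^\beta \mathbbm{1}_{\omega_P}$, but this existence issue lies outside what the corollary itself asserts, since $W$ is given as a hypothesis.
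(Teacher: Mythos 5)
Your proposal is correct and follows essentially the same route as the paper: apply Proposition~\ref{p:polygon} to identify the glancing set as vertices of order $1$, then invoke Theorem~\ref{thm:suff} (equivalently Theorem~\ref{t:uniform} with $\eta=1$) to get $\beta' = \beta + 1$ and hence $\alpha = 1 - \frac{1}{\beta+1+3}$, with the rotation statement following from the second half of Proposition~\ref{p:polygon}. The paper's only additional remark is that this rate corresponds to the worst case $\mathcal{L}_1 \neq \varnothing$ (decay being faster otherwise), which your computation implicitly covers.
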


These results are proved in Section~\ref{sec:generic-polygon}.

\subsubsection{$C^2$ damping sets}

In this section, we consider damping sets $\omega_\gamma$ whose boundary is given by a simple closed $C^2$ curve $\gamma$. Denote by $\mathcal{U}$ the set of simple closed curves $\gamma \in C^2(\Sb^1; \Rb^2)$, such that $\omega_\gamma$ is properly projected on the torus and $\dot \gamma(t) \neq 0$ for all $t \in \Sb^1$. One can check that this is an open subset of $C^2(\Sb^1; \Rb^2)$ for the $C^2$ topology. 
%We can identify $\gamma$ and its image by the projection $\pi$ on the torus, up to translations. 
Notice that for any $\gamma \in \mathcal{U}$, the curvature
\begin{equation} \label{e:def-curvature}
\kappa\left(\gamma(t)\right)
    := \dfrac{1}{|\dot \gamma(t)|^3} \sqrt{|\ddot \gamma(t)|^2 |\dot \gamma(t)|^2 - (\ddot \gamma(t) \cdot \dot \gamma(t))^2} ,
\end{equation}
is well defined since  $\dot \gamma(t) \neq 0$ for all times, and it does not depend on the parametrization of the curve. As we did for polygons, we can rotate $\gamma$, up to translations, by setting
\begin{equation*}
(R_\theta \gamma)(t)
    := R_\theta\left( \gamma(t) \right) ,
        \qquad t \in \Sb^1 .
\end{equation*}

	% \begin{definition}
	% 	Given an angle $\theta \in [0, 2\pi)$ and a function\footnote{Should require the function to be supported in a fundamental domain, so that this is meaningful?} $f: \T^2 \ra \Rb$ we define the rotation by $\theta$, $R_{\theta}^* f$ as 
	% 	\begin{equation}
	% 		R_{\theta}^* f(x,y) = f(R_{\theta}(x,y)),
	% 	\end{equation}
	% 	using the identification $\T^2 \simeq \Rb^2/ (a\Zb \oplus b \Zb)$.
	% % \end{definition}

    We now state our genericity result. It says that, for ``most" simple closed $C^2$ curves, all glancing points have nonvanishing curvature and so have order 2. Furthermore, for a given curve, all but a compact measure zero set of rotations of it possess this same property.
    \begin{proposition}\label{p:curve}
    There is an open dense subset $\mathcal{Y} \subset \mathcal{U}$ such that for any $\gamma \in \mathcal{Y}$, the glancing set $\mathcal{G}$ of $\omega_\gamma$ is contained in the subset of $\gamma$ where the curvature $\kappa > 0$. In particular the glancing points all have order 2. %\Ped{Add that the glancing set is finite... depends on our definition of order}

    In addition, for any $\gamma \in \mathcal{U}$, there exists a compact set $K \subset \Sb^1$ of measure zero such that $R_\theta \gamma \in \mathcal{Y}$ for all $\theta \in \Theta(\omega_\gamma) \setminus K$.
\end{proposition}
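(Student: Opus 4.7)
The plan rests on two key facts. First, let $\Phi(t) := \arg(\dot\gamma(t))$; this is $C^1$ on $\Sb^1$ with $\Phi'(t) = |\dot\gamma(t)|\,\kappa(\gamma(t))$, so on the closed inflection set $I := \{t \in \Sb^1 : \kappa(\gamma(t)) = 0\}$ we have $\Phi'|_I \equiv 0$, and Sard's theorem in one variable gives $|\Phi(I)| = 0$ in $\Sb^1$. Second, $\omega_\gamma$ contains an open ball of some radius $\varepsilon_0 > 0$, a property preserved under rotation. Inspection of the proof of Lemma~\ref{l:finite-trapped} reveals that any glancing direction of $R_\theta \omega_\gamma$ on the torus is a primitive rational direction $(a,b)/\sqrt{a^2+b^2}$ with $\sqrt{a^2+b^2} < 1/(2\varepsilon_0)$; hence all glancing directions of $R_\theta\omega_\gamma$ lie in a fixed finite set $V_{\varepsilon_0} \subset \Sb^1$ independent of $\theta$.

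I would prove the second statement first. If $R_\theta \gamma \notin \mathcal{Y}$, then some glancing point of $R_\theta \omega_\gamma$ sits at $R_\theta \gamma(t_0)$ for some $t_0 \in I$, whose associated glancing direction $v \in V_{\varepsilon_0}$ satisfies $R_\theta \dot\gamma(t_0) \parallel v$, equivalently $\Phi(t_0) \equiv \arg(v) - \theta \pmod{\pi}$. Thus $\theta$ lies in the compact measure zero set $\arg(v) - \Phi(I)$ (modulo $\pi$), and taking the finite union
\[
K := \bigcup_{v \in V_{\varepsilon_0}} \bigl(\arg(v) - \Phi(I)\bigr) \pmod{\pi}
\]
produces a compact set of measure zero in $\Sb^1$ outside which every rotation of $\gamma$ lies in $\mathcal{Y}$.

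For openness of $\mathcal{Y}$, assume by contradiction that $\gamma_n \to \gamma$ in $C^2$ with $\gamma \in \mathcal{Y}$ but $\gamma_n \notin \mathcal{Y}$. Each $\gamma_n$ has a glancing point $p_n = \gamma_n(t_n)$ with $\kappa(\gamma_n(t_n)) = 0$ along a line with direction $v_n$. For $n$ large $\omega_{\gamma_n}$ contains a ball of radius $\geq \varepsilon_0/2$, so $v_n \in V_{\varepsilon_0/2}$; passing to subsequences we obtain $v_n \equiv v$ and $t_n \to t_0$. Continuity of curvature yields $\kappa(\gamma(t_0)) = 0$, and since $\omega_{\gamma_n} \to \omega_\gamma$ in Hausdorff distance, any $q \in L_{\gamma(t_0), v} \cap \omega_\gamma$ would produce $q_n \in L_{p_n, v}$ inside the open set $\omega_{\gamma_n}$ for large $n$, contradicting that $L_{p_n, v_n}$ is glancing. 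Hence $\gamma(t_0)$ is a glancing point of $\omega_\gamma$ with vanishing curvature, contradicting $\gamma \in \mathcal{Y}$. Density follows immediately from the second statement: $K$ has measure zero and $R_\theta$ is close to the identity in $C^2$ for small $\theta \in \Theta(\omega_\gamma)$ (an open neighborhood of $0$), so every $\gamma \in \mathcal{U}$ is a $C^2$-limit of rotations in $\mathcal{Y}$. The main obstacle is this openness argument, where the uniform finiteness of the glancing-direction set $V_{\varepsilon_0/2}$ is essential both to extract a well-defined limit direction $v$ and to guarantee that the limit point is genuinely a glancing point of $\omega_\gamma$.
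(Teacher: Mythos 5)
Your proposal is correct, and on the rotation and density parts it coincides with the paper's argument: both use the finiteness of candidate glancing directions from Lemma~\ref{l:finite-trapped} (with the inradius controlled uniformly under rotation and small perturbation) and Sard's theorem applied to the tangent-direction map $t \mapsto \dot\gamma(t)/|\dot\gamma(t)|$, whose critical set is exactly the inflection set, to produce the compact null set $K$; density is then obtained by rotating by small admissible angles outside $K$ in both cases. Where you genuinely diverge is in the choice of $\mathcal{Y}$ and hence in the openness argument. The paper defines $\mathcal{Y}$ as the nonvanishing set of the auxiliary function $f_\gamma(t) = \kappa(\gamma(t)) + \prod_{\pm}\prod_{v}\bigl| \dot\gamma(t)/|\dot\gamma(t)| \pm v/|v| \bigr|$, a condition strictly stronger than ``the glancing set avoids inflection points'' (it forbids inflections at \emph{every} point whose tangent is parallel to a candidate glancing direction, glancing or not), which makes openness a soft continuity-plus-compactness-of-$\Sb^1$ statement. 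You instead take the natural maximal set and prove its openness by contradiction, extracting a limit glancing point with vanishing curvature from a sequence $\gamma_n \to \gamma$; this buys a larger, more intrinsic $\mathcal{Y}$ at the cost of a harder argument. Your limiting argument is sound, but be aware it silently uses two facts that deserve a line each: (i) a glancing line meeting the $C^2$ boundary curve at a point must be tangent to the curve there (otherwise it crosses into $\omega_\gamma$) — this is also what justifies ``$R_\theta\dot\gamma(t_0) \parallel v$'' in your rotation step, and the paper uses it implicitly too; and (ii) the stability of the enclosed region, i.e.\ every compact subset of $\omega_\gamma$ is contained in $\omega_{\gamma_n}$ for $n$ large, which is what turns $q \in L_{\gamma(t_0),v} \cap \omega_\gamma$ into $q_n \in L_{p_n,v}\cap\omega_{\gamma_n}$. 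Also, strictly speaking $\Phi'(t) = |\dot\gamma(t)|\,\kappa_s(t)$ with $\kappa_s$ the \emph{signed} curvature, so only $|\Phi'| = |\dot\gamma|\,\kappa$; this does not affect the identification of the critical set.
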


As a consequence of Theorem~\ref{thm:suff}, we have that if the damping set is bounded by a curve, then for most simple closed $C^2$ curves the energy decay rate is improved relative to damping supported on a strip and growing at the same rate. Moreover, given a fixed damping set of this form, ``most" rotations of this set exhibit this improved decay rate.

\begin{corollary} \label{c:curve}
Let $\gamma \in \mathcal{Y}$, given by Proposition~\ref{p:curve}. Suppose $\omega = \omega_{\gamma}$. Then for any $\beta \ge 9$ and $W \in \Dc^{9, \frac{1}{4}}(\Tb^2)$ such that
\begin{equation*}% \label{e:W-d(z)}
\exists C > 0 : \forall z \in \Tb^2, \qquad 
	C^{-1} d(z)^\beta
		\le W(z)
		\le C d(z)^\beta,
\end{equation*}
we have decay at rate
\begin{equation*}% \label{e:decay-curve}
\alpha
	= 1-\dfrac{1}{\beta + \frac{1}{2} + 3} .
\end{equation*}

For any $\gamma \in \mathcal{U}$, the above applies to rotations $R_\theta \gamma$ with $\theta \in \Theta(\omega_\gamma) \setminus K$, where $K$ is a compact subset of $\Sb^1$ with measure zero. 
\end{corollary}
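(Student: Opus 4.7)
The plan is to deduce Corollary~\ref{c:curve} as an immediate consequence of Proposition~\ref{p:curve} combined with Theorem~\ref{t:uniform}. Fix $\gamma \in \mathcal{Y}$ and set $\omega = \omega_\gamma$. By the first assertion of Proposition~\ref{p:curve}, every glancing point $z \in \mathcal{G}$ lies at a point of $\gamma$ with strictly positive curvature, and in particular has order $\eta = 2$ in the sense of Definition~\ref{def:order}. The hypothesis on $W$ gives $C^{-1} d(z)^\beta \le W(z) \le C d(z)^\beta$ on all of $\Tb^2$, which is stronger than the local bound near $\mathcal{G}$ required by Theorem~\ref{t:uniform}. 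Hence the assumptions of Theorem~\ref{t:uniform} hold with the uniform value $\eta = 2$ and the given $\beta \ge 9$.

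Substituting $\eta = 2$ into the decay rate provided by Theorem~\ref{t:uniform} yields
\begin{equation*}
\alpha
    = 1 - \frac{1}{\frac{\beta}{\min\{2, 1\}} + \frac{1}{2} + 3}
    = 1 - \frac{1}{\beta + \frac{1}{2} + 3} ,
\end{equation*}
which is the claimed rate. For the rotation statement, I invoke the second assertion of Proposition~\ref{p:curve}: for any $\gamma \in \mathcal{U}$ there exists a compact set $K \subset \Sb^1$ of measure zero such that $R_\theta \gamma \in \mathcal{Y}$ for every $\theta \in \Theta(\omega_\gamma) \setminus K$. The first half of the argument then applies to each such rotated curve, giving the same improved decay rate.

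Since every nontrivial assertion needed has been isolated in Proposition~\ref{p:curve} and Theorem~\ref{t:uniform}, the proof is essentially pure bookkeeping; the only genuine content is that strictly positive curvature at a glancing point translates into order $2$, which is already built into Proposition~\ref{p:curve}. Consequently I do not anticipate any real obstacle in writing out this proof---all the work lies in the two preceding results. One minor point worth noting is that if $\mathcal{G} = \varnothing$, then the order hypothesis in Theorem~\ref{t:uniform} is vacuously satisfied for any choice of $\eta$, and the same polynomial bound still holds a fortiori (in fact geometric control then gives exponential decay by \cite{RauchTaylor1975}).
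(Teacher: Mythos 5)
Your proof is correct and follows essentially the same route as the paper: the paper likewise deduces the corollary from Proposition~\ref{p:curve} (positive curvature at glancing points, hence order $2$) together with the averaging machinery, the only cosmetic difference being that the paper invokes Theorem~\ref{thm:suff} directly and spells out the affine chart witnessing order $2$, whereas you cite Theorem~\ref{t:uniform} and the ``order $2$'' clause already present in the statement of Proposition~\ref{p:curve}. No gap.
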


These results are proved in Section~\ref{sec:generic-curve}.

\begin{remark}\hfill 
\begin{enumerate}
    \item 
Recall that compact sets of measure zero are nowhere dense.	
\item This corollary generalizes the energy decay rate result of \cite{Sun23}, which obtains the same energy decay rate, but requires the curvature of $\gamma$ to be positive everywhere.
\end{enumerate}
\end{remark}
Although for polygons the number of rotations that do not produce an improved decay rate is finite, a simple closed  curve  $\gamma \in C^2(\Sb^1; \Rb^2)$ can have $K$ uncountable. For example $K$ can be the Cantor set:
	\begin{example}  Define $f \in C^\infty([0,1])$ by $f(x)= x + \int_0^x \int_0^y \phi(z) \,dz \,dy$, where $\phi$ is a nonnegative $C^{\infty}$ function whose zero set equals the Cantor set. Then $f'>0$, $f'' \geq 0$, and $f''$ vanishes on the Cantor set. Take $\gamma \in \mathcal U$ such that $\omega_\gamma$ is convex and contained in a small enough disk so that $\Theta(\omega_\gamma) = \mathbb S^1$, and such that a subset of $\gamma(\mathbb S^1)$ is similar to the graph of $f$. By convexity, every tangent line of the graph of $f$ is locally a one-sided glancing line, and $t \mapsto f'(t) $ is injective. Thus,   uncountably many  directions  have a tangent point with zero curvature. Then each rotation of $\omega_\gamma$ that makes such a  direction parallel to $(1,0)$ is an element of $K$. %\Ped{K: Corrected this example; does it look OK now?} \Ped{P: I think so.}
	\end{example}

 %    \begin{example}
	% 	There exists ${\mathcal G}(W)$ such that $\mathcal{B}$= ``bad rotations" is uncountable. That is for all $\theta \in \mathcal{B}$, ${\mathcal G}(W_{\theta})$ contains a straight line with rational slope.
	% \end{example}

% \Ped{Possible Theorem 3. For any $W \in \mathcal D^{9, \frac 14}(\mathbb T^2)$ and $\varepsilon>0$, there is $W^*$ which is $\varepsilon$ close to $W$, both in the sense of $\mathcal D^{9, \frac 14}(\mathbb T^2)$ and in the sense that the supports are close, for which we have `improved' decay. Or define `$\eta$-good' shapes for the support of $W$, and say that generic shapes are $\eta$-good?}

% Optimality? Quasimodes? Chenmnin Sun covers arcs of circles. Maybe super-ellipses are a natural family to treat in more detail.

% The case where ${\mathcal G}_1$ is empty is easier to handle because the Leautaud-Lerner result allows the damping to just be bounded by a constant multiple of $x^\gamma$. The case where ${\mathcal G}_1$ is nonempty is harder because the K'18 quasimode construction requires the damping to be exactly $x^{\beta}$.

\textbf{Acknowledgments:} The first author was supported by a Simons collaboration grant for mathematicians.
The second author was supported by the National Science Foundation [DMS-2530465].

\section{Reduction to Averaging}	

By \cite[Theorem 2.4]{BorichevTomilov2010}, as stated in \cite[Proposition 2.4]{AL14}, the wave stabilization estimate \eqref{eq:stable} is equivalent to the resolvent estimate
\begin{equation}\label{e:resest}
			\nm{\left(-\Delta+ i\lambda W - \lambda^2 \right)^{-1}}_{\Lc(L^2(\Tb^2))} \lesssim \lambda^{\frac{1-\alpha}\alpha},  \qquad \text{for }\lambda \gg 1.
\end{equation} 

We reduce \eqref{e:resest} to a family of one dimensional estimates by averaging over glancing directions $v \in \mathcal V$. For $v \in \mathcal V$, define the averaging operator along $v$:
	\begin{equation}
		f \mapsto A(f)_v(s) = \frac{1}{T_v} \int_0^{T_v} f(sv^{\bot}+tv) dt,
	\end{equation}
	where $T_v$ is the period of $t \mapsto s v^{\bot} +tv$.
	Since $v$ is rational, using a standard change of coordinates we may regard $A(f)_v(s)$ as a function on $\Sb^1$. See \cite[Section 6]{AL14}, \cite[Section 2.2]{Sun23}, or \cite[Section 5.3]{Kleinhenz2025}.
	%	Finally, let 
	%	\begin{equation}
	%		\Ac_v= \begin{pmatrix} 0 & Id \\ \Delta & -A_v(W) \end{pmatrix}.
	%	\end{equation}

	Now we can state the normal form result that relates resolvent estimates for $y$-invariant damping and general damping. Roughly, it says that for sufficiently regular damping $W$, if the average of $W$ along every direction $v \in \mathcal V$ produces a resolvent estimate, then $W$ produces the same estimate. %Note that after averaging along a periodic direction $v$, the resultant function $A_v(W)$ is invariant in the direction $v$. 
	This is \cite[Theorem 1.12]{Kleinhenz2025}:
	\begin{proposition}\label{prop:avgresolve}
		Suppose $W \in \Dc^{9,\frac{1}{4}}(\T^2)$, and there exists $\rho:[1,\infty) \ra (0,\infty)$ with $\pl = o(\lambda^{\frac{1}{3}})$, such that  for all $v \in \mathcal V$, there exist $\lambda_v, C_v>0$, such that for $\lambda \geq \lambda_v$ and all $E \in \mathbb R$. %\Ped{K: Doesn't \cite[Theorem 1.12]{Kleinhenz2025} require the estimate for all $E \in \mathbb R$? And anyway we prove our 1d estimates for all $E \in \mathbb R$ so this weakening of the hypothesis is not needed?} \\\Ped{P: the second half of \cite[Theorem 1.12]{Kleinhenz2025} only requires the estimate for these $E$, see \cite[(1.5)]{Kleinhenz2025}. However, it is correct that we prove the 1d estimate for all $E \in \Rb$.}
		\begin{equation}\label{assumed1dresolventeq}
			\nm{\left(-\p_s^2 + i\lambda A_v(W)(s) -E\right)^{-1} }_{\Lc(L^2(\Sb^1))} \leq C_v \pl, %h^{-2},
		\end{equation}
		then there exists $C, \lambda_0>0$ such that for $\lambda \geq \lambda_0$ 
		\begin{equation}
			\nm{\left(-\Delta+ i\lambda W - \lambda^2 \right)^{-1}}_{\Lc(L^2(\Tb^2))} \leq C \pl. %h^{-2}. 
		\end{equation}
		%It would be equivalent to write that
		%\begin{equation}
		%\nm{(-h^2\p_z^2 + i h A_v(W)(z) -E)^{-1} }_{\Lc(L^2)} \leq C_v \pl^{-1}h^{-2},
		%\end{equation}
		%implies
		%\begin{equation}
		%\nm{\left(-h^2\Delta+ i h W -1 \right)^{-1}}_{\Lc(L^2)} \leq C_2 \pl^{-1} h^{-2}. 
		%\end{equation}
	\end{proposition}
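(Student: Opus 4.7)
The plan is to deduce the 2D resolvent bound microlocally, direction by direction, by reducing each piece to the 1D hypothesis \eqref{assumed1dresolventeq}. Set $h = \lambda^{-1}$ and rewrite the claim as the semiclassical estimate $\|u\|_{L^2} \lesssim \rho(\lambda) \|P_\lambda u\|_{L^2}$, where $P_\lambda = -\Delta + i \lambda W - \lambda^2$. By Lemma~\ref{l:finite-trapped}, the glancing set $\mathcal V$ is finite, so $\mathbb S^1$ can be covered by finitely many small sectors: one small sector around each $v \in \mathcal V$, together with a complementary sector $\Sigma_0$ whose directions all meet $\omega$ in bounded time. Let $\{\chi_v\}$ be a smooth partition of unity on $\mathbb S^1$ subordinate to this cover, and let $\Pi_v = \chi_v(D/|D|)$ be the associated Fourier multipliers. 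I would bound $\|\Pi_v u\|_{L^2}$ separately and then reassemble.

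For the non-glancing piece $\Pi_0 u$, every direction in $\Sigma_0$ satisfies a uniform geometric control condition for $\omega$, so the standard semiclassical observability estimate (via \cite{BurqZworski2012, BurqZworski2019} or \cite[Theorem~2.3]{AL14}) yields
\begin{equation*}
\|\Pi_0 u\|_{L^2}^2
    \lesssim \lambda^{-1} \langle W u, u \rangle + \text{lower-order terms} ,
\end{equation*}
which is much better than the required $\rho(\lambda)$ loss and can be absorbed on the right by Cauchy--Schwarz against $\|P_\lambda u\|$.

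The heart of the argument is each glancing piece $\Pi_v u$, handled through a normal form that replaces $W$ by $A_v(W)$ microlocally. Since $W - A_v(W)$ has zero mean along every $v$-orbit, the cohomological equation $v \cdot \nabla \phi_v = W - A_v(W)$ admits a solution $\phi_v$ on $\T^2$. Conjugating $P_\lambda$ by an operator of the form $I + \tfrac{i}{2} \mathrm{Op}(\phi_v)$ (multiplied by cutoffs that localize to the $v$-sector), a short symbolic computation shows that, modulo commutators, $i \lambda W$ is transformed into $i \lambda A_v(W)$ plus error operators whose symbols involve $\partial^\gamma W$ for $|\gamma| \le 2$. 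This is exactly where the regularity hypothesis $W \in \Dc^{9,1/4}$ enters: the bounds $|\partial^\gamma W| \lesssim W^{1 - |\gamma|/4}$ let every error be dominated pointwise by a small multiple of $W$, hence absorbed into $\langle W u, u \rangle$ after Cauchy--Schwarz. Since $A_v(W)$ depends only on the coordinate $s = z \cdot v^\perp$, on the range of $\Pi_v$ the operator $-\Delta + i \lambda A_v(W) - \lambda^2$ splits into a family of 1D operators $-\partial_s^2 + i \lambda A_v(W)(s) - E$ parametrized by the squared Fourier variable $E = \lambda^2 - |\xi \cdot v|^2$ in the $v$-direction. At this point \eqref{assumed1dresolventeq} provides the bound by $C_v \rho(\lambda)$, uniformly in $E \in \mathbb R$.

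The main obstacle, and the reason for the constraint $\rho(\lambda) = o(\lambda^{1/3})$, is to iterate the normal form while keeping the errors genuinely small. The conjugating symbol is of size $\lambda^{-1} W$, so second-order commutators produce terms of size $\lambda^{-1} |\partial^2 W| \lesssim \lambda^{-1} W^{1/2}$; when bootstrapped through one resolvent, each round loses a power of $\rho(\lambda)^{1/2} \lambda^{-1/2}$, and the process converges precisely when $\rho(\lambda) \lambda^{-1/3} \to 0$. Once each $\|\Pi_v u\|_{L^2}$ has been estimated by $C_v \rho(\lambda) \|P_\lambda u\|_{L^2}$ plus negligible errors, the global bound follows from
\begin{equation*}
\|u\|_{L^2}^2
    \lesssim \|\Pi_0 u\|_{L^2}^2 + \sum_{v \in \mathcal V} \|\Pi_v u\|_{L^2}^2
\end{equation*}
and the finiteness of $\mathcal V$, with constant $C = C(\max_v C_v, |\mathcal V|)$. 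As indicated in the text, the detailed verification has been carried out in \cite[Theorem~1.12]{Kleinhenz2025}, and we simply invoke it.
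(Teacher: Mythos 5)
Your proposal is correct and ends up doing exactly what the paper does: Proposition~\ref{prop:avgresolve} is not proved here but is quoted verbatim as \cite[Theorem 1.12]{Kleinhenz2025}, which is the citation you invoke at the end. Your preceding sketch of the microlocal sector decomposition and the normal form via the cohomological equation $v \cdot \nabla \phi_v = W - A_v(W)$ is a reasonable outline of how that cited result is established, but it is supplementary to, not a substitute for, the reference that both you and the paper rely on.
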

    %Note that if $v \not\in \mathcal V$, then \eqref{assumed1dresolventeq} holds with $\rho(\lambda)=1/\lambda$, because $A_v(W)(s)>0$ for all $s$.	
	To obtain a 1-d resolvent estimate of the form \eqref{assumed1dresolventeq} for $v \in \mathcal V$,  we combine the estimates of \cite{DatchevKleinhenz2020} and \cite{LeautaudLerner2017}. %(for poly-log damping this would be a combination of \cite[Theorem~1.4 and Theorem 1.5]{Kleinhenz2025}).
	\begin{proposition}\label{prop:1dresolve}
		Suppose $V(s) \in C^{0}(\Sb^1)$ and $V(s)=0$ only on finitely many intervals $[a_j,b_j]$ and at finitely many points $s_j$. Suppose there exist $C_0 \geq 1$ and $\beta_j, \gamma_j >0$ such that 
		\begin{enumerate}
            \item For $s \in \{V>0\}$ in a neighborhood of $[a_j,b_j]$ ,
			\begin{equation}
				C_0^{-1} d_j(s)^{\beta_j} \leq V(s) \leq C_0 d_j(s)^{\beta_j},
			\end{equation}
			where $d_j(s)=\dist(s,[a_j, b_j])$.
			\item For $s \in \{V>0\}$ in a neighborhood of $s_j$,
			\begin{equation}
				C_0^{-1}|s-s_j|^{\gamma_j} \leq V(s) \leq C_0 |s-s_j|^{\gamma_j}.
			\end{equation}
		\end{enumerate}
		Let $\beta'= \min \beta_j$, then there exist $\lambda_0, C>0$, such that for all $E \in \Rb$ and $\lambda \geq \lambda_0$,
		\begin{equation}\label{e:1dbetaest}
			\nm{(-\p_s^2 + i \lambda V(s) - E)^{-1}}_{\Lc(L^2(\Sb^1))} \leq C \lambda^{\frac{1}{\beta'+2}}. %\ln(\lambda)^{-\frac{\gamma}{\beta+2}}.
		\end{equation}
		Furthermore, if $V$ vanishes only at points $s_j$, let $\gamma' = \max \gamma_j$. Then there exist $\lambda_0, C>0$, such that for all $E \in \Rb$ and $\lambda \geq \lambda_0$,
		\begin{equation}
			\nm{(-\p_s^2 + i \lambda V(s) - E)^{-1}}_{\Lc(L^2(\Sb^1))} \leq C \lambda^{-\frac{2}{\gamma'+2}}. %\ln(\lambda)^{-\frac{\gamma}{\beta+2}}.
		\end{equation}
	
%		and there exists $C_0 \geq 1$ such that for $z \in \{V>0\}$, and letting $s = \dist(z, {\mathcal G}(V))$
%		\begin{equation}
%			C_0^{-1} s^{\beta} \leq V(x) \leq C_0 s^{\beta},
%%			C_0^{-1} s^{\beta}\ln(s^{-1})^{-\gamma} \leq V(x) \leq C_0 s^{\beta} \ln(s^{-1})^{-\gamma},
%		\end{equation}
%		then there exists $\lambda_0, C>0$ such that for all $E \in \Rb$
%		\begin{equation}
%			\nm{(-\p_z^2 + i \lambda V(z) - E)^{-1}}_{\Lc(L^2)} \leq C \lambda^{-\frac{1}{\beta+2}}. %\ln(\lambda)^{-\frac{\gamma}{\beta+2}}.
%		\end{equation}
	\end{proposition}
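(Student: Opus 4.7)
My plan is to prove this by a cutoff--commutator argument, reducing to the single-zero resolvent estimates of \cite{DatchevKleinhenz2020} and \cite{LeautaudLerner2017} applied as black boxes. I first introduce a smooth partition of unity on $\Sb^1$,
\begin{equation*}
1 = \chi_0 + \sum_j \chi_j + \sum_k \psi_k,
\end{equation*}
where $\chi_j$ is supported in a small neighborhood of $[a_j, b_j]$ and equals $1$ on a smaller neighborhood, $\psi_k$ is analogous around $s_k$, and $\chi_0$ is supported in $\{V \geq c_0\}$ for some $c_0 > 0$. Since $V$ vanishes only on the $[a_j,b_j]$ and the $s_k$, these neighborhoods can be chosen so that $\operatorname{supp} \chi_j'$ and $\operatorname{supp} \psi_k'$ lie in $\{V \geq c_0'\}$ for some $c_0' > 0$.

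Next, for each $j$ I construct an auxiliary $V_j$ agreeing with $V$ on $\operatorname{supp} \chi_j$ and bounded below by a positive constant elsewhere; such $V_j$ satisfies the hypotheses of \cite{DatchevKleinhenz2020}, so $\|(-\partial_s^2 + i \lambda V_j - E)^{-1}\|_{\Lc(L^2(\Sb^1))} \leq C \lambda^{1/(\beta_j+2)}$ uniformly in $E \in \Rb$. Applied to $\chi_j u$, using $V = V_j$ on $\operatorname{supp} \chi_j$,
\begin{equation*}
\|\chi_j u\| \leq C \lambda^{1/(\beta_j+2)} \bigl(\|Pu\| + \|[P,\chi_j] u\|\bigr).
\end{equation*}
Analogously, via \cite{LeautaudLerner2017} around each $s_k$,  $\|\psi_k u\| \leq C \lambda^{-2/(\gamma_k+2)}(\|Pu\| + \|[P,\psi_k]u\|)$. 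For the bulk piece, $\operatorname{Im} \langle Pu, u\rangle = \lambda \int V|u|^2$ yields $\|\chi_0 u\|^2 \leq c_0^{-1} \lambda^{-1} \|Pu\| \|u\|$.

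The main obstacle is controlling the commutator $[P,\chi_j] u = -2 \chi_j' u' - \chi_j'' u$. The $\chi_j'' u$ piece is supported where $V$ is bounded below, and so bounded via the same imaginary-part identity as $\chi_0 u$; but the derivative piece $\|\chi_j' u'\|$ is delicate, since $\|u'\|$ has no obvious global bound of the right size when $|E|$ is large. I handle it via an auxiliary cutoff $\widetilde \chi_j$ equal to $1$ on $\operatorname{supp} \chi_j'$ and supported in $\{V \geq c_0''\}$, separating cases on $|E|$: for $|E| \gtrsim \lambda^2$ the estimate is essentially elliptic and follows directly from the resolvent bound of $-\partial_s^2 - E$; for $|E| \lesssim \lambda^2$, the real-part identity applied to $\widetilde \chi_j u$, combined with $\|\widetilde \chi_j u\|^2 \leq C \lambda^{-1} \|Pu\|\|u\|$ from the imaginary-part identity, yields $\|\chi_j' u'\| \leq C \lambda^{1/2} (\|Pu\|\|u\|)^{1/2}$. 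The resulting commutator contribution $C \lambda^{1/(\beta_j+2)+1/2}(\|Pu\|\|u\|)^{1/2}$ is absorbed into $\|u\|$ via AM--GM for $\lambda$ large.

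Summing via $\|u\| \leq \|\chi_0 u\| + \sum_j \|\chi_j u\| + \sum_k \|\psi_k u\|$ and setting $\beta' = \min_j \beta_j$ (so $\lambda^{1/(\beta_j+2)} \leq \lambda^{1/(\beta'+2)}$) yields \eqref{e:1dbetaest}. For the second claim, no $\chi_j$ terms appear; taking $\gamma' = \max_k \gamma_k$ gives the $\lambda^{-2/(\gamma'+2)}$ bound, which dominates the bulk contribution $\lambda^{-1}$ for large $\lambda$.
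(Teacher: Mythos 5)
Your overall architecture (partition of unity, auxiliary potentials $V_j$, black-box single-zero estimates from \cite{DatchevKleinhenz2020} and \cite{LeautaudLerner2017}, pairing identities for the bulk term and for $\chi_j'' u$) matches the paper's. The gap is in your treatment of the commutator term $\chi_j' u'$, which you correctly flag as the main obstacle but do not actually resolve. For $|E| \lesssim \lambda^2$ your bound $\|\chi_j' u'\| \le C\lambda^{1/2}(\|Pu\|\|u\|)^{1/2}$ is fine (it agrees with Lemma~\ref{l:pairing} when $E \sim \lambda^2$), but the resulting contribution $C\lambda^{1/(\beta_j+2)+1/2}(\|Pu\|\|u\|)^{1/2}$ cannot be ``absorbed into $\|u\|$'' at the claimed cost: any splitting $ab \le \epsilon b^2 + C_\epsilon a^2$ that absorbs the $\|u\|^{1/2}$ factor necessarily squares the $\lambda$-prefactor onto $\|Pu\|$, producing $C_\epsilon \lambda^{1+2/(\beta_j+2)}\|Pu\| + \epsilon \|u\|$ and hence a resolvent bound of order $\lambda^{1+2/(\beta'+2)}$, far weaker than the claimed $\lambda^{1/(\beta'+2)}$. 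The absorption only closes when $(1+E^{1/2})\lambda^{-1/2}\cdot \lambda^{1/(\beta'+2)} \le \lambda^{1/(2(\beta'+2))}$, i.e.\ for $E \le \lambda^{(\beta'+1)/(\beta'+2)}$ (and $E \le \lambda^{(\gamma'+4)/(\gamma'+2)}$ in the point case). This is where the case split must be placed, not at $|E|\sim\lambda^2$.

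Above that threshold your argument also breaks: for positive $E$, the operator $-\p_s^2 - E$ on $L^2(\Sb^1)$ has real spectrum accumulating at $+\infty$, so $E$ can sit on or arbitrarily near an eigenvalue and there is no uniform ``resolvent bound of $-\p_s^2 - E$'' to perturb from (nor can one run a Neumann series in $i\lambda V$, since $V$ vanishes on a nonempty set). The missing ingredient in the regime $E \ge \lambda^{(\beta'+1)/(\beta'+2)}$ is a one-dimensional propagation/observability estimate of the form $\|\chi_j u\| \le C E^{-1/2}\|\cdots\| + C\|V^{1/2}u\|$ (as in \cite[Lemma 6.6]{Sun23} or \cite[Prop 4.2]{Burq2020}), combined with the pairing bound $\|V^{1/2}u\| \le \lambda^{-1/2}|\langle f,u\rangle|^{1/2}$; the factor $E^{-1/2} \le \lambda^{-(\beta'+1)/(2(\beta'+2))}$ then supplies exactly the gain needed to recover the power $\lambda^{1/(\beta'+2)}$ after Young's inequality. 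Without the correct threshold and without this propagation estimate for large $E$, the proof does not yield the stated powers of $\lambda$.
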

	\begin{remark}
		Note that 
		\begin{equation}
			-\frac{2}{\gamma'+2}<0 < \frac{1}{\beta'+2}.
		\end{equation}
		So a damping vanishing only at points satisfies a stronger resolvent estimate than any damping vanishing on an interval. This is why \eqref{e:1dbetaest} is independent of $\gamma'$. 
	\end{remark}

%	When the damping only vanishes at individual points, there is an analogous version of this result. Although note that we take the largest $\gamma_j$, as larger $\gamma$ produce worse resolvent estimates in this case.
%	
%		\begin{proposition}\label{prop:1dresolvepoints}
%		Suppose $V(z) \in C^{0}(\Sb^1)$ and $V(z)=0$, only at finitely many points $z_j$. Suppose there exists $C_0 \geq 1$ and $\gamma_j >0$ such that, for $z \in \{V>0\}$ in a neighborhood of $z_j$
%			\begin{equation}
%				C_0^{-1}|z-z_j|^{\gamma_j} \leq V(z) \leq C_0 |z-z_j|^{\gamma_j}.
%			\end{equation}
%		Let $\gamma= \max \gamma_j$, then there exists $\lambda_0, C>0$, such that for all $E \in \Rb$ and $\lambda \geq \lambda_0$
%		\begin{equation}
%			\nm{(-\p_z^2 + i \lambda V(z) - E)^{-1}}_{\Lc(L^2)} \leq C \lambda^{-\frac{2}{\gamma+2}}. %\ln(\lambda)^{-\frac{\gamma}{\beta+2}}.
%		\end{equation}
%	\end{proposition}
	
	In light of the above propositions, to prove Theorem \ref{thm:suff} it is enough to show that averaging $W$ along directions $v \in \mathcal V$ improves the polynomial power on $d$ from $\beta$ (resp.\ $\gamma$) to $\beta_v \leq \beta'$ at glancing points along one-sided glancing lines, (resp.\ to $\gamma_v \geq \gamma'$ at glancing points along two-sided glancing lines). That is, it will be enough to prove the following proposition which controls the average of the damping function $W$ along directions $v \in \mathcal V$.

	\begin{proposition}\label{prop:order}
		Under the  assumptions of Theorem \ref{thm:suff}, and recalling the definitions of $\gamma, \gamma_v, \beta,$ and $ \beta_v$ there, for each $v \in \mathcal V$, there exists $C_0 \geq 1$ such that 
		\begin{enumerate}
			\item $A_v(W)(s)=0$ only on finitely many intervals $[\alpha_j,\rho_j]$ and at finitely many points $s_j$. Furthermore, if $\Lc_1(v)=\varnothing$, then $A_v(W)$ vanishes only at points $s_j$.
			\item There exists a neighborhood of each $[\alpha_j,\rho_j]$ such that
			\begin{equation}\label{eq:ordergrowavg2}
				C_0^{-1} d_j(s)^{\beta_v} \leq A_v(W)(s) \leq C_0 d_j(s)^{\beta_v},
				%			C_v^{-1} s^{\beta+\frac{1}{\eta}} \ln(s^{-1})^{-\gamma} \leq A_v(W)(s) \leq C_v s^{\beta+\frac{1}{\eta}} \ln(s^{-1})^{-\gamma}.
			\end{equation}
			where $d_j(s)= \dist(s, [\alpha_j,\rho_j])$.
            \item  There exists a neighborhood of each $s_j$ such that
			\begin{equation}\label{eq:ordergrowavg1}
				C_0^{-1} |s-s_j|^{\gamma_v} \leq A_v(W)(s) \leq  C_0 |s-s_j|^{\gamma_v}.
			\end{equation}

		\end{enumerate}

	\end{proposition}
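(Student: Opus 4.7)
I would first observe $A_v(W)(s) = 0$ iff $L_s := L_{sv^\perp, v}$ is disjoint from $\omega$, so the zero set $Z_v$ is closed in $\Sb^1$. Any boundary point $s_0$ of $Z_v$ satisfies $L_{s_0} \cap \omega = \varnothing$ while nearby $s'$ have $L_{s'} \cap \omega \ne \varnothing$; by continuity $L_{s_0} \cap \partial \omega \ne \varnothing$, so $L_{s_0}$ is glancing. Endpoints of positive-length components then correspond to one-sided glancing lines (shifting into the component keeps us out of $\omega$), and isolated-point components to two-sided glancing lines. Since every glancing point has an order, Remark \ref{r:finiteG} gives $\#\Gc < \infty$, so there are only finitely many glancing lines in direction $v$, and hence finitely many components of $Z_v$ — the intervals $[\alpha_j, \rho_j]$ and isolated points $s_j$ of item (1). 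If $\Lc_1(v) = \varnothing$, no one-sided glancing lines are available, so $Z_v$ reduces to the finite set $\{s_j\}$.

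\textbf{Step 2 (Local asymptotics at each glancing point).} For $s$ just outside an interval with endpoint $\alpha_j$, the integrand $W(sv^\perp + tv)$ is supported only in small neighborhoods of the finite set $L_{\alpha_j} \cap \Gc(v)$, so I will split $A_v(W)(s) = \sum_p I_p(s)$ over those glancing points. Fix such a $p$ and pass to the chart $(\psi, U)$ of Definition \ref{def:order}: $p$ maps to $0$, $L_{\alpha_j}$ becomes the $y$-axis, and $L_s \cap U$ is $\{x = x(s)\}$ with $|x(s)| \asymp |s - \alpha_j|$. By \eqref{e:order-1-sided}, the slice $\omega \cap \{x = x(s)\}$ is an interval (or two) in $y$ of total length comparable to $|x(s)|^{1/\eta}$. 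I will show that on this slice
\begin{equation*}
d(x(s), y) \asymp |x(s)|^{1/\min(\eta, 1)} \cdot \phi\!\left(|x(s)|^{-1/\eta} y\right),
\end{equation*}
for some bounded continuous $\phi \ge 0$ vanishing precisely at the endpoints of the rescaled slice. Substituting $y = |x(s)|^{1/\eta} u$ and using $W \asymp d^\beta$ then gives
\begin{equation*}
I_p(s) \asymp |x(s)|^{1/\eta + \beta/\min(\eta, 1)} \int \phi(u)^\beta\, du \asymp |s - \alpha_j|^{\beta_v}.
\end{equation*}
Summing over the finitely many glancing points on $L_{\alpha_j}$, which all share $\beta_v$ by assumption (1) of Theorem \ref{thm:suff}, yields \eqref{eq:ordergrowavg2}. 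The identical analysis applied on both sides of an isolated two-sided zero $s_j$, using \eqref{e:order-2-sided} and assumption (2), produces \eqref{eq:ordergrowavg1}.

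\textbf{Main obstacle.} The crux is justifying the scaling estimate for $d$ inside the slice, because Definition \ref{def:order} only pins $\partial \omega$ between the comparable cusps $\partial \mathcal F_\eta$ and $\partial \Omega_\eta$ rather than describing it explicitly. I plan to bound $d$ above by the distance to $\partial \mathcal F_\eta$ and below by the distance to $\partial \Omega_\eta$ and check that both scale identically. For $\eta \ge 1$ the boundary curves $|y|^\eta = c|x|$ are nearly vertical near the origin, so the Euclidean distance from an interior slice point is dominated by the horizontal distance and scales like $|x(s)|$; for $\eta \le 1$ they are nearly horizontal and the distance is dominated by the vertical distance, scaling like $|x(s)|^{1/\eta}$. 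Both cases are captured by the displayed formula, after which the remainder is a routine scaling integral producing exactly $\beta_v = \beta/\min(\eta, 1) + 1/\eta$ (and analogously $\gamma_v$).
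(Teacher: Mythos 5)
Your proposal follows essentially the same route as the paper: part (1) via finiteness of $\mathcal G$ (Remark \ref{r:finiteG}) plus the identification of interval endpoints with one-sided glancing lines, and parts (2)--(3) via localization to chart neighborhoods of the glancing points on $L_{\alpha_j}$ or $L_{s_j}$ and a rescaling of the slice integral, which is exactly what the paper's Lemma \ref{l:WlocAvg} does (implemented there through an explicit distance estimate to the curves $x = c|y|^\eta$ and a change-of-variables integral identity).

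Three points in your write-up need repair, though none is fatal. First, the sandwich is stated backwards: since $\mathcal F_\eta \subset \psi(\omega\cap U) \subset \Omega_\eta$, the complement of $\Omega_\eta$ sits inside $\psi(\omega^c\cap U)$, so $d$ is bounded \emph{above} by the distance to $\Omega_\eta^c$ and \emph{below} by the distance to $\mathcal F_\eta^c$. Second, because of this, a single profile $\phi$ with a two-sided $\asymp$ cannot exist: the upper-bound profile vanishes at the endpoints of the $\Omega_\eta$-slice, while the lower-bound profile is the distance to the complement of the thin wedge $C_{in}^{-1}|y|^\eta \le |x| \le C_{in}|y|^\eta$, a ``tent'' that is a minimum of distances to two curves and vanishes at the (strictly smaller) $\mathcal F_\eta$-endpoints. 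You must run the upper and lower bounds with separate profiles; both still integrate to a constant times $|x(s)|^{\beta/\min(\eta,1)+1/\eta}$, which is all you need, but the displayed $\asymp$ as written is false. Third, in the two-sided case \eqref{eq:ordergrowavg1} the glancing points on $L_{s_j}$ may individually be one-sided, so for a fixed $s$ on one side of $s_j$ some of the local integrals $I_p(s)$ vanish identically; the upper bound is unaffected, but the lower bound requires observing that at least one $p$ contributes for each $s \ne s_j$ (which follows from $A_v(W)>0$ on a punctured neighborhood, established in part (1)). With these adjustments your argument coincides with the paper's.
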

	With these propositions we now prove Theorem \ref{thm:suff}.
		\begin{proof}[Proof of Theorem \ref{thm:suff}]
			We separately address the cases where $\mathcal{L}_1$ is nonempty or empty.
			 
			1)  If   $\mathcal{L}_1 \ne \varnothing$ , then Proposition \ref{prop:order} shows that for each $v \in \mathcal V$, $A_v(W)$ satisfies the hypotheses of Proposition \ref{prop:1dresolve} with  $\gamma_j=\gamma_v$ and $\beta_j=\beta_v$. Therefore for all $v \in \mathcal V$, there exist $C_v, \lambda_v >0$, such that for $\lambda \geq \lambda_v$, and for all $E \in \Rb$,
					\begin{equation}
				\nm{(-\p_s^2 + i \lambda A_v(W)(s) - E)^{-1}}_{\Lc(L^2)} \leq C_v \lambda^{\frac{1}{\beta_v+2}} \leq C_v \lambda^{\frac{1}{\beta'+2}}, %\ln(\lambda)^{-\frac{\gamma}{\beta+\frac{1}{\eta}+2}}.
			\end{equation}
            where the final inequality holds because  $\min \beta_v = \beta'$. 
            %\Ped{When applying Proposition 2.3, we do so on an individual direction, which generates the resolvent estimate with $\beta_v$ (because all of the $\beta_j=\beta_v$ for a fixed direction $v$), and then we control that with $\beta$.}
			Then, by Proposition \ref{prop:avgresolve}, \eqref{e:resest} holds with $\alpha = 1 - \frac{1}{\beta'+3}$, and this completes the proof.
			
			2) If    $\mathcal{L}_1 = \varnothing$, then Proposition \ref{prop:order} shows that for all $v \in \mathcal V$, $A_v(W)$ satisfies the hypotheses of the second part of Proposition \ref{prop:1dresolve} with  $\gamma_j=\gamma_v$. Therefore for all $v \in \mathcal V$, there exist $C_v, \lambda_v >0$, such that for $\lambda \geq \lambda_v$, and for all $E \in \Rb$,
			\begin{equation}
				\nm{(-\p_s^2 + i \lambda A_v(W)(s) - E)^{-1}}_{\Lc(L^2)} \leq C_v \lambda^{-\frac{2}{\gamma_v+2}} \leq C_v \lambda^{-\frac{2}{\gamma'+2}}, %\ln(\lambda)^{-\frac{\gamma}{\beta+\frac{1}{\eta}+2}}.
			\end{equation}
            where the final inequality holds because $\max \gamma_v=\gamma'$.
			Then, by Proposition \ref{prop:avgresolve}, \eqref{e:resest} holds with $\alpha = 1+\frac{2}{\gamma'}$, and this completes the proof.
		\end{proof}

	\section{Proof of 1d combination results, Proposition \ref{prop:1dresolve}}
	For the proof we need the following two consequences of a pairing argument.
	\begin{lemma}\label{l:pairing}
		For $u \in L^2(\Sb^1)$ and $\lambda, E \in \Rb$ define 
		\begin{equation}
			f:=(-\p_s^2+i\lambda V(s)-E)u.
		\end{equation}
		For any $E \in \Rb, \lambda>0$, then
		\begin{equation}
			\ltwo{V^{1/2}u} \leq \lambda^{-\frac{1}{2}} |\<f,u\>|^{1/2}.
		\end{equation}
		Also, for any nonnegative $\psi \in C^{\infty}(\Sb^1)$ which vanishes on a neighborhood of $\{V=0\}$, there exists a $C>0$, such that for any $E \in \Rb, \lambda>0$ then
		\begin{equation}
			\ltwo{\psi^{1/2} \p_s u} \leq C(1+\max(0,E)^{1/2}) \lambda^{-\frac{1}{2}} |\<f,u\>|^{1/2} + C \ltwo{f}.
		\end{equation}
	\end{lemma}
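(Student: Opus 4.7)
The plan is to prove both estimates by $L^2$ pairings of $f$ against suitable test functions, using integration by parts on $\Sb^1$ so that no boundary terms appear.

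For the first estimate, I pair $f$ with $u$. One integration by parts gives
\begin{equation*}
\<f,u\> = \ltwo{\p_s u}^2 + i\lambda \ltwo{V^{1/2} u}^2 - E \ltwo{u}^2 ,
\end{equation*}
and taking imaginary parts yields $\lambda \ltwo{V^{1/2} u}^2 = \Im \<f,u\> \le |\<f,u\>|$, which is the first bound.

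For the second estimate, I pair $f$ with $\psi u$. Integration by parts in the $-\p_s^2$ term produces $\int \psi |\p_s u|^2 + \int \psi' (\p_s u) \bar u$; using $\Re((\p_s u)\bar u) = \tfrac12 \p_s(|u|^2)$ and a further integration by parts converts the real part of the cross term into $-\tfrac12 \int \psi'' |u|^2$, so
\begin{equation*}
\Re \<f, \psi u\> = \int \psi |\p_s u|^2 - \frac{1}{2} \int \psi'' |u|^2 - E \int \psi |u|^2 .
\end{equation*}
Rearranging bounds $\int \psi |\p_s u|^2$ by $|\<f, \psi u\>| + C \int_{\supp \psi''} |u|^2 + \max(0,E) \int \psi |u|^2$. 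Since $\psi$ vanishes in a neighborhood of $\{V=0\}$, so does $\psi''$, and there exists $c > 0$ with $V \ge c$ on $\supp \psi \cup \supp \psi''$. Both integrals of $|u|^2$ are therefore bounded by $C\ltwo{V^{1/2}u}^2 \le C\lambda^{-1}|\<f,u\>|$ via the first estimate. Meanwhile Cauchy--Schwarz combined with $\psi \le \|\psi\|_\infty$ gives $|\<f, \psi u\>| \le C\ltwo{f}\ltwo{\psi^{1/2} u}$, and applying the first estimate to $\psi^{1/2} u$ (again using $V \ge c$ on $\supp \psi$) yields $\ltwo{\psi^{1/2} u} \le C\lambda^{-1/2}|\<f,u\>|^{1/2}$.

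Assembling these bounds,
\begin{equation*}
\int \psi |\p_s u|^2 \le C\ltwo{f}\lambda^{-1/2}|\<f,u\>|^{1/2} + C(1+\max(0,E))\lambda^{-1}|\<f,u\>| ,
\end{equation*}
and taking square roots together with $\sqrt{ab} \le \tfrac12(a+b)$ applied to the cross term $\ltwo{f}^{1/2}\lambda^{-1/4}|\<f,u\>|^{1/4}$ produces the claimed inequality. The only delicate point is that $\int \psi'' |u|^2$ cannot be controlled by $\ltwo{V^{1/2}u}^2$ merely from $\psi''$ vanishing on $\{V=0\}$; vanishing in a \emph{full neighborhood} of $\{V=0\}$, as hypothesized, is what makes $\supp \psi''$ stay bounded away from $\{V = 0\}$ and removes this obstacle.
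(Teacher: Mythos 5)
Your proof is correct and follows essentially the same route as the paper's: pairing with $\bar u$ and taking imaginary parts for the first bound, then the identity $\int \psi |u'|^2 = \tfrac12\int \psi''|u|^2 + E\int\psi|u|^2 + \Re\int \psi f \bar u$ together with $\psi, |\psi''| \lesssim V$ and the first bound for the second. The only (immaterial) difference is the order in which Young's inequality and the first estimate are applied to the $\Re\int\psi f\bar u$ term.
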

	\begin{proof}
		The first part follows by multiplying $(-\p_s^2+i\lambda V(s)-E)u=f$ by $\bar{u}$. Then integrate by parts and take the imaginary part.
		
		To prove the second part we consider the left hand side, integrate by parts twice, and use the equation to obtain
		\begin{align}
			\int \psi|u'|^2 ds  &= - \Re \int \psi' u' \bar{u} ds  - \Re \int \psi u'' \bar{u} ds \\
			&=\frac{1}{2} \int \psi''|u|^2 + E \int \psi |u|^2  ds + \Re \int \psi f \bar{u} ds.
		\end{align}
		Now use that $\psi \leq CV$, $|\psi''| \le CV$, and H\"older's inequality to write 
		\begin{equation}
			\int \psi|u|^2 ds \leq C (1+\max(0,E)) \int V |u|^2 ds + C \left(\int |f|^2 ds \right)^{1/2} \left( \int V |u|^2 ds \right)^{1/2}  \nm{\psi}_{L^\infty}^{1/2} .
		\end{equation} 
		Then apply Young's inequality for products to the second term, then apply part 1 and take square roots to conclude.
	\end{proof}
	
	We now prove Proposition \ref{prop:1dresolve}. The idea of the proof is to use a partition of unity to study damping only vanishing on individual intervals $[a_j, b_j]$ or points $s_j$ and then obtain resolvent estimates using \cite{DatchevKleinhenz2020} or \cite{LeautaudLerner2017} and a standard 1-d propagation estimate. 
	\begin{proof}[Proof of Proposition \ref{prop:1dresolve}]

	1) Order the finite intervals $[a_j,b_j]$ and zeroes $s_j$ in a single list of length $N$. Then let $\chi_j$ be a partition of unity on $\Sb^1$, such that $\chi_k$ is identically 1 on the $k$th element of the list $\{ [a_j,b_j], s_j\}$, and for some $\e>0$, $\supp \chi_{N+1} \subset \{V \geq \e\}$.
	
	Let $u,f$ solve $(-\p_s^2+i\lambda V(s)-E)u=f$. Then, by Lemma \ref{l:pairing},
	\begin{equation}
		\ltwo{\chi_{N+1} u} \leq \frac{1}{\e^{1/2}} \ltwo{V^{1/2} u} \leq \frac{C}{{\sqrt{\lambda}}} |\<f,u\>|^{1/2}. 
	\end{equation}
	Applying Young's inequality for products, we have for any $\d>0$
	\begin{equation}
		\ltwo{\chi_{N+1}u} \leq \frac{C}{{\lambda} \d} \ltwo{f} + C\d \ltwo{u}.
	\end{equation}
	
	We now separately consider those $j$ associated to intervals $[a_j, b_j]$ or single points $s_j$. Each of these cases will be further split into sub-cases based on the size of $E$. The technique is the same for intervals and points, but the constants involved are different. 
	
	Before doing so we make a common definition. 
	\begin{equation}
		V_j(s) = \begin{cases}
			V(s), \quad & \text{ for } s\in \supp(\chi_j),\\
			\e, \quad &\text{ otherwise}.
		\end{cases}
	\end{equation}
	Note that $V \chi_j = V_j \chi_j$, and if $s_j \in \supp \chi_j$ then $V_j$ satisfies the hypotheses of \cite{LeautaudLerner2017}. Additionally if $[a_j,b_j] \subset \supp \chi_j$, then $V_j$ satisfies the hypotheses of \cite{DatchevKleinhenz2020}. Furthermore 
	\begin{align}
		(-\p_s^2 + i \lambda V_j(s) -E) \chi_j u &= \chi_j(-\p_s^2 + i \lambda V (s) -E) u + [-\p_s^2, \chi_j]u \\
		&= \chi_j f - \chi_j'' u - 2\chi_j' \p_s u.\label{eq:1dchijSDWE}
	\end{align}
    2) For intervals $[a_j, b_j]$ we consider separately the cases $E \leq \lambda^{\frac{\beta'+1}{\beta'+2}}$ and $E \geq \lambda^{\frac{\beta'+1}{\beta'+2}}$.
	
	2a) Assume $E \leq \lambda^{\frac{\beta'+1}{\beta'+2}}$. 
 Then by \cite[equation (10)]{DatchevKleinhenz2020}, there exists $C>0$ such that
	\begin{equation}
		\ltwo{\chi_j u} \leq C \lambda^{\frac{1}{\beta_j+2}}\ltwo{\chi_j f {-} \chi_j'' u {-} 2\chi_j' \p_s u}.
	\end{equation}
		Now since $\beta'=\min \beta_j$, note that $\lambda^{\frac{1}{\beta_j+2}} \leq \lambda^{\frac{1}{\beta'+2}}$. Therefore applying Lemma \ref{l:pairing}, since $|\chi_j''| \leq CV$ and $\chi_j'=0$ on a neighborhood of $V=0$, we have
	\begin{align}
		\ltwo{\chi_j u} &\leq C \lambda^{\frac{1}{\beta'+2}} \left(\ltwo{\chi_j f} + \ltwo{\chi_j'' u} + 2 \ltwo{\chi_j' \p_s u} \right)\\
		&\leq C \lambda^{\frac{1}{\beta'+2}}\ltwo{f} + C (\lambda^{\frac{1}{\beta'+2}-\frac{1}{2}} + \lambda^{\frac{1}{\beta'+2}-\frac{1}{2}} \max(0,E)^{1/2})|\<f,u\>|^{1/2}\\
		&\leq C \lambda^{\frac{1}{\beta'+2}} \ltwo{f} + C \lambda^{\frac{1}{2(\beta'+2)}} |\<f,u\>|^{1/2} ,
	\end{align}
	where the third inequality uses that $E \leq \lambda^{\frac{\beta'+1}{\beta'+2}}$ and that $\lambda$ is large. Then using Young's inequality for products we have, for any $\d>0$
	\begin{align}
		\ltwo{\chi_j u} &\leq \frac{C}{\delta} \lambda^{\frac{1}{\beta'+2}} \ltwo{f} + \d \ltwo{u}.\label{eq:intervalcombineEsmall}
	\end{align}
	2b) Assume $E \geq \lambda^{\frac{\beta'+1}{\beta'+2}}$. 
    Rewrite equation \eqref{eq:1dchijSDWE} as 
	\begin{equation}
		(-\p_s^2-E) \chi_j u= \chi_j f + \chi_j'' u - 2 \p_s(\chi_j' u) - i\lambda V \chi_j u.
	\end{equation}
    Then by a standard 1-d propagation estimate, see \cite[Lemma 6.6]{Sun23} or \cite[Prop 4.2]{Burq2020}, and using that $|\chi_j'| + |\chi_j''| \leq CV$, there exists $C>0$ such that 
	\begin{align}
		\ltwo{\chi_j u} &\leq  C E^{-1/2} \ltwo{f+\chi_j'' u  - i\lambda V \chi_j u} + \hp{\p_s (\chi_j' u)}{-1} + \ltwo{Vu}\\
		&\leq C E^{-1/2} \ltwo{f} + C(\lambda E^{-1/2} +1) \ltwo{V^{1/2} u}.\label{eq:highE1dest}
	\end{align} 
    Note that to obtain \eqref{eq:highE1dest} we did not use the form of $V$ near $[a_j, b_j]$ nor the exact size of $E$.
    
	Now since $E \geq \lambda^{\frac{\beta'+1}{\beta'+2}}$, $E^{-1/2} \leq \lambda^{-\frac{\beta'+1}{2(\beta'+2)}}$, and applying Lemma \ref{l:pairing} we obtain 
	\begin{align}
		\ltwo{\chi_j u} & \leq C \ltwo{f} + C \lambda^{1-\frac{\beta'+1}{2(\beta'+2)}} \ltwo{V^{1/2}u}\\
		%&\leq C \ltwo{f} + C \lambda^{\frac{1}{2}-\frac{\beta+1}{2(\beta+2)}}|\<f,u\>|^{1/2} \\
		&\leq C \ltwo{f} + C \lambda^{\frac{1}{2(\beta'+2)}} |\<f,u\>|^{1/2}.
	\end{align}
	Then applying Young's inequality for products, for any $\d>0$ we have 
	\begin{equation}\label{eq:intervalcombineElarge}
		\ltwo{\chi_j u} \leq \frac{C}{\d} \lambda^{\frac{1}{\beta'+2}} \ltwo{f} + \d \ltwo{u}.
	\end{equation}
	2c) Now by \eqref{eq:intervalcombineEsmall} and \eqref{eq:intervalcombineElarge} for all $E \in \Rb$, if $V$ vanishes on $[a_j, b_j]$, then, for any $\d>0$,
	\begin{equation}
		\ltwo{\chi_j u} \leq \frac{C}{\d} \lambda^{\frac{1}{\beta'+2}} \ltwo{f} + \d \ltwo{u}. \label{eq:intervalchijest}
	\end{equation}
	3) For single points $s_j$ we consider separately the cases $E \leq \lambda^{\frac{\gamma'+4}{\gamma'+2}}$, and $E \geq \lambda^{\frac{\gamma'+4}{\gamma'+2}} $.
	
	3a) Assume $E \leq \lambda^{\frac{\gamma'+4}{\gamma'+2}}$. Applying \cite[Theorem 1.7]{LeautaudLerner2017} to \eqref{eq:1dchijSDWE} there exists $C>0$
	\begin{equation}
		\ltwo{\chi_j u} \leq C \lambda^{-\frac{2}{\gamma_j+2}} \ltwo{\chi_j f {-} \chi_j'' u  {-} 2\chi_j' \p_s u}
	\end{equation}
	Now because $\gamma'=\max \gamma_j$, we have $\lambda^{-\frac{2}{\gamma_j+2}} \leq \lambda^{-\frac{2}{\gamma'+2}}$. Therefore applying Lemma \ref{l:pairing}, since $|\chi_j''| \leq CV$ and $\chi_j'=0$ on a neighborhood of $\{V=0\}$, we have
	\begin{align}
		\ltwo{\chi_j u} &\leq C \lambda^{-\frac{2}{\gamma'+2}} \left(\ltwo{\chi_j f} + \ltwo{\chi_j'' u} + 2 \ltwo{\chi_j' \p_s u} \right)\\
		&\leq C \lambda^{-\frac{2}{\gamma'+2}}\ltwo{f} + C (\lambda^{-\frac{2}{\gamma'+2}-\frac{1}{2}} + \lambda^{-\frac{2}{\gamma'+2}-\frac{1}{2}} \max(0,E)^{1/2})|\<f,u\>|^{1/2}\\
		&\leq C \lambda^{-\frac{2}{\gamma'+2}} \ltwo{f} + C \lambda^{-\frac{1}{\gamma'+2}} |\<f,u\>|^{1/2}.
	\end{align}
	Where the third inequality uses that $E \leq \lambda^{\frac{\gamma'+4}{\gamma'+2}}$. Then using Young's inequality for products we have, for any $\d>0$
	\begin{align}
		\ltwo{\chi_j u} &\leq \frac{C}{\delta} \lambda^{-\frac{2}{\gamma'+2}} \ltwo{f} + \d \ltwo{u}.\label{eq:pointcombineEsmall}
	\end{align}
	3b) Assume $E \geq \lambda^{\frac{\gamma'+4}{\gamma'+2}}$. 
    Following the same argument as in case 2b) we obtain \eqref{eq:highE1dest}
	\begin{align}
		\ltwo{\chi_j u} &\leq C E^{-1/2} \ltwo{f} + C(\lambda E^{-1/2} +1) \ltwo{V^{1/2} u}.
	\end{align} 
	% Then by a standard 1-d propagation estimate, c.f. \cite[Lemma 6.6]{Sun23} or \cite[Prop 4.2]{Burq2020}, and using that $|\chi_j'| + |\chi_j''| \leq CV$, there exists $C>0$ such that 
	% \begin{align}
	% 	\ltwo{\chi_j u} &\leq  C E^{-1/2} \ltwo{f+\chi_j'' u + - i\lambda V \chi_j u} + \hp{\p_s (\chi_j' u)}{-1} + \ltwo{Vu}\\
	% 	&\leq C E^{-1/2} \ltwo{f} + C(\lambda E^{-1/2} +1) \ltwo{V^{1/2} u}.\label{eq:highE1dest}
	% \end{align} 
	Now since $E \geq \lambda^{\frac{\gamma'+4}{\gamma'+2}}$, $E^{-1/2} \leq \lambda^{-\frac{\gamma'+4}{2(\gamma'+2)}}\leq \lambda^{-\frac{2}{\gamma'+2}}$, and applying Lemma \ref{l:pairing} we obtain 
	\begin{align}
		\ltwo{\chi_j u} & \leq \lambda^{-\frac{2}{\gamma'+2}}\ltwo{f} + C \lambda^{1-\frac{\gamma'+4}{2(\gamma'+2)}} \ltwo{V^{1/2}u}\\
		%&\leq \lambda^{-\frac{2}{\gamma+2}}\ltwo{f} + C \lambda^{\frac{\gamma}{2(\gamma+2)}} \ltwo{V^{1/2}u}\\
		&\leq C \lambda^{-\frac{2}{\gamma'+2}} \ltwo{f} + C \lambda^{-\frac{1}{\gamma'+2}} |\<f,u\>|^{1/2}.
	\end{align}
	Then applying Young's inequality for products, for any $\d>0$ we have 
	\begin{equation}\label{eq:pointcombineElarge}
		\ltwo{\chi_j u} \leq \frac{C}{\d} \lambda^{-\frac{2}{\gamma'+2}} \ltwo{f} + \d \ltwo{u}.
	\end{equation}
	3c) Combining \eqref{eq:pointcombineEsmall} and \eqref{eq:pointcombineElarge}, for all $E \in \Rb$ if $V$ vanishes at $s_j$ then for any $\d>0$ we have
	\begin{equation}
		\ltwo{\chi_j u} \leq \frac{C}{\d} \lambda^{-\frac{2}{\gamma'+2}} \ltwo{f} + \d \ltwo{u}. \label{eq:pointchijest}
	\end{equation}
	4) We now combine the cases together to obtain the two resolvent estimates. First, we assume that $V$ vanishes on at least one interval $[a,b]$.
	Combining \eqref{eq:intervalchijest} and \eqref{eq:pointchijest}, and noting that $\lambda^{-\frac{2}{\gamma'+2}}\leq \lambda^{\frac{1}{\beta'+2}}$, then we have for any $\d>0$
	\begin{align}
		\ltwo{u} &\leq \sum_{j=1}^N \ltwo{\chi_j u} + \ltwo{\chi_{N+1} u} \\
		&\leq \frac{C}{\d} (\lambda^{\frac{1}{\beta'+2}} + \lambda^{-\frac{2}{\gamma'+2}} )\ltwo{f} + C \d \ltwo{u}\\		
		&\leq \frac{C}{\d} \lambda^{\frac{1}{\beta'+2}} \ltwo{f} + C \d \ltwo{u}.
	\end{align}
	Choosing $\d>0$ small enough, we can absorb the final term on the right-hand side back to obtain the first resolvent estimate. 
	
	Now assume that $V$ vanishes only at points $s_j$. Then applying \eqref{eq:pointchijest},  for any $\d>0$,
	\begin{align}
			\ltwo{u} &\leq \sum_{j=1}^N \ltwo{\chi_j u} + \ltwo{\chi_{N+1} u} \\
			&\leq \frac{C}{\d} \lambda^{-\frac{2}{\gamma'+2}} \ltwo{f} + C \d \ltwo{u}.
	\end{align}
	Choosing $\d>0$ small enough, we can absorb the final term on the right-hand side back to obtain the second resolvent estimate. 
	\end{proof}

    \section{Proof of Proposition \ref{prop:order}}

    Our approach is related to that of \cite[Proposition 4.4]{Sun23} and \cite[Lemmas 6.1, 6.2, 6.3]{Kleinhenz2025}, but our geometric setup is more general.  The results \cite[Proposition 4.4]{Sun23} and \cite[Lemma 6.2]{Kleinhenz2025} apply only to $\omega$ locally strictly convex with positive curvature, which is a special case of all points of $\mathcal G$ having order 2. The results in \cite[Lemmas 6.1, 6.3]{Kleinhenz2025} apply to $\omega$ exactly equal to a rectangle or a super-ellipse with $\eta \geq 2$.

    We use $(s,t)$ coordinates to represent a point $z=sv^{\bot}+tv$ on $\Tb^2$. We  write 
    \begin{equation}
        W(s,t)=W(sv^{\bot}+tv).
    \end{equation}
    Recall the definition of order, Definition \ref{def:order}; for each point $z_0 \in \mathcal{G}(v)$, we can relate $(s,t)$ coordinates on $\T^2$ (where $(s_0,t_0)=z_0$) to $(x,y)$ coordinates in $\Rb^2$ via $\psi$. %Specifically 
%        \begin{equation}
%            \psi(s_0,t_0)=(0,0).
%        \end{equation}

    Equation \eqref{e:psil} implies that $\psi'(z)$ is a lower triangular matrix, i.e.\ the directional derivative of $x$ in the direction $v$ is zero; $D_v x = 0$.
    Since $\psi$ is affine, we can write 
    \begin{equation}\label{eq:psicoords}
            \psi(s,t)
            = \begin{pmatrix}
                a(s-s_0) \\
                b (s-s_0)+c(t-t_0) \end{pmatrix}
            = \begin{pmatrix} x \\ y \end{pmatrix} .
    \end{equation}
    We now prove the three parts of Proposition \ref{prop:order} one at a time.
    % by proving the three statements as separate Lemmas, specifically Lemmas \ref{l:averagingZeros}, \ref{l:averagingGrowthInterval}, and \ref{l:averagingGrowthPoint}.           
%     We first show that $A_v(W)(s)=0$ only on finitely many intervals $[\alpha_j, \rho_j]$ and at finitely many points $s_j$.
%        \begin{lemma}\label{l:averagingZeros}
%            Under the %geometric 
%            assumptions in Theorem \ref{thm:suff}, for each $v \in \mathcal{V}$, $A_v(W)(s)=0$ only on finitely many intervals $[\alpha_j, \rho_j]$ and at finitely many points $s_j$. Furthermore if $\mathcal{G}_1(v)$ is empty, then $A_v(W)$ vanishes only at points $s_j$.
%        \end{lemma}
    \begin{proof}[Proof of Proposition \ref{prop:order}(1)]
%and that $\Omega:=\supp(W)$ has $K$ connected components $\Omega_1, \ldots, \Omega_K$. We will call $W|_{\Omega_k}=W_k$. %and can further assume that if $p \in {\mathcal G}(W_k) \cap L$ for $L$ a one-sided glancing line of ${\mathcal G}(W_j)$, then $p$ has order $\eta_p$ 
    1) Recall the definition of $A_v(W)$
    \begin{equation}
        A_v(W)(s) = \frac{1}{T_v} \int_0^{T_v} W(s,t) dt.
    \end{equation}
    Thus if $s \in \p \{A_v(W)>0\}$, then for some $t$,  we have $sv^{\bot}+tv \in {\mathcal G}(v)$. By Remark \ref{r:finiteG}, ${\mathcal G}$ is finite, and so there are finitely many points in $\p \{A_v(W)>0\}$. Therefore $\{A_v(W)=0\}$ has finitely many connected components. Furthermore $\{A_v(W)=0\}$ is closed as the level set of a continuous function. The only closed connected sets in $\Sb^1$ are closed intervals or points. Thus $\{A_v(W)=0\}$ is a finite union of closed intervals and points. 
    %We now interpret $\{A_v(W)>0\}$ as a subset of $\Sb^1$. Between adjacent points in $\p\{A_v(W)>0\}$, we must have either $A_v(W)=0$ or $A_v(W)>0$. Therefore $\{A_v(W)>0\}$ is a finite union of open intervals, and so its complement, $\{A_v(W)=0\}$ is a finite union of points and closed intervals.
    
    2) Now we show that if ${\mathcal L}_1(v) = \varnothing$, then $A_v(W)=0$ only on a finite set. 
    To see this, consider $s_0 \in \p\{A_v(W)>0\}$, so for some $t_0$, we have $t_0 v +s_0 v^{\bot}\in {\mathcal G}(v)$. If 
    \begin{equation}
        A_v(W)(s)=0 \text{ for } s \in (s_0, s_0+\e_0)\quad (\text{or } s\in (s_0-\e_0,s_0)),
    \end{equation}
    for some $\varepsilon_0>0$, then for all $t$
    \begin{equation}
        W(s,t) = 0 \text{ for } s \in (s_0, s_0+\e_0) \quad (\text{or } s\in (s_0-\e_0,s_0)).
    \end{equation}
    That is, for all $\e \in (0,\e_0)$
    \begin{equation}
        L_{t_0v+(s_0+\e) v^{\bot}, v} \cap \omega =\varnothing, \quad (\text{or  }L_{t_0v+(s_0-\e) v^{\bot}, v} \cap \omega =\varnothing).
    \end{equation}
    Therefore $L_{t_0v+s_0 v^{\bot},v} \in {\mathcal L}_1(v) =\varnothing$, which is a contradiction. Thus $A_v(W)>0$ on a punctured neighborhood of $s_0$, and $A_v(W)=0$ only on a discrete, and hence finite, set. 
    \end{proof}		
    We prepare for the rest of the proof of  Proposition \ref{prop:order} with a local averaging estimate:
    \begin{lemma}\label{l:WlocAvg} 
    Suppose $z_0 \in \mathcal{G}(v)$ is a point of order $\eta$ with $(\psi, U)$ as in Definition \ref{def:order}, and $W(z) \simeq d(z)^{\beta}$ in a neighborhood of $z_0$. Suppose $(s_0,t_0)= s_0 v^{\bot} + t_0 v = z_0$ and let $L_s= L_{z_0+sv^{\bot},v}$. Assume there exists $\d>0$ such that  $L_s \cap \omega \cap U \neq \varnothing$ for all $s \in [s_0, s_0+\d)$, resp.\ $s \in (s_0-\d,s_0]$.    
    Then there exists $C \geq 1, \e>0$ such that for $s \in [s_0,s_0+\e),$ resp.\ $s \in (s_0-\e,s_0]$, we have
    \begin{enumerate}
        \item When $\eta  \geq 1$,
            \begin{align}
                \frac{1}{C} |s - s_0|^{\beta + 1/\eta} 
                \leq \int_{L_s \cap U} W(s,t) dt \leq 
                C |s - s_0|^{\beta + 1/\eta} .
            \end{align}
            
        \item When $\eta \leq 1$,
            \begin{align}
                \frac{1}{C} |s - s_0|^{(\beta + 1)/\eta} 
                \leq \int_{L_s \cap U} W(s,t) dt \leq 
                C |s - s_0|^{(\beta + 1)/\eta} .
            \end{align}
    \end{enumerate}
    \end{lemma}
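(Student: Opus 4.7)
The plan is to pull the integral back to the affine chart $\psi$ of~\eqref{eq:psicoords}. Writing $x_0 = a(s-s_0)$, the line $L_s$ becomes the vertical segment $\{x = x_0\} \cap \psi(U)$, the measure $dt$ transforms to $|c|^{-1}\,dy$, and since $\psi$ is bi-Lipschitz on $U$ with $\psi(L) = \{x=0\}$, the distance $d(z)$ is comparable to $\tilde d(x,y) := \dist((x,y), \psi(\omega \cap U)^c)$. The hypothesis $W \simeq d^\beta$ therefore gives
\begin{equation*}
\int_{L_s \cap U} W(s,t)\,dt \;\simeq\; \int_{\{y\,:\,(x_0,y)\in\psi(\omega\cap U)\}} \tilde d(x_0,y)^\beta\,dy ,
\end{equation*}
and it remains to estimate this integral above and below by $|x_0|^{\beta+1/\eta}$ when $\eta\geq 1$, and by $|x_0|^{(\beta+1)/\eta}$ when $\eta\leq 1$. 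Both the length of the slice and the pointwise size of $\tilde d$ on it will be controlled via the sandwich inclusions $\mathcal{F}_\eta \subset \psi(\omega\cap U) \subset \Omega_\eta$ of Definition~\ref{def:order}.

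For the upper bound I will use two pointwise estimates on $\tilde d$: the inclusion $\psi(L) \subset \psi(\omega)^c$ gives $\tilde d(x_0,y) \leq x_0$; the inclusion $\Omega_\eta^c \subset \psi(\omega)^c$ gives $\tilde d(x_0,y) \leq (C_{out}x_0)^{1/\eta}$. When $\eta \geq 1$ the former is tighter and yields $\tilde d^\beta \lesssim x_0^\beta$, while when $\eta \leq 1$ the latter is tighter and yields $\tilde d^\beta \lesssim x_0^{\beta/\eta}$. In both cases $\psi(\omega \cap U) \cap \{x=x_0\} \subset [-(C_{out}x_0)^{1/\eta}, (C_{out}x_0)^{1/\eta}]$ bounds the length of the slice by $2(C_{out}x_0)^{1/\eta}$, producing the desired upper bounds $x_0^{\beta+1/\eta}$ and $x_0^{(\beta+1)/\eta}$ respectively.

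For the matching lower bound I will shrink the inner set by defining $\mathcal{F}'_\eta$ with $C_{in}$ replaced by $\sqrt{C_{in}}$, so that $\mathcal{F}'_\eta$ is strictly contained in $\mathcal{F}_\eta \subset \psi(\omega \cap U)$ (with the appropriate sign restriction inherited from \eqref{e:order-1-sided} or \eqref{e:order-2-sided}). The slice $\mathcal{F}'_\eta \cap \{x = x_0\}$ has length comparable to $x_0^{1/\eta}$, and on it $\tilde d(x_0, y) \geq \dist((x_0,y), \mathcal{F}_\eta^c)$. An elementary calculation on the model boundary curves $|x| = C|y|^\eta$ shows that this perpendicular distance is $\gtrsim x_0$ when $\eta \geq 1$ (curves steep or linear near the origin, so perpendicular distance $\sim$ horizontal separation $\sim x_0$) and $\gtrsim x_0^{1/\eta}$ when $\eta \leq 1$ (curves tangent to the $x$-axis, so perpendicular distance $\sim$ vertical separation $\sim x_0^{1/\eta}$). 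Integrating reproduces the upper-bound exponents.

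The main technical obstacle will be this last perpendicular-distance estimate, since the local tangent direction of the model boundary $|x| = C|y|^\eta$ varies sharply with $\eta$ (nearly vertical when $\eta > 1$, of finite slope when $\eta = 1$, nearly horizontal when $\eta < 1$). Parametrizing each boundary branch by $y$ when $\eta \geq 1$ and by $x$ when $\eta \leq 1$, together with the strict gap between $C_{in}$ and $\sqrt{C_{in}}$, should yield the required lower bound uniformly for $x_0$ small.
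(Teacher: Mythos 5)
Your proposal is correct and lives in the same geometric framework as the paper's proof (pull back by the affine chart, localize $d(z)$ to $\dist(\cdot,\omega^c\cap U)$, and sandwich $\psi(\omega\cap U)$ between $\mathcal{F}_\eta$ and $\Omega_\eta$), but you evaluate the resulting one--dimensional integrals by a genuinely different and more elementary route. The paper converts the pointwise distance estimates of Lemma~\ref{l:distancelemma-fang} into exact profiles $(k-|\tilde t|^\eta)^\beta$ and integrates them via the beta-type identity of Lemma~\ref{l:change-var}; in particular its lower bound requires the somewhat delicate splitting with $\tilde C_{in}=\tfrac{C_{in}+C_{in}^{-1}}{2}$ and a difference of two $c_0$-integrals. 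You instead bound the integral by $(\sup\text{ of integrand})\times(\text{slice length})$ from above and by $(\inf\text{ on a proportional sub-slice})\times(\text{sub-slice length})$ from below, using the shrunken region $\mathcal{F}'_\eta$ with constant $\sqrt{C_{in}}$ to guarantee a uniform distance $\gtrsim x_0$ (resp.\ $x_0^{1/\eta}$) to $\mathcal{F}_\eta^c$. Since the exponents only require order of magnitude, this works and noticeably simplifies the lower bound, at the price of still needing the perpendicular-distance estimates on the model curves $|x|=c|y|^\eta$ — which is exactly the content of the paper's Lemma~\ref{l:distancelemma-fang}, and your sketch of it (graph parametrization by $y$ for $\eta\ge1$ and by $x$ for $\eta\le1$, so the relevant gradient is bounded near the origin, plus the multiplicative gap between $\sqrt{C_{in}}$ and $C_{in}$) is sound. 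Two small points to make explicit in a write-up: the normalization $a>0$ in \eqref{eq:psicoords} so that $s>s_0$ corresponds to $x>0$ (needed to match the hypothesis $L_s\cap\omega\cap U\neq\varnothing$ with the side on which \eqref{e:order-1-sided} or \eqref{e:order-2-sided} places $\omega$), and the fact that in the two-sided case with $x_0<0$ the sub-slice must be taken in $\{y\le 0\}$ for the second alternative of \eqref{e:order-2-sided}.
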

    \begin{proof}
    We will consider the case where $L_s \cap \omega \cap U \neq \varnothing$ for all $s \in [s_0,s_0+\d)$ as the proof for $s \in (s_0-\d, s_0]$ is analogous. Without loss of generality we may also assume that $a>0$ in \eqref{eq:psicoords} so that $s>s_0$ is mapped to $x>0$ by $\psi$.
    
    % First, we estimate the $t$ coordinates of the end points of the line segment $\omega \cap L_s$.
    %     Write 
    %     \begin{equation}
    %         \omega \cap L_s = \{(s,t): t^-(s) < t < t^+(s)\}.
    %     \end{equation}
    %     We know that $(s,t^{\pm}(s)) \in \p \omega$, so choosing $\e \in (0,\d)$ small enough, we have $\psi(s,t^{\pm}(s)) \in \p\psi(\omega \cap U)$ for $s\in [s_0,s_0+\e)$. 
    %     Thus by Definition \ref{def:order} and \eqref{eq:psicoords} for all $s \in [s_0,s_0+\e)$
    %     \begin{align}\label{eq:sizeoftildet}
    %         |b(s-s_0)+c(t^{\pm}(s)-t_0)|^{\eta} \leq a C_{out} (s-s_0).
    %     \end{align}
    %     Now we turn to the integral estimates. We first manipulate the expression for $W$.
        
        When $\e>0$ is taken small enough, for $(s,t) \in \omega \cap U$ we have
        \begin{equation}
            C^{-1} \dist\left( (s,t), \omega^c \cap U\right)^{\beta}
                \le W(s,t)
                \le C \dist\left( (s,t), \omega^c \cap U\right)^{\beta }.
        \end{equation}
        By its definition $\psi$ is bi-Lipschitz. Using this, \eqref{eq:psicoords}, and  writing $\ti{t}=b(s-s_0)+c(t-t_0)$, 
        \begin{align}
            \dist((s,t),\omega^c \cap U) \simeq  \dist(\psi(s,t), \psi(\omega^c \cap U)) =\dist\left(\begin{pmatrix}a(s-s_0) \\ \ti{t} \end{pmatrix}, \psi(\omega^c \cap U) \right) . \label{eq:JbiLipschitz}
        \end{align}
        Now let 
        \begin{align}
            \Gamma_{out} &=\{(x,y) \in \psi(U) : C_{out} |y|^{\eta} \leq x\}^c , \\
            \Gamma_{in} &= \{(x,y) \in \psi(U) : y \geq 0,  C_{in}^{-1} |y|^{\eta} \leq x \leq C_{in}|y|^{\eta}\}^c.
        \end{align}
        By Definition \ref{def:order},
        \begin{equation}
            \Gamma_{in} \supset \psi(\omega^c \cap U) \supset \Gamma_{out}.
        \end{equation}
        Therefore 
        \begin{equation} 
            \dist(\psi(z), \Gamma_{in}) \leq \dist(\psi(z), \psi(\omega^c\cap U)) \leq \dist(\psi(z), \Gamma_{out}),
        \end{equation}
        and so
        \begin{equation} \label{e:sandwich-in-out}
            \dist(\psi(z), \Gamma_{in}) \lesssim \dist(z, \omega^c \cap U) \lesssim \dist(\psi(z), \Gamma_{out}).
        \end{equation}
        We now focus on the upper bound. By Lemma \ref{l:distancelemma-fang}, we have for any $z$ such that $\psi(z) \not \in \Gamma_{out}$:
        \begin{equation} \label{e:dist-Gamma-out}
        \begin{split}            \dist(\psi(z),\Gamma_{out})=\dist\left(\begin{pmatrix}
            a(s-s_0) \\ \ti{t}
        \end{pmatrix}, \Gamma_{out}\right)  \lesssim
            \left\{
            \begin{aligned}
            \left| |\tilde t|^{\eta} - C_{out}^{{-1}} a (s - s_0) \right|, & \;\; \eta \ge 1 , \\
            \left| |\tilde t| - (C_{out}^{{-1}} a (s - s_0))^{1/\eta} \right|, & \;\; \eta \le 1 .
            \end{aligned}
            \right.
        \end{split}
        \end{equation}
        When $\eta \geq 1$ we combine this with~\eqref{e:sandwich-in-out} to obtain
        \begin{align}
            \int_{L_s \cap U} W(s,t) dt~
                &{\lesssim} \int_{L_s \cap U} \dist\left( (s,t), \omega^c \cap U \right)^{\beta} d t \\
                &\lesssim \int_{|\tilde t|^{\eta} \le C_{out}^{{-1}} a (s - s_0)} \bigg(C_{out}^{{-1}} a(s-s_0) - |\ti{t}|^{\eta}\bigg)^{\beta} d\ti t \\
                &\lesssim |s - s_0|^{\beta + 1/\eta} ,
        \end{align}
        where we used Lemma~\ref{l:change-var} in the last step, with $(\eta, k, \epsilon) = (\eta, C_{out}^{{-1}} a (s - s_0), 1)$.
        This is exactly the upper bound in part 1.
        When $\eta \le 1$, we combine~\eqref{e:dist-Gamma-out} with~\eqref{e:sandwich-in-out} to obtain
        \begin{align}
            \int_{L_s \cap U} W(s,t) dt
                ~&{\lesssim} \int_{L_s \cap U} \dist\left((s,t), \omega^c \cap U \right)^{\beta} dt\\
                &\lesssim \int_{|\tilde t|^{\eta} \le C_{out}^{{-1}} a (s - s_0)} \bigg((C_{out}^{{-1}} a(s-s_0))^{1/\eta} - |\ti{t}|\bigg)^{\beta} d\ti t \\
                &\lesssim |s - s_0|^{(\beta + 1)/\eta} ,
        \end{align}
        where we used Lemma~\ref{l:change-var} in the last step, with $(\eta, k, \epsilon) = ( 1, (C_{out}^{{-1}} a (s - s_0))^{1/\eta}, 1)$.
        This is exactly the upper bound in part 2.

        Now we turn to the lower bound. By definition, $\psi(z) = (x, y) \not\in \Gamma_{in}$ if and only if
        \begin{equation*} 
         C_{in}^{-1} |y|^{\eta}\le x \le C_{in} |y|^{\eta} .
        \end{equation*}
        Therefore, by Lemma \ref{l:distancelemma-fang}, we have for any $z$ such that $\psi(z) \not\in \Gamma_{in}$:
        \begin{align}
        \dist(\psi(z), \Gamma_{in})&=\dist\left(\begin{pmatrix} a(s-s_0) \\ \ti{t} \end{pmatrix}, \Gamma_{in} \right) \\
        &\gtrsim
            \left\{
            \begin{aligned}
            \min\left\{ \left| |\tilde t|^{\eta} - C_{in} a (s - s_0) \right|, \left| |\tilde t|^{\eta} - C_{in}^{-1} a (s - s_0) \right| \right\} &\quad \textrm{if} \;\; \eta \ge 1 , \\
            \min\left\{ \left| |\tilde t| - (C_{in} a (s - s_0))^{1/\eta} \right|, \left| |\tilde t| - (C_{in}^{-1} a (s - s_0))^{1/\eta} \right| \right\} &\quad \textrm{if} \;\; \eta \le 1 .
            \end{aligned}
            \right.
        \end{align}
        Splitting into cases based on whether $|\ti{t}|^{\eta}$ is closer to $C_{in} a(s-s_0)$ or $C_{in}^{-1} a(s-s_0)$, with $\tilde C_{in} := \frac{C_{in} + C_{in}^{-1}}{2}$, we can rephrase this as follows: 
        if $\eta \ge 1$, we have
        \begin{align} \label{e:l-bound-g}
        \dist(\psi(z), \Gamma_{in})
            &\gtrsim \left( C_{in} a (s - s_0) - |\tilde t|^{\eta} \right) \mathbf{1}_{C_{in} a (s - s_0) \ge |\tilde t|^{\eta} \ge \tilde C_{in} a (s - s_0)} \nonumber\\
            &\qquad\qquad + \left( |\tilde t|^{\eta} - C_{in}^{-1} a (s - s_0) \right) \mathbf{1}_{C_{in}^{-1} a (s - s_0) \le |\tilde t|^{\eta} \le \tilde C_{in} a (s - s_0)} \nonumber\\
            &\ge \left( C_{in} a (s - s_0) - |\tilde t|^{\eta} \right) \mathbf{1}_{C_{in} a (s - s_0) \ge |\tilde t|^{\eta} \ge \tilde C_{in} a (s - s_0)} .
        \end{align}
        For $\eta \le 1$, we have similarly
        \begin{equation} \label{e:l-bound-l}
        \dist(\psi(z), \Gamma_{in}) \gtrsim
            \left( (C_{in} a (s - s_0))^{1/\eta} - |\tilde t| \right) \mathbf{1}_{C_{in} a (s - s_0) \ge |\tilde t|^{\eta} \ge \tilde C_{in} a (s - s_0)} .
        \end{equation}
        In the case where $\eta \ge 1$, we raise both sides of ~\eqref{e:l-bound-g} to the power $\beta$ and integrate in $\tilde t$, then we use the left-hand side of~\eqref{e:sandwich-in-out} to obtain
        \begin{equation*}
        \int_{L_s \cap U} \dist\left( (s,t), \omega^c \cap U \right)^\beta dt
            \gtrsim \int_{C_{in} a (s - s_0) \ge |\tilde t|^\eta \ge \tilde C_{in} a (s - s_0)} \left( C_{in} a (s - s_0) - |\tilde t|^\eta \right)^\beta \, d \tilde t .
        \end{equation*}
        Next, we decompose the right-hand side into two terms and apply Lemma~\ref{l:change-var} to each of them with $(\eta ,k) = (\eta, C_{in} a(s-s_0))$ and $\epsilon = 1$ or $\epsilon=\tilde C_{in}/C_{in} < 1$
        \begin{align*}
        \int_{L_s \cap U} W(s,t) dt~
            &{\gtrsim} \int_{L_s \cap U} \dist\left( (s,t), \omega^c \cap U \right)^{\beta} d t \\
            & \gtrsim \int_{|\tilde t|^{\eta} \le C_{in} a (s - s_0)} \left( C_{in} a (s - s_0) - |\tilde t|^{\eta} \right)^{\beta} \, d \tilde t \\
            &\hspace{1.5cm}    - \int_{|\tilde t|^{\eta} \le \tilde C_{in} a (s - s_0)} \left( C_{in} a (s - s_0) - |\tilde t|^{\eta} \right)^{\beta} \, d \tilde t \\
            &= c_0(1) \left(C_{in} a (s - s_0)\right)^{\beta + 1/\eta} - c_0\left(\tfrac{\tilde C_{in}}{C_{in}}  \right) \left(C_{in} a (s - s_0)\right)^{\beta + 1/\eta} \\
            &\gtrsim (s - s_0)^{\beta + 1/\eta}
        \end{align*}
        (recall that the function $c_0$ from Lemma~\ref{l:change-var} is increasing). Following the same process in the case $\eta \le 1$, this time choosing $\epsilon = 1$ or $\epsilon=\tilde C_{in}/C_{in} < 1$ and $(\eta ,k) $$= ( 1, (C_{in} a(s-s_0))^{1/\eta})$ in Lemma~\ref{l:change-var}, we arrive at
        \begin{equation*}
        \int_{L_s \cap U} W(s,t) dt
            ~{\gtrsim} \int_{L_s \cap U} \dist\left( (s,t), \omega^c \cap U \right)^{\beta} d t
            \gtrsim (s - s_0)^{(\beta + 1)/\eta} ,
        \end{equation*}
        which yields the desired lower bound.
    \end{proof}
Now we prove the averaging estimate \eqref{eq:ordergrowavg2} near intervals where the average is zero. 
%        \begin{lemma}\label{l:averagingGrowthInterval}
%            Under the  assumptions in Theorem \ref{thm:suff}, and recalling the definition of $\beta_v$ there, for each $v \in \mathcal{V}$, there exists a neighborhood of each interval $[\alpha_j, \rho_j]$ from Lemma~\ref{l:averagingZeros} where $A_v(W)$ is zero, such that 
%                \begin{equation}
%                    d_j(s)^{\beta_v} \lesssim A_v(W)(s) \lesssim d_j(s)^{\beta_v},
%                \end{equation}
%                where $d_j(s)=\dist(s,[\alpha_j, \rho_j])$.
%        \end{lemma}
    \begin{proof}[Proof of \eqref{eq:ordergrowavg2}] %1)
   Let $[\alpha, \rho]$ be one of the intervals  from Proposition \ref{prop:order}(1). Then  $A_v(W)(s)=0$ for $s \in [\alpha,\rho]$, and for some $\e>0$, $A_v(W)(s)>0$ for all $s \in (\alpha-\e,\alpha)\cup(\rho,\rho+\e)$. We will only estimate $A_v(W)$ for $s \in [\rho, \rho+\e)$ as an analogous argument applies for $s$ near $\alpha$.
    
    In view of Proposition~\ref{prop:order}(1) and the definition of $A_v(W)$, for all $t$ such that $(\rho,t) \in {\mathcal G}(v)$, we have $(\rho,t) \in {\mathcal G}_1(v)$. By Remark \ref{r:finiteG} we know ${\mathcal G}_1(v)$ is finite. Thus there are finitely many points $(\rho, t_k) \in {\mathcal G}_1(v)$. Each point $(\rho, t_k)$ is a point of order $\eta_k$, we let $U_k$ be the associated coordinate neighborhood from Definition \ref{def:order}. We can also assume that they are pairwise disjoint. We also have for some $\beta_k$ that $W(z) \simeq d(z)^{\beta_k}$ near $(\rho, t_k)$.

    Then, letting $z_0=(\rho,t_1)$ and $L_s=L_{z_0+sv^{\bot},v}$, we have
    \begin{equation}
        \bigcup_{k=1}^n U_k \supset L_{\rho} \cap \p \omega = \bigcup_{k=1}^n \{(\rho, t_k)\}.
    \end{equation}
    Furthermore, there exists $\e>0$, such that for all $s \in [\rho,\rho+\e)$,
    \begin{equation}
        \bigcup_{k=1}^n U_k \supset L_{s} \cap  \omega.
    \end{equation}
%            [[P: Here is the short proof of this statement that we do not need to include. Assume otherwise, so there exists $s_l \ra \rho$ as $l \ra \infty$ such that $(s_l,t_l) \in \omega$ and $(s_l,t_l \not \in \bigcup_{k=1}^n U_k$. Then since $(s_l, t_l)$ is bounded, it has a convergent subsequence $(s_l,t_l) \ra (\rho, t_0)$. We automatically have $(\rho,t_0) \in \overline{\omega}$ and since $L_{\rho}$ is a glancing line $(\rho,t_0) \in \p \omega)$. Now since $L_{\rho} \cap \p \omega \subset \bigcup_{k=1}^n U_k$ we have $(\rho,t_0) \in \bigcup_{k=1}^n U_k$. Since the $U_k$ are open, there is a $\d>0$ size ball around $(\rho,t_0)$ which is contained in $\bigcup_{k=1}^n U_k$, and so the $(s_l,t_l)$ that get within $\d$ of $(\rho,t_0)$ are contained in $\bigcup_{k=1}^n U_k$, giving a contradiction.]]

    Therefore, for $s \in [\rho, \rho+\e)$, we have 
    \begin{equation}
        A(W)(s) = \frac{1}{T_v} \int_{0}^{T_v} W(s,t) dt =  \frac{1}{T_v} \sum_{k=1}^n \int_{L_s \cap U_k} W(s,t) dt.
    \end{equation}
    We apply Lemma~\ref{l:WlocAvg} to each term of the sum to deduce that there exists $C \geq 1$, such that
    \begin{align}
        C^{-1} (s-\rho)^{\beta_v} \leq \int_{L_s \cap U_k} W(s,t) dt \leq C (s-\rho)^{\beta_v}, \qquad 
        \beta_v = \frac{\beta_k}{\min(\eta_k , 1)} + \frac{1}{\eta_k },
    \end{align}
    which yields \eqref{eq:ordergrowavg2}.
    \end{proof}

    Now we prove the  estimate \eqref{eq:ordergrowavg1} near isolated points where the average is zero. 
%        \begin{lemma}\label{l:averagingGrowthPoint}
%            Under the geometric assumptions in Theorem \ref{thm:suff} and recalling the definitions of $\gamma, \gamma_v, \beta$, and $\beta_v$ there, for each $v \in \mathcal{V}$, there exists a neighborhood of each point $s_j$ where $A_v$ is zero, such that 
%                \begin{equation}
%                    |s-s_j|^{\gamma_v} \lesssim A_v(W)(s) \lesssim |s-s_j|^{\gamma_v}.
%                \end{equation}
%        \end{lemma}
    \begin{proof}[Proof of \eqref{eq:ordergrowavg1}]
    After  some additional care to describe the geometry, the proof is similar to that of \eqref{eq:ordergrowavg2}. Consider one of the points $s_0$ from Proposition \ref{prop:order}(1). Thus $A_v(W)(s_0)=0$, and for some $\e>0$,
    \begin{equation}\label{e:avwsg0}
        A_v(W)(s)>0 \text{ for all }s \in (s_0-\e,s_0)\cup(s_0, s_0+\e).
    \end{equation}
    The glancing points on $L_{s_0}$ are the points $(s_0, t)$ for all $t$ such that $(s_0, t) \in \p \omega$. By Remark~\ref{r:finiteG},  $\Gc$ is finite. Thus, there are finitely many points $(s_0, t_k) \in \Gc(v) \cap L_{s_0}$, which may be one-sided or two-sided. Each $(s_0,t_k)$ is a point of some order $\eta_k$, and we let $U_k$ be the associated coordinate neighborhood from Definition \ref{def:order}. % We can assume the $U_k$'s to be disjoint. 
    We also have $W(z) \simeq d(z)^{\gamma_k}$ for some $\gamma_k$ on a neighborhood of $(s_0,t_k)$, either on one side or on both sides of $L_{s_0}$.

%		For a point $(s_0, t_k) \in {\mathcal G}_2(v)$, there exists $\e_0>0$ such that one of three alternatives holds
%        \begin{enumerate}
%            \item There exists a neighborhood $U$ of $(s_0,t_k)$, such that for $\e \in(0,\e_0)$
%           \begin{equation}
%               \omega \cap U \cap L_{(s_0,t_k)-\e v^{\bot},v} = \emptyset.
%           \end{equation}
%           \item There exists a neighborhood $U$ of $(s_0,t_k)$, such that for $\e \in(0,\e_0)$
%           \begin{equation}
%               \omega \cap U \cap L_{(s_0,t_k)+\e v^{\bot},v} = \emptyset.
%           \end{equation}
%           \item Otherwise. That is the damping is positive near $(s_0,t_k)$ on both sides of $L_{(s_0,t_k),v}$.
%       \end{enumerate}
    
    % \begin{equation}\label{eq:localbndryorder1}
    % 	U_y \cap {\mathcal G}(W) \subset 
    % 	\begin{cases}
    % 		\{(z,y)+t(0,1) + s(1,0): a|t|^{\eta'} \leq s \leq b|t|^{\eta'}\}\\
    % 		\{(z,y)+t(0,1) - s(1,0): a|t|^{\eta'} \leq s \leq b|t|^{\eta'}\}\\
    % 		\{(z,y)+t(0,1) \pm s(1,0): a|t|^{\eta'} \leq s \leq b|t|^{\eta'}\}.
    % 	\end{cases}
    % \end{equation}
    
%		For a given $s_0$, we must have either 
%		\begin{itemize}
%			\item the third case holds with $(s_0, t_0)$ for at least one $t_0$
%			\item the first case holds with $(s_0, t_1)$ for at least one $t_1$ and the second case holds with $(s_0, t_2)$ for at least one  $t_2$, with $t_1 \neq t_2$
%		\end{itemize}
    As in the proof of  \eqref{eq:ordergrowavg2}, there exists $\e>0$, such that for all $s \in (s_0-\e, s_0+\e)$, we have $\bigcup_{k=1}^n U_k \supset L_{s} \cap \omega$. 
    %(This is because $\gamma^{-1}(\bigcup_{j=1}^n U_j)$ is open, so its complement is closed and bounded, thus compact, and so the continuous function $\gamma_x$ attains a minimum $m$ over it, and this minimum $m$ is strictly larger than $z$, so choose $\e>0$ small enough so that $z+\e<m$).		
    Therefore, for $s \in (s_0-\e, s_0+\e)$ we have 
    \begin{equation}\label{eq:splitUj}
        A_v(W)(s) = \frac{1}{T_v} \int_0^{T_v} W(s,t) dt =  \frac{1}{T_v} \sum_{k=1}^n \int_{L_s \cap U_k} W(s,t) dt ,
    \end{equation}
    where $T_v$ is the length of $L_s$.

By Lemma \ref{l:WlocAvg}, for each $s$ and $k$ we have
\begin{equation}\label{e:wstless}
\int_{L_s \cap U_k} W(s,t) dt \le C |s-s_0|^{\gamma_v}, \qquad \gamma_v = \frac{\gamma_k}{\min(\eta_k,1)} + \frac 1 {\eta_k}.
\end{equation}
Furthermore, for each $s$,
\begin{equation}\label{e:wstgtr}
\int_{L_s \cap U_k} W(s,t) dt \ge C^{-1} |s-s_0|^{\gamma_v},
\end{equation}
for all $k$ such that the left side is not zero, and such a $k$ exists for each $s \ne s_0$ by \eqref{e:avwsg0}. Hence \eqref{eq:ordergrowavg1} follows from inserting \eqref{e:wstless} and \eqref{e:wstgtr} into \eqref{eq:splitUj}.        
%		Note that if $(s_0,t_k)$ is in case 1, then, for $s<s_0$,
%		\begin{equation}
%			\int_{L_s \cap U_k} W(s,t) dt =0. 
%		\end{equation}
%		Similarly if  $(s_0, t_k)$ is in case 2, then for $s>s_0$
%		\begin{equation}
%			\int_{L_s \cap U_k} W(s,t) dt =0. 
%		\end{equation}
%		Applying Lemma \ref{l:WlocAvg} to each term in the sum, shows that if $(s_0,t_k)$ is in case 1, then for $s>s_0$
%		\begin{align}\label{eq:orderZgoal}
%			&(s-s_0)^{\gamma_v} \lesssim \int_{L_s \cap U_k} W(s,t) dt \lesssim (s-s_0)^{\gamma_v}.\\
%           &\gamma_v=\frac{\gamma'}{\min(\eta',1)}+\frac{1}{\eta'}
%		\end{align}
%		Similarly if $(s_0,t_k)$ is in case 2, then for $s<s_0$
%		\begin{equation}
%			(s_0-s)^{\gamma_v} \lesssim \int_{L_s \cap U_k} W(s,t) dt \lesssim (s_0-s)^{\gamma_v}.
%		\end{equation}
%		Finally, an analogous argument shows that if $(s_0,t_k)$ is in case 3, then for all $s \in (s_0-\e, s_0+\e)$
%		\begin{equation}
%			(s-s_0)^{\gamma_v} \lesssim \int_{L_s \cap U_k} W(s,t) dt \lesssim (s-s_0)^{\gamma_v}.
%		\end{equation}
%		These along with \eqref{eq:splitUj} and the dichotomy of cases show that the desired conclusion holds.
    \end{proof}

\section{Proofs of results for polygonal damping sets} \label{sec:generic-polygon}

In this section we prove Proposition~\ref{p:polygon} and Corollary \ref{c:polygon}.

\begin{proof}[Proof of Proposition~\ref{p:polygon}]
First, according to Lemma~\ref{l:finite-trapped}, for any $r > 0$, there is a finite set $\Upsilon_r \subset \Sb^1$ of rational directions on $\Tb^2$ such that any geodesic with direction $v \not\in \Upsilon_r$ enters every open ball of radius $r$ in $\Tb^2$. Note that if  $r_1 \le r_2$, then $\Upsilon_{r_2} \subset \Upsilon_{r_1}$.

Let $\mathcal{Q}'$ be the set of polygons $P \in \mathcal{P}_n$ whose directions of edges do not belong to $\Upsilon_{1/k_P}$, where $k_P := \lfloor \frac{1}{r(\omega_P)} \rfloor + 1$, and 
\begin{equation} \label{e:inradius}
r = r(\omega_P)
	:= \sup \left\{ \epsilon > 0 \;:\; \exists z \in \Tb^2 : B_{\e}(z) \subset \omega_P \right\} .
\end{equation}
Let $\mathcal{Q}$ be the interior of $\mathcal{Q}'$.

Introduce for any rational direction $v \in \Sb^1$ and any $k \in \{1, 2, \dots, n\}$ the set:
\begin{equation} \label{e:cal-E}
\mathcal{E}_{v, k} := \Rb^{2(k-1)} \times \Span(v) \times \Rb^{2(n-k)}
    \subset \Rb^{2n},
\end{equation}
which is a subspace of dimension $2n - 1$. 
Note that for any $v$ and $k$, $\mathcal{H}_n \not \subset \mathcal{E}_{v,k}$, as $\mathcal{E}_{v,k}$ does not contain any polygon whose $k$th side is an irrational vector. Thus, $\mathcal{H}_n \cap \mathcal{E}_{v, k}$ is a proper subspace of $\mathcal{H}_n$, of dimension at most $\dim \mathcal{H}_n - 1 = 2n - 3$.

We will prove that $\mathcal Q$ is dense by showing that for any $P \in \mathcal P_n$ there is $\epsilon>0$ such that
\begin{equation}\label{e:calQ'}
    \mathcal{Q}' \cap B_{\epsilon}(P) \supset \Big(\mathcal{P}_n \backslash  \bigcup \{\mathcal{E}_{v, k} \cap \mathcal{H}_n \colon  1 \le k \le n , v \in \Upsilon_{1/k_P}\}\Big) \cap B_{\epsilon}(P);
\end{equation}
this suffices because the right hand side of \eqref{e:calQ'} is open and dense in $B_{\epsilon}(P)$. %Observe, moreover, that  by Lemma \ref{l:finite-trapped}, the number of sets in the union is $j_0 = n |\Upsilon_{1/k_P}| \le n(1 + 1/r(\omega_P))^2$.
%Therefore, the same is true for $\mathcal{Q} = \mathring{\mathcal{Q}'}$, which is then dense in $\mathcal{P}_n$ and satisfies~\eqref{e:cal-Q}. 

To prove \eqref{e:calQ'},  choose a representation of $P$, with vertices $x_1, x_2, \ldots, x_n$, and a point $x$ in the polygon at distance $r := r(\omega_P)$ from $\partial \omega_P$. For any $\epsilon > 0$ and any other polygon $\tilde P$ defined by a family of vertices $\tilde x_1, \tilde x_2, \ldots, \tilde x_n$ such that
\begin{equation*}
|\tilde x_j - x_j| \le \epsilon ,
	\qquad \forall j \in \{1, 2, \ldots, n\} ,
\end{equation*}
we have for all $j \in \{1, 2, \ldots, n\}$ and $t \in [0, 1]$:
\begin{align*}
|(1 - t) \tilde x_j + t \tilde x_{j+1} - x|
	&= |(1 - t) (\tilde x_j - x_j) + t (\tilde x_{j+1} - x_{j+1}) + (1 - t) x_j + t x_{j+1} - x| \\
	&\ge |(1 - t) x_j + t x_{j+1} - x| - (1 - t) |\tilde x_j - x_j| - t |\tilde x_{j+1} - x_{j+1}| \\
	&\ge r - \epsilon .
\end{align*}
We deduce that any perturbed polygon $\tilde P$ in $B_\epsilon(P)$ is such that $r(\omega_{\tilde P}) \ge r(\omega_P) - \epsilon$. Taking $\epsilon$ sufficiently small ensures that
\begin{equation*}
k_{\tilde P}
    := \left\lfloor \dfrac{1}{r(\omega_{\tilde P})} \right\rfloor + 1
    \le \left\lfloor \dfrac{1}{r(\omega_P) - \epsilon} \right\rfloor + 1 
    = \left\lfloor \dfrac{1}{r(\omega_P)} \right\rfloor + 1 
    =: k_P .
\end{equation*}
Therefore, we have $\Upsilon_{1/k_{\tilde P}} \subset \Upsilon_{1/k_P}$ for any $\tilde P \in B_\epsilon(P)$. By definition of $\mathcal{Q}'$, we deduce that
\begin{equation*}
\mathcal{Q}'^c \cap B_\epsilon(P)
    \subset \bigcup_{\substack{1 \le k \le n \\ v \in \Upsilon_{1/k_P}}} \mathcal{E}_{v, k} \cap \mathcal{H}_n \cap B_\epsilon(P) ,
        \qquad \dim \mathcal{E}_{v, k} \cap \mathcal{H}_n \le 2n - 3 ,
\end{equation*}
which implies \eqref{e:calQ'}.

We now assume that $P \in \mathcal{Q}$ and prove that the glancing set of $\omega_P$ consists of vertices of order~$1$. Let $L$ be a glancing line and $z \in L \cap \partial \omega_P$ and denote by $v$ the direction of $L$. We claim that $z$ is a vertex of the polygon. If not, since the polygon has no self-intersections, then either $v$ is tangent to the edge containing $z$, or $L$ enters the interior of $P$. In the first case $L \cap \partial \omega_P$ would contain an edge, which is impossible in view of the definition of $\mathcal{Q}' \supset \mathcal{Q}$ and Lemma~\ref{l:finite-trapped}. In the second case this contradicts $L$ being glancing at $z$. Thus our claim is true.

Now since the polygon has no self-intersections and projects properly onto the torus, we deduce that there are exactly two edges $v_j, v_{j+1}$ connected to this vertex, lying on the same side of $L$. We can construct an affine chart in a neighborhood of $z$ by mapping $(\delta (v_{j+1} - v_j), \delta v), \delta \ll 1$, to the standard basis vectors $(e_1, e_2)$ in $\Rb^2$.

Finally, we need to justify that for any $P \in \mathcal{P}_n$, only finitely many admissible angles $\theta \in \Theta(\omega_P)$ are such that $R_\theta P \not\in \mathcal{Q}$. This is true because by Lemma \ref{l:finite-trapped} there are only finitely many directions $v \in \Upsilon_{1/k_P}$ which can have glancing lines. For each $j \in \{1, 2, \ldots, n\}$, there are finitely many angles for which $R_\theta v_j$ coincides with one of these pathological directions, hence the result.
\end{proof}

We apply Theorem~\ref{thm:suff} to deduce Corollary~\ref{c:polygon}.

\begin{proof}[Proof of Corollary~\ref{c:polygon}]
Let $P \in \mathcal{Q}$. The glancing set consists of a finite number of points of order $1$ by Proposition~\ref{p:polygon}. In the worst-case scenario, we have ${\mathcal L}_1 \neq \varnothing$, hence by Theorem~\ref{thm:suff} decay at rate $\alpha = 1 - \frac{1}{\beta + 1 + 3}$ (otherwise the decay is faster). For $P \in \mathcal{P}_n$, this applies to all but a finite number of rotations of $P$ by Proposition~\ref{p:polygon}.
\end{proof}

\section{Proofs of results for $C^2$ damping sets} \label{sec:generic-curve}

In this section we prove Proposition~\ref{p:curve} and Corollary \ref{c:curve}.
	
\begin{proof}[Proof of Proposition~\ref{p:curve}]
We first construct the set $\mathcal{Y}$. On the one hand, for any $\gamma \in \mathcal{U}$, we have a number $r(\omega_\gamma) > 0$ (see~\eqref{e:inradius}), corresponding to the radius of a ball contained in $\omega_\gamma$, the open set enclosed by $\gamma$. On the other hand, according to Lemma~\ref{l:finite-trapped}, for any $r > 0$, there is a finite set $\Upsilon_r \subset \Sb^1$ of rational directions on $\Tb^2$ such that any geodesic with direction $v \not\in \Upsilon_r$ enters any open ball of radius $r$ in $\Tb^2$. We can also assume that whenever $r_1 \le r_2$, then $\Upsilon_{r_2} \subset \Upsilon_{r_1}$. 

For any $\gamma \in \mathcal{U}$, with the curvature $\kappa$ as in \eqref{e:def-curvature}, we define $f_\gamma \in C(\Sb^1; [0, + \infty))$ by
\begin{equation*}
f_\gamma(t)
    := \kappa\left(\gamma(t)\right) + \prod_{\pm} \prod_{v \in \Upsilon_{1/n}} \left| \dfrac{\dot{\gamma}(t)}{|\dot{\gamma}(t)|} \pm \dfrac{v}{|v|} \right| ,
        \quad n = \left\lfloor \frac{1}{r(\omega_\gamma)} \right\rfloor + 1 ,
        \qquad t \in \Sb^1 .
\end{equation*}
Let $\mathcal{Y}$ be the subset of $\mathcal{U}$ of curves $\gamma$ such that $f_\gamma$ does not vanish. For such a curve $\gamma$, the product in the definition of $f_\gamma$ vanishes at any glancing point, since glancing directions necessarily belong to $\Upsilon_{1/n}$. Hence we have $\kappa > 0$ at every glancing point.

Let us prove that $\mathcal{Y}$ is open. Consider $\gamma \in \mathcal{Y}$ and $\tilde \gamma \in \mathcal{U}$ such that $\| \tilde \gamma - \gamma \|_{C^2} \le \epsilon$. Then
\begin{equation*}
r(\omega_{\tilde \gamma})
    \ge r(\omega_\gamma) - \epsilon.
\end{equation*}
This implies for $\epsilon$ small enough
\begin{equation*}
n_{\tilde \gamma}
    := \left\lfloor \dfrac{1}{r(\omega_{\tilde \gamma})} \right\rfloor + 1
    \le \left\lfloor \dfrac{1}{r(\omega_\gamma) - \epsilon} \right\rfloor + 1 
    = \left\lfloor \dfrac{1}{r(\omega_\gamma)} \right\rfloor + 1 
    =: n_\gamma.
\end{equation*}
Therefore, we have
\begin{equation} \label{e:ups-n}
\Upsilon_{1/n_{\tilde \gamma}} \subset \Upsilon_{1/n_\gamma} .
\end{equation}
Using the continuity of
\begin{equation*}
C^2(\Sb^1; \Rb^2) \ni \tilde \gamma
    \longmapsto  \left| \kappa\left(\tilde \gamma(t)\right) \right| + \prod_{\pm} \prod_{v \in \Upsilon_{1/n_\gamma}} \left| \dfrac{\dot{\tilde \gamma}(t)}{|\dot{\tilde \gamma}(t)|} \pm \dfrac{v}{|v|} \right|,
\end{equation*}
where we note that here we put $n_\gamma$ in place of $n_{\tilde \gamma}$, we deduce that the above function does not vanish for $\tilde \gamma$ sufficiently close to $\gamma$ in the $C^2$ topology. In view of~\eqref{e:ups-n}, we conclude that $f_{\tilde \gamma}$ does not vanish either, therefore $\mathcal{Y}$ is open.

Now, let $\gamma \in \mathcal{U}$ and let us show that ``most rotations" are in $\mathcal{Y}$. Consider the $C^1$ map
\begin{align*}
\Sb^1 \ni t &\longmapsto \dfrac{\dot \gamma(t)}{|\dot \gamma(t)|} \in \Sb^1.
\end{align*}
Its critical points are exactly the times at which $\kappa(\gamma(t)) = 0$, since the curvature is independent of the parametrization. By Sard's theorem, the set of critical values, that is to say the set $\mathcal{E} \subset \Sb^1$ of directions
\begin{equation*}
\frac{\dot \gamma(t)}{|\dot \gamma(t)|} \in \Sb^1
    \qquad \textrm{with} \qquad
\kappa\left(\gamma(t)\right) = 0
\end{equation*}
has Lebesgue measure $0$, and it is compact. % Note that a compact set of measure zero is nowhere dense.
Now note that $f_\gamma$ does not vanish for $\gamma$ rotated by some angle $\theta \in \Theta(\omega_\gamma)$, when $R_\theta \mathcal{E} \cap \Upsilon_{1/n_\gamma} = \varnothing$. Let us identify the directions in $\Upsilon_{1/n_\gamma}$ with angles in $\Sb^1$:
\begin{equation*}
\Upsilon_{1/n_\gamma}
    = \left\{ \theta_1, \theta_2, \ldots, \theta_k \right\} \subset \Sb^1 .
\end{equation*}
Then we have $R_\theta \mathcal{E} \cap \Upsilon_{1/n_\gamma} \neq \varnothing$ if and only if there exists $j \in \{1, 2, \ldots, k\}$ such that
\begin{equation*}
\theta_j \in R_\theta \mathcal{E}
    \quad \Longleftrightarrow \quad
\theta_j - \theta \in \mathcal{E}
    \quad \Longleftrightarrow \quad
- \theta \in R_{- \theta_j} \mathcal{E} .
\end{equation*}
We conclude that
\begin{equation*}
R_\theta \mathcal{E} \cap \Upsilon_{1/n_\gamma} = \varnothing ,
    \qquad \forall \theta \in \Sb^1 \setminus \bigcup_{j = 1}^k R_{-\theta_j}\mathcal{E} ,
\end{equation*}
and the above set of angles $\theta$ is the complement of a compact set of measure $0$.

We finally show that the set $\mathcal{Y}$ is dense in $\mathcal{U}$. For any $\gamma \in \mathcal{U}$, the set $\Theta(\omega_\gamma)$ is open, so it contains an open neighborhood of $0$, in which we can pick $\theta$, arbitrarily close to $0$, such that $R_\theta \gamma$ belongs to $\mathcal{Y}$.
\end{proof}

We apply Theorem~\ref{thm:suff} to deduce Corollary~\ref{c:curve}.

\begin{proof}[Proof of Corollary~\ref{c:curve}]
Let $\gamma \in \mathcal{Y}$. By Proposition~\ref{p:curve}, we know that the curvature is non-zero at every glancing point $z \in {\mathcal G}$. We can construct an affine chart in a neighborhood of $z$ by mapping $\delta (\ddot \gamma_1(0), \dot \gamma_1(0))$, $\delta \ll 1$, to $(e_1, e_2)$ the standard basis vectors in $\Rb^2$, where $\gamma_1$ is the arc-length reparameterization of $\gamma$ with $\gamma_1(0) = z$.  In this chart, we have $\gamma_1(t) = (t^2/2 \delta^2, t/\delta) + o(t^2)$, namely the point $z$ is of order $\eta = 2$. Theorem~\ref{thm:suff} yields decay at rate at least $1 - \frac{1}{\beta + 1/2 + 3}$.

Given $\gamma \in \mathcal{U}$, the above applies to rotations outside the compact negligible set $K$ from Proposition~\ref{p:curve}, which concludes the proof.
\end{proof}

    \appendix
	\section{Appendix} \label{app}

The following lemma allows us to evaluate integrals related to damping averages.

\begin{lemma} \label{l:change-var}
Let $\beta, \eta > 0$. Then there exists an increasing function $c_0 : [0, 1] \to (0, + \infty)$ such that
\begin{equation*}
\forall k > 0, \forall \epsilon \in [0, 1], \qquad
    \int_{|t|^\eta \le \epsilon k} \left(k - |t|^\eta\right)^\beta \, dt
        = c_0(\epsilon) k^{\beta + 1/\eta} .
\end{equation*}
\end{lemma}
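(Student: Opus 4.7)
The proof is a direct change of variables. The plan is to scale out the $k$ dependence and define $c_0$ explicitly as an integral over a fixed domain.

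First, by symmetry in $t \mapsto -t$, I would rewrite the integral as
\begin{equation*}
\int_{|t|^\eta \le \epsilon k} (k - |t|^\eta)^\beta \, dt
    = 2 \int_0^{(\epsilon k)^{1/\eta}} (k - t^\eta)^\beta \, dt .
\end{equation*}
Next, I would perform the substitution $t = k^{1/\eta} u$, so that $dt = k^{1/\eta} du$, $t^\eta = k u^\eta$, and the endpoint $t = (\epsilon k)^{1/\eta}$ becomes $u = \epsilon^{1/\eta}$. This gives
\begin{equation*}
2 \int_0^{(\epsilon k)^{1/\eta}} (k - t^\eta)^\beta \, dt
    = 2 k^{\beta + 1/\eta} \int_0^{\epsilon^{1/\eta}} (1 - u^\eta)^\beta \, du ,
\end{equation*}
which exhibits the claimed homogeneity in $k$ and motivates the definition
\begin{equation*}
c_0(\epsilon) := 2 \int_0^{\epsilon^{1/\eta}} (1 - u^\eta)^\beta \, du .
\end{equation*}

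To finish, I would check the claimed properties of $c_0$. The integrand $(1 - u^\eta)^\beta$ is continuous and strictly positive on $[0, 1)$, so by the fundamental theorem of calculus $c_0$ is continuous on $[0, 1]$ and strictly increasing, as its derivative with respect to $\epsilon$ equals $\frac{2}{\eta} \epsilon^{1/\eta - 1}(1 - \epsilon)^\beta > 0$ on $(0, 1)$. In particular $c_0(\epsilon) > 0$ for all $\epsilon \in (0, 1]$; note that $c_0(0) = 0$ unavoidably, so strictly speaking one should take the codomain to be $[0, + \infty)$, but this does not affect the applications in the proof of Lemma~\ref{l:WlocAvg} since $c_0$ is only evaluated at $\epsilon = 1$ and $\epsilon = \tilde C_{in}/C_{in} \in (0,1)$.

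There is no real obstacle here; the only subtle point is the (harmless) discrepancy at $\epsilon = 0$ noted above. Everything else reduces to a one-line substitution and monotonicity of a definite integral.
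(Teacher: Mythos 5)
Your proof is correct and essentially identical to the paper's, which uses the substitution $t' = k^{-1} t^\eta$ instead of $t = k^{1/\eta} u$; the two resulting formulas for $c_0$ agree after the further change of variables $t' = u^\eta$. Your observation that $c_0(0) = 0$ (so the codomain should strictly be $[0, +\infty)$) is a valid, harmless quibble with the statement that the paper does not address, and as you note it does not affect the applications since $c_0$ is only evaluated at strictly positive arguments.
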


\begin{proof}
By symmetry, it is sufficient to study the integral on $t \ge 0$. We introduce the change of variables $t' = k^{-1} t^\eta$, %whose Jacobian is given by $k^{-1} n |k t'|^{(n - 1)/n}$, 
$dt = \eta^{-1} k^{1/\eta} {t'}^{1/\eta-1}\,dt'$,
so that
\begin{equation*}
\int_0^{(\epsilon k)^{1/\eta}} \left(k - t^\eta\right)^\beta \, dt
   % = \int_0^{\epsilon} (k - kt')^\beta n^{-1} k |k t'|^{-(n - 1)/n} \, dt'
    =\int_0^{\epsilon} (k - kt')^\beta \eta^{-1} k^{1/\eta} {t'}^{1/\eta-1} \, dt'
    = k^{\beta + 1/\eta} \underbrace{\eta^{-1} \int_0^{\epsilon} (1 - t')^\beta t'^{1/\eta - 1} \, dt'}_{=: c_0(\epsilon)/2} ,
\end{equation*}
hence the result.
\end{proof}

	% \begin{lemma}\label{l:connectedsupport}
	% 	Let $U\subset \Rb^2$ be a nonempty open set. Define $\mathcal{P}_x = \{(x,0)+t(0,1): t \in \Rb\}$, and let 
	% 	\begin{equation}
	% 		P(x)=\mu_{\Rb}( \mathcal{P}_x \cap U) >0. 
	% 	\end{equation}
	% 	\begin{enumerate}
	% 		\item  If $P(x)=0$, then $(x,y) \not \in U$ for all $y \in \Rb$.
	% 		\item Furthermore if $U$ is connected and $(a,y_a), (b,y_b) \in U$ with $a<b$, then for all $x \in (a,b)$, $P(x)>0$.
	% 		\item If $U$ is connected, then for any slope $m \in \Rb$, $U$ has at most two one-sided glancing lines of slope $m$.
	% 	\end{enumerate}
	% \end{lemma}
	% \begin{proof}
	% 1. If $(x_0,y_0) \in U$, then since $U$ is open there exists $\e>0$ such that $B_{\e}(x,_0 y_0) \subset U$. Therefore $(x_0, y_0) \in U$ for all $y \in (y_0-\e, y_0+\e)$ and $P(x_0)>2\e.$
	
	% 2. Consider some $x \in (a,b)$. Since $U$ is connected, there exists a path $\Gamma_1 \subset U$ from $(a,y_a)$ to $(b,y_b)$ passing through $(x_0, y_0)$ for some $y_0$. That is $(x_0, y_0) \in U$, and so by part 1 $P(x_0)>0$.
	% \end{proof}

Next, we prove a lemma stating that the distance between the curve $x=|y|^\eta$ and a point $(x_0,y_0)$ is approximated by the distance between $(x_0,y_0)$ and $(y_0^\eta, y_0)$. 
	
	\begin{lemma}\label{l:distancelemma-fang}
		For $\eta > 0$ and $c > 0$, let 
		\begin{equation}
			\Gamma_c = \{(x,y) \in \Rb^2:  c |y|^\eta=x\}.
		\end{equation}
		Consider $(x_0, y_0) \in \Rb^2$ with $x_0 \geq 0$. 
		\begin{enumerate}
			\item If $ \eta \geq 1$, there exist $C>1,\e>0$ such that if $x_0, |y_0|<\e$, then
			\begin{equation}
				C^{-1} \left||y_0|^{\eta}-c^{-1} x_0\right| \leq \dist\left( (x_0,y_0), \Gamma_c  \right) \leq C \left| |y_0|^{\eta}- c^{-1} x_0\right|.
			\end{equation}
			\item If $\eta \in (0,1]$, there exist $C >1, \e>0$ such that if $x_0, |y_0|<\e$, then 
			\begin{equation}
				C^{-1} \left| |y_0|- (c^{-1} x_0)^{1/\eta} \right| \leq \dist\left((x_0, y_0), \Gamma_c\right) \leq C \left||y_0|- (c^{-1} x_0)^{1/\eta}\right|.
			\end{equation}
		\end{enumerate}
	\end{lemma}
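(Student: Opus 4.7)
The approach is to parametrize $\Gamma_c$ appropriately (by $y$ in case (1), by $x$ in case (2)) and exploit the local Lipschitz continuity of $t \mapsto |t|^\eta$ (respectively $t \mapsto t^{1/\eta}$) in a small neighborhood of the origin.

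The upper bounds follow by evaluating the distance to explicit test points on $\Gamma_c$. In case (1), the point $(c|y_0|^\eta, y_0) \in \Gamma_c$ is at distance $c\bigl||y_0|^\eta - c^{-1} x_0\bigr|$ from $(x_0, y_0)$. In case (2), since $x_0 \ge 0$, the point $\bigl(x_0, \operatorname{sgn}(y_0)\, (c^{-1}x_0)^{1/\eta}\bigr) \in \Gamma_c$ is at distance $\bigl||y_0| - (c^{-1}x_0)^{1/\eta}\bigr|$ from $(x_0, y_0)$. These immediately yield the stated upper bounds.

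For the lower bounds, let $(x_*, y_*) \in \Gamma_c$ realize the distance to $(x_0, y_0)$; existence follows from compactness after restricting attention to a bounded neighborhood, and a short symmetry argument (reflecting $\Gamma_c$ across $\{y = 0\}$, which leaves it invariant) shows $y_*$ can be chosen with the same sign as $y_0$. Since $(0,0) \in \Gamma_c$, one has $\dist((x_0, y_0), \Gamma_c) \le \sqrt{x_0^2 + y_0^2}$, which forces $|y_*|, x_* \le 3\varepsilon$ whenever $x_0, |y_0| < \varepsilon$. In case (1), the triangle inequality gives
\begin{equation*}
\bigl||y_0|^\eta - c^{-1} x_0\bigr|
    \le \bigl||y_0|^\eta - |y_*|^\eta\bigr| + c^{-1}\bigl|c|y_*|^\eta - x_0\bigr| .
\end{equation*}
For $\eta \ge 1$, the mean value theorem yields $\bigl||y_0|^\eta - |y_*|^\eta\bigr| \le \eta (3\varepsilon)^{\eta-1} \,|y_0 - y_*|$, and both terms on the right are bounded by a fixed multiple of $\dist((x_0, y_0), \Gamma_c)$. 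Case (2) is analogous: parametrize $\Gamma_c$ by its $x$-coordinate, write $|y_*| = (c^{-1} x_*)^{1/\eta}$, and apply the triangle inequality with the Lipschitz estimate for $t \mapsto t^{1/\eta}$ on $[0, \varepsilon]$, whose Lipschitz constant $\eta^{-1} \varepsilon^{1/\eta - 1}$ is finite precisely because $1/\eta - 1 \ge 0$ when $\eta \le 1$.

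The only mild obstacle is uniform control of these Lipschitz constants, which is exactly why the lemma splits into the two cases $\eta \ge 1$ and $\eta \le 1$ and requires restriction to a small neighborhood of the origin: for $\eta > 1$ the derivative of $t \mapsto t^\eta$ at $t=0$ vanishes but grows with $t$, and dually for $t \mapsto t^{1/\eta}$ when $\eta < 1$. Beyond this bookkeeping the argument is elementary.
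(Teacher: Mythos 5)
Your proof is correct, and it takes a genuinely different and more elementary route than the paper's. For the upper bound you use an explicit test point on $\Gamma_c$ (the horizontal projection $(c|y_0|^\eta, y_0)$ in case (1), the vertical projection $(x_0, \pm(c^{-1}x_0)^{1/\eta})$ in case (2)); for the lower bound you let $(x_*,y_*)$ be the nearest point, write $|y_*|^\eta = c^{-1}x_*$, and control $\bigl||y_0|^\eta - c^{-1}x_0\bigr|$ by the triangle inequality together with the uniform Lipschitz bound for $t \mapsto t^\eta$ (resp.\ $t \mapsto t^{1/\eta}$) near $0$, which is exactly where the dichotomy $\eta \ge 1$ versus $\eta \le 1$ enters. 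This cleanly reduces everything to $|x_*-x_0|, |y_*-y_0| \le \dist((x_0,y_0),\Gamma_c)$ and a one-variable mean value estimate. The paper instead handles $\eta<1$ by an asymptotic triangle-geometry argument: it compares the minimizing segment to the vertical projection segment via the law of cosines, using that the tangent slope of $\Gamma_c$ tends to $0$ at the origin while the normal direction of the minimizing segment becomes vertical, concluding $d \sim d'$; it then treats $\eta>1$ by swapping coordinates and $\eta=1$ by explicit computation. Your argument avoids the $o(\cdot)$ bookkeeping and the separate reduction step, at the cost of slightly less geometric insight into why the vertical projection is nearly optimal; both yield constants depending on $\eta$, $c$, and $\varepsilon$ in the same way. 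The only points to tidy up in a final write-up are minor: the symmetry remark about the sign of $y_*$ is not actually needed since $\bigl||y_0|-|y_*|\bigr| \le |y_0-y_*|$ holds regardless, and the Lipschitz constants should be stated on the slightly enlarged interval $[0,3\varepsilon]$ (or $[0,3c^{-1}\varepsilon]$) where $x_*$ and $|y_*|$ actually live.
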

	\begin{figure}[h]
		\centering
		\begin{tikzpicture}
			[
			declare function={
				a=1.5;
			}]
			\draw[domain=0:a,samples=100,variable=\x,thick] plot ({\x},{\x^2});
			\draw[domain=0:a,samples=100,variable=\x,thick] plot ({\x},{-\x^2});
			\draw[->, thick] (0,-2*a)--(0,2*a);
			\draw[->, thick] (0,0)--(a*a,0);
			\filldraw[black] (1.2,.5) circle (2pt) node[anchor=west]{$z_0$};
			\filldraw[black] (.9,.81) circle (2pt);
			\node[anchor=east] at (.707,1) {$z_1$}; 
			\filldraw[black] (1.2,1.44) circle (2pt) node[anchor=west]{$z_0'$};
		\end{tikzpicture}
		\caption{The curve $x=|y|^\eta$ with $\eta<1$}
	\end{figure}
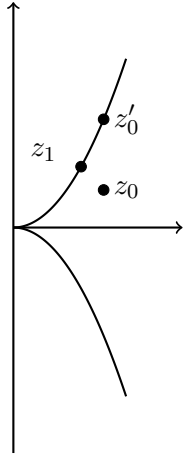

\begin{proof}
\emph{Case $\eta < 1$.}
Let $z_0 = (x_0, y_0) \in [0, + \infty) \times [0, + \infty)$ (by symmetry we can consider $y_0 \geq 0$). Let $z_1 = (x_1, y_1)$ be the point on $x=c|y|^\eta$ closest to $(x_0, y_0)$ and denote by $z_0' = (x_0, (c^{-1} x_0)^{1/\eta})$ the vertical projection of $z_0$ onto $\Gamma_c$.
In the triangle with vertices $z_0, z_0', z_1$, let us introduce the sidelengths
\begin{equation*}
d := |z_1 - z_0|,
	\qquad
d' := |z_0' - z_0|,
	\qquad \textrm{and} \qquad
\ell := |z_1 - z_0'| .
\end{equation*}
Our goal is to prove that $d' \sim d$ for $\e$ small enough.
The slope of the tangent to $\Gamma_c$ at a point $z = (x, (c^{-1}x)^{\frac{1}{\eta}})$ is equal to $\frac{c^{-1/\eta}}{\eta} x^{\frac{1}{\eta}-1}$. Note that as $z_0 \ra 0$, we have $z_1 \ra 0$ and $z_0' \ra 0$. Therefore, since $\eta<1$, the slope at $z_1$ and $z_0'$ tends to zero as $z_0 \to 0$, and so does % Since the function $x \mapsto x^{1/\eta}$ is convex ($\eta < 1$), 
the slope of the line spanned by $z_1 - z_0'$.
%tends to zero too. 
Hence,
\begin{equation} \label{e:slope1}
(z_1 - z_0') \cdot e_2
	= o(|z_1 - z_0'|)
	= o(\ell) .
\end{equation}
Since $z_1$ minimizes the distance from $z_0$ to $\Gamma_c$, $z_1-z_0$ is perpendicular to the tangent line of $\Gamma_c$ at $z_1$. Thus the slope of the line spanned by $z_1 - z_0$ tends to infinity as $z_0 \to 0$. Therefore we have
\begin{equation} \label{e:slope2}
\left|(z_1 - z_0) \cdot e_2\right|
	= |z_1 - z_0| + o(|z_1 - z_0|)
	= d + o(d) .
\end{equation}
Multiplying~\eqref{e:slope1} and~\eqref{e:slope2} by $d' = |z_0 - z_0'|$, since $z_0-z_0'$ is parallel to $e_2$, we deduce that
\begin{equation*}
\left\{
\begin{aligned}
\left|(z_1 - z_0') \cdot (z_0' - z_0) \right|
	&= o(\ell d') , \\
\left|(z_1 - z_0) \cdot (z_0 - z_0')\right|
	&= d d' + o(d d') .
\end{aligned}
\right.
\end{equation*}
Notice that, in the second equation, we have $(z_1 - z_0) \cdot (z_0 - z_0') \le 0$, since $y_1 - y_0$ and $y_0 - y_0'$ always have opposite signs, regardless of whether $z_0$ is to the right or to the left of $\Gamma_c$. Computing directly and applying the above equations we have
\begin{equation*}
\left\{
\begin{aligned}
d^2
	&= |z_1 - z_0|^2
	= |z_1 - z_0'|^2 + |z_0' - z_0|^2 + 2 (z_1 - z_0') \cdot (z_0' - z_0)
	= \ell^2 + d'^2 + o(\ell d') , \\
\ell^2
	&= |z_1 - z_0'|^2
	= |z_1 - z_0|^2 + |z_0 - z_0'|^2 + 2 (z_1 - z_0) \cdot (z_0 - z_0')
	= d^2 + d'^2 - 2 d d' + o(d d') .
\end{aligned}
\right.
\end{equation*}
Rearranging the first equation and then combining it with the second, we obtain
\begin{equation*}
d^2 - d'^2
	= \ell^2 + o(\ell d')
	= d^2 + d'^2 - 2dd' + o\left( \ell d' + d d' \right).
\end{equation*}
Rearranging again
\begin{equation*}
o\left( (\ell + d) d' \right)
	= d^2 - d'^2 - d^2 -d'^2 +2dd'
	= (d - d') 2 d' .
\end{equation*}
If $d' = 0$, the situation is trivial since then $z_0 = z_0' = z_1 \in \Gamma_c$. Otherwise, we have
\begin{equation*}
d - d'
	= o(\ell + d) .
\end{equation*}
Since $\ell \le d + d' \le 2 \max\{d, d'\}$, we conclude that $d \sim d'$ as $z_0 \ra 0$, which is the desired result for $\eta \in (0, 1)$.

\medskip

\emph{Case $\eta > 1$.}
For $\eta \in (1, + \infty)$, once again, we can assume without loss of generality that $y_0 \ge 0$ and that the distance $\dist(z_0, \Gamma_c)$ is achieved at a point $z_1 \in \Gamma_c \cap \{y \ge 0\}$, by symmetry with respect to $\{y = 0\}$. We reduce to the previous case by applying the isometry $T : (x, y) \mapsto (y, x)$ on $\Rb^2$: we have $T(\Gamma_c \cap \{y \ge 0\}) = \{x = c' |y|^{\eta'}\} \cap \{y \ge 0\}$ with $\eta' := 1/\eta < 1$ and $c'=c^{-1/\eta}$. Applying the previous case to $T(z_0) = (y_0, x_0)$ in place of $(x_0, y_0)$, we obtain
\begin{equation*}
\dist(z_0, \Gamma_c)
	= \dist\left(T(z_0), T(\Gamma_c)\right)
	\asymp \left| x_0 - \left(\frac{1}{c'} y_0\right)^{1/\eta'} \right|
	= \left| x_0 - c y_0^\eta \right| ,
\end{equation*}
dividing by $c$ proves the sought result for $\eta \in (1, + \infty)$.

\medskip

\emph{Case $\eta = 1$.}
For $\eta = 1$, we can compute the distance explicitly, for any $c > 0$: assuming $y_0 \ge 0$ again, the distance is achieved at $z_1 = (x_1, y_1) = (c y_1, y_1)$ such that
\begin{equation*}
0
	= \begin{pmatrix} x_0 - c y_1 \\ y_0 - y_1 \end{pmatrix} \cdot \begin{pmatrix} c \\ 1 \end{pmatrix}
	= y_1 \left( c x_0 + y_0 - (1 + c^2) y_1 \right) .
\end{equation*}
Therefore, by direct calculation,
%\begin{equation*}
%\begin{pmatrix} x_1 \\ y_1 \end{pmatrix}
%	= \begin{pmatrix} c \frac{c x_0 + y_0}{1 + c^2} \\ \frac{c x_0 + y_0}{1 + c^2} \end{pmatrix}
%	= \begin{pmatrix} x_0 \\ y_0 \end{pmatrix} + \begin{pmatrix} \frac{c y_0 - x_0}{1 + c^2} \\ \frac{c x_0 - c^2 y_0}{1 + c^2} \end{pmatrix}
%\end{equation*}
%Finally, we obtain
%\begin{equation*}
%|z_1 - z_0|^2
%	= \dfrac{1 + c^2}{(1 + c^2)^2} \left( c y_0 - x_0 \right)^2 ,
%\end{equation*}
%hence we conclude that
\begin{equation*}
\dist(z_0, \Gamma_c)
	= \dfrac{|c y_0 - x_0|}{\sqrt{1 + c^2}} , 
		\qquad \forall z_0 \in \{x \ge 0\} ,
\end{equation*}
which finishes the proof.
\end{proof}

%%%%% proof that there is a finite number of trapped directions

We now prove that the number of geodesics trapped outside $\omega$ is finite.

\begin{proof}[Proof of Lemma~\ref{l:finite-trapped}]
Geodesics with irrational direction are dense, so they intersect $\omega$. Let $v \in \Sb^1$ be a rational direction. Then $v = (p, q)/T_v$ for some coprime integers $p$ and $q$, where $T_v = (p^2 + q^2)^{1/2}$ is the period of any geodesic $L_{z, v}$, $z \in \Tb^2$. Let $a, b$ be B\'ezout coefficients, such that $ap + bq = 1$. Then we have
\begin{align*}
\dfrac{1}{T_v^2} \begin{pmatrix} -q \\ p \end{pmatrix}
	- \dfrac{bp - aq}{T_v^2} \begin{pmatrix} p \\ q \end{pmatrix}
		%&= \dfrac{1}{T_v^2} \begin{pmatrix} -q (a p + bq) - p (bp - aq) \\ p (a p + bq) - q (bp - aq) \end{pmatrix}
		%= \dfrac{1}{T_v^2} \begin{pmatrix} -b (p^2 + q^2) \\ a (p^2 + q^2) \end{pmatrix} \\
		&= \begin{pmatrix} -b \\ a \end{pmatrix}
			\in \Zb^2 .
\end{align*}
We deduce that for any $z \in \Tb^2$, we have
\begin{equation} \label{e:transl-geod}
z + \dfrac{1}{T_v^2} \begin{pmatrix} -q \\ p \end{pmatrix}
	\equiv z + \dfrac{bp - aq}{T_v} v \pmod{\Zb^2}
	\in L_{z, v} .
\end{equation}
The vector $(-q, p)/T_v^2$ is orthogonal to $v$ and has norm $1/T_v$. If $1/T_v < 2 \varepsilon$, \eqref{e:transl-geod} implies that any geodesic of the form $L_{z, v}$ must intersect any ball of radius $\varepsilon$. Therefore, geodesics not entering $\omega$ must have length $T_v \le 1/2 \varepsilon$, hence $|p|, |q| \le 1/2 \varepsilon$, and the result follows.
\end{proof}

	\bibliographystyle{alpha}
	\bibliography{mybib}

\end{document}